\newtheorem{theorem}{Theorem}[section]
\newtheorem{lemma}[theorem]{Lemma}
\newtheorem{proposition}[theorem]{Proposition}
\newtheorem{corollary}[theorem]{Corollary}
\newtheorem*{remark*}{Remark}
\DeclarePairedDelimiter\abs{\lvert}{\rvert}%
\DeclarePairedDelimiter\norm{\lVert}{\rVert}%
\newcommand{\deb}{\rightharpoonup}
\newcommand{\sgn}{\text{sgn}}
\newcommand{\R}{\mathbb{R}}
\newcommand{\G}{\mathcal{G}}
\newcommand{\nlc}[3]{\norm{#1}_{L^{#2}(#3)}^{#2}}
\newcommand{\F}{\mathcal{F}}
\newcommand{\N}{\mathbb{N}}
\newcommand{\Eps}{\mathcal{E}}
\newcommand{\I}{\mathcal{I}}
\newcommand{\HH}{\mathcal{H}}
\newcommand{\HmuS}{H_\mu^1(S_N)}
\newcommand{\sech}{\text{sech}}
\newcommand{\f}[2]{\frac{#1}{#2}}
\newcommand{\uLp}{\|u\|_p}
\newcommand\vv{\textsc{v}}
\newcommand{\M}{\mathcal{M}}
\begin{document}

\title[Competing nonlinearities in NLSE on star graphs]{Competing nonlinearities in NLS equations as source of threshold phenomena on star graphs}

\author[R. Adami]{Riccardo Adami$^1$}
\address{$^1$Politecnico di Torino, Dipartimento di Scienze Matematiche ``G.L. Lagrange'', Corso Duca degli Abruzzi, 24, 10129, Torino, Italy}
\email{riccardo.adami@polito.it}
\author[F. Boni]{Filippo Boni$^{2,3}$}
\address{$^2$Politecnico di Torino, Dipartimento di Scienze Matematiche ``G.L. Lagrange'', Corso Duca degli Abruzzi, 24, 10129, Torino, Italy}
\email{filippo.boni@polito.it}
\address{$^3$Universit\`a degli Studi di Torino, Dipartimento di Matematica ``G. Peano'', Via Carlo Alberto, 10, 10123, Torino, Italy}
\email{filippo.boni@unito.it}
\author[S. Dovetta]{Simone Dovetta$^4$}
\address{$^4$Istituto di Matematica Applicata e Tecnologie Informatiche "E. Magenes", Via Adolfo Ferrata, 1, 27100, Pavia, Italy}
\email{simone.dovetta@imati.cnr.it}

\date{\today}

\begin{abstract}
We investigate the existence of ground states for the nonlinear Schr\"odinger Equation on star graphs with two subcritical focusing nonlinear terms: a standard power nonlinearity, and a delta-type nonlinearity located at the vertex. We find that if the standard nonlinearity is stronger than the pointwise one, then ground states exist for small mass only. On the contrary, if the pointwise nonlinearity prevails, then ground states exist for large mass only. All ground states are radial, in the sense that their restriction to each half--line is always the same function, and coincides with a soliton tail.
Finally, if the two nonlinearities are of the same size, then the existence of ground states is insensitive to the value of the mass, and holds only on graphs with a small number of half--lines. 

Furthermore, we establish the orbital stability of the branch of radial stationary states to which the ground states belong, also in the mass regimes in which there is no ground state. 
\end{abstract}

\maketitle

\vspace{-.5cm}
{\footnotesize AMS Subject Classification: 35R02, 35Q55, 81Q35, 35Q40, 49J40.}
\smallskip

{\footnotesize Keywords: nonlinear Schr\"odinger, star graphs, nonlinear delta, standing waves, threshold phenomena}

\section{Introduction}

In this paper we investigate existence and uniqueness of ground states for the NLS energy functional
\begin{equation}
\label{Fpq1}
F_{p,q}(u)=\frac{1}{2}\int_{S_N}\abs{u'}^2\,dx-\frac{1}{p}\int_{S_N}\abs{u}^p\,dx-\frac{1}{q}\abs{u(0)}^q,
\end{equation}
with the mass constraint
\begin{equation}
	\label{mass}
	\int_{S_N}|u|^2\,dx=\mu\,,
\end{equation}
where $S_N$ is the star graph made of $N$ half--lines glued together at their common origin, which in the following will be denoted by $\vv$ or, alternatively, by $0$.

Each half--line $\mathcal{H}_i$ of $S_N$ is provided with a coordinate $x_i:\mathcal{H}_i\to[0,+\infty)$, so that $x_{i}=0$ corresponds to the origin of $\mathcal{H}_i$, for every $i=1,\,\dots,\,N$. A function $u$ on $S_N$ is given by the collection of its restrictions $u_i$ to each half--line $\mathcal{H}_i$. We then define $L^p(S_N)$ as the space of all functions $u$ on $S_N$ such that, for every $i$, $u_i\in L^p(\mathcal{H}_i)$ with respect to the standard Lebesgue measure on $\mathcal{H}_i$. We define
\[
\|u\|_{L^p(S_N)}^p:=\sum_{i=1}^N\|u_i\|_{L^p(\mathcal{H}_i)}^p\,.
\]
Similarly, we set $H^1(S_N)$ to be the space of all functions $u$ on $S_N$ that are continuous on the graph, in particular at $\vv$ , and such that $u_i\in H^1(\mathcal{H}_i)$ for every $i$, endowed with the norm
\[
\|u\|_{H^1(S_N)}^2:=\sum_{i=1}^N\|u_i\|_{H^1(\mathcal{H}_i)}^2\,.
\]
Introducing the notation
\[
\HmuS:=\big\{u\in H^1(S_N)\,:\,\|u\|_{L^2(S_N)}^2=\mu\big\}
\]
for the mass constrained space, and
\begin{equation}
\label{infFpq}
\F_{p,q}(\mu):=\inf_{v\in H^1_\mu(S_N)}F_{p,q}(v)
\end{equation}
for the ground state energy level of \eqref{Fpq1} in $\HmuS$, we define a ground state of \eqref{Fpq1} at mass $\mu$ as a minimizer of the energy among functions with mass $\mu$, i.e. $u\in\HmuS$ such that $F_{p,q}(u)=\F_{p,q}(\mu)$. 

The aim of the paper is thus to understand whether ground states exist and are unique in the $L^2$ subcritical regime for both nonlinearities, namely $p\in(2,6)$ and $q\in(2,4)$. 

\smallskip

Since their first appearance as a model for valence electrons in naphthalene molecules \cite{RS}, the study of dynamics on metric graphs (or networks) has grown through the decades to become a prominent line of research. To date, models involving Schr\"odinger operators have been gathering a considerable interest, both in the linear setting (see for instance \cite{BL,BKKM,EFK,kennedy,KKMM} as well as the monograph \cite{BK} and references therein) and in the nonlinear case \cite{abr,ADST,ASTcpde,AST,AST-CMP,BMP,CF,CDS,D-jde,D-nodea,DGMP,dst_aim,DSTjlms,GKP,KP-JDE,KP-JPA,N,NP,NPS,Pa,PS,PSV}. Recent investigations are now available also for the KdV equation \cite{MNS} and the Dirac equation \cite{BCT}.

Within this framework, star graphs provide a prototypical model. Particularly, the issue of the existence of NLS ground states on star graphs has been widely investigated for the last ten years. As a first step, nonexistence on star graphs made of at least three half-lines with free (or Kirchhoff's) conditions at the origin was established in \cite{acfn12}, so that non-trivial conditions are required in order to have existence. The effect of an attractive linear delta interaction at the vertex, that corresponds to $q=2$ in \eqref{Fpq1}, was then studied in \cite{acfn_jde,acfn_aihp}, finding that ground states exist for small values of the mass only, and they bifurcate from the bound state of the corresponding linear Schr\"odinger Equation. Such ground states are always radial, in the sense that their restriction to every half-line of the graph always coincide with the same function. Moreover, the stability of the family of radial stationary states was proved even for the values of the mass for which such functions are no longer ground states. Further stability analysis on star graphs with linear pointwise interaction at the origin have then been developed in \cite{angulo1,angulo2,goloshchapova}. In \cite{cfn16} the result of existence of a nonlinear ground state bifurcating from the linear one was extended to the presence of a further linear potential.

Here we introduce nonlinear vertex conditions, more specifically conditions mimicking the nonlinear delta potential introduced in \cite{at_jfa}, and recently studied for the same problem on the line \cite{BD}.

The concentrated nonlinearity is nowadays a widely accepted model of the net effect of the confinement of charges in small regions \cite{jona,malomed}, as well as in the study of resonant tunneling \cite{nier}. Related models have been originally discussed in dimension one \cite{AT,at_jfa} and three \cite{ADFT1,ADFT2}, and more recently the analysis has been broadened to the two--dimensional case \cite{ACCT,ACCT2,CCT} and to non--compact metric graphs \cite{DT,ST-JDE,ST-NA,T-JMAA}. On the other hand, starting with the seminal paper \cite{tao}, the study of the interplay between different nonlinearities for NLS equations has been recently carried out for instance in \cite{cheng,jeanvisc,killip,lecoz,miao1,miao2,soave1,soave2}, focusing on the case of two different power nonlinearities. Particularly, \cite{jeanvisc,soave1,soave2} are devoted to the problem of the existence, the shape and the stability of prescribed mass ground states, defined as minimizers of the energy among all stationary states at given mass (see also the series of works \cite{bartsch1,bartsch2,bartsch3,bartsch4} and \cite{NTV} for the case of NLS systems).

In the present paper, we describe the effect of the combined action of two focusing nonlinearities again of the power type, with a substantial difference with respect to the cited ones, namely the fact that the second nonlinearity is concentrated at a point: the vertex of the graph. 

The interaction between this two kinds of nonlinearity has already been explored in the case of the line, keeping the two powers at the subcritical or $L^2$-critical level \cite{BD}. A non trivial interplay emerges in the critical case, with a modification of the critical mass. For the graph model treated here, the emerging scenario is considerably richer and surprising, as the first of our main theorems starts unravelling.

\begin{theorem}
	\label{exoutdiag}
	Let $p\in(2,6),\,q\in(2,4)$ and $q\neq\frac{p}{2}+1$. Then there exists a critical mass $\mu_{p,q}>0$ such that
	\begin{itemize}
		\item[(i)] if $q<\frac{p}{2}+1$, then ground states of \eqref{Fpq1} at mass $\mu$ exist if and only if $\mu\le\mu_{p,q}$;
		\item[(ii)] if $q>\frac{p}{2}+1$, then ground states of \eqref{Fpq1} at mass $\mu$ exist if and only if $\mu\ge\mu_{p,q}$.
	\end{itemize}	
	Furthermore, whenever they exist, ground states at prescribed mass are unique and they are radial and decreasing on $S_N$, in the sense that their restriction to each half--line of the graph corresponds to the same decreasing function on $\R^+$.
\end{theorem}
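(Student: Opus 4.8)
The plan is to reduce the problem to a one-dimensional reduced variational problem over the class of radial functions, and then analyze the resulting scalar minimization via explicit soliton-type profiles.

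\textbf{Step 1: A priori symmetry of ground states.} First I would show that any ground state, if it exists, must be radial and decreasing on each half-line. The standard tool is rearrangement: given $u \in H^1_\mu(S_N)$, replace $u$ by the function $\hat u$ whose restriction to each half-line equals the decreasing rearrangement of the ``symmetrized'' mass, i.e. one pools all the half-lines together, rearranges, and redistributes equally. Since the two nonlinear terms $\int_{S_N}|u|^p\,dx$ and $|u(\vv)|^q$ can only increase under this operation (the pointwise term involves the supremum, which moves to the vertex), while the Dirichlet energy $\frac12\int_{S_N}|u'|^2\,dx$ does not increase (Pólya–Szegő on the graph), we get $F_{p,q}(\hat u) \le F_{p,q}(u)$, and equality forces $u$ to already be radial and monotone. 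One must be a little careful here: the pointwise term $|u(\vv)|^q$ requires controlling the value at the vertex, but continuity of $H^1$ functions on the graph plus the fact that rearrangement onto $N$ equal branches pushes the maximum to $\vv$ handles this. I expect this step to be essentially the argument already used in \cite{acfn_jde} for the linear delta case, adapted to the extra power term.

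\textbf{Step 2: Reduction to a scalar problem and explicit profiles.} Having restricted to radial functions, a radial $u$ is determined by a single $\phi \in H^1(\R^+)$ with $\|u\|_{L^2(S_N)}^2 = N\|\phi\|_{L^2(\R^+)}^2$, and the energy becomes
\begin{equation*}
F_{p,q}(u) = \frac{N}{2}\int_0^\infty |\phi'|^2\,dx - \frac{N}{p}\int_0^\infty |\phi|^p\,dx - \frac{1}{q}|\phi(0)|^q.
\end{equation*}
The Euler–Lagrange equation on $\R^+$ is $-\phi'' = \phi^{p-1} + \lambda \phi$ in the interior, with the nonlinear Robin-type condition $N\phi'(0^+) = -|\phi(0)|^{q-2}\phi(0)$ encoding the vertex term, so decreasing solutions are truncated solitons $\phi(x) = \big(\frac{\lambda p}{2}\big)^{1/(p-2)}\!\big[\operatorname{sech}\!\big(\frac{(p-2)\sqrt\lambda}{2}(x+a)\big)\big]^{2/(p-2)}$ for a shift parameter $a$ fixed by the vertex condition. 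One then parametrizes the branch of such radial stationary states by $\lambda>0$ (or equivalently by the mass), computes the mass $\mu(\lambda)$ and the energy along the branch explicitly, and compares with the energy of the free Kirchhoff problem / the behavior as mass escapes to infinity along a single half-line.

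\textbf{Step 3: The threshold dichotomy via scaling.} The heart of the matter — and the main obstacle — is identifying the critical mass $\mu_{p,q}$ and showing the sharp ``if and only if''. The sign of $q - (\frac{p}{2}+1)$ governs the competition: under the scaling $\phi \mapsto \phi_\sigma = \sigma^{1/2}\phi(\sigma \cdot)$ preserving mass, the power term scales like $\sigma^{(p-2)/2}$ and the pointwise term like $\sigma^{(q-2)/2}$ (after accounting for $\phi(0)$ scaling as $\sigma^{1/2}$, the term $|\phi_\sigma(0)|^q = \sigma^{q/2}|\phi(0)|^q$, compared against the kinetic term $\sigma$). Balancing exponents shows the two nonlinearities are ``equally strong'' exactly at $q = \frac{p}{2}+1$. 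When $q < \frac p2+1$ the power nonlinearity dominates at large $\sigma$ (concentration), which is the regime already understood: ground states exist only for small mass, the threshold being where $\F_{p,q}(\mu)$ stops being attained; when $q > \frac p2+1$ the pointwise term dominates and the roles reverse — a Gagliardo–Nirenberg-type analysis shows coercivity of $F_{p,q}$ on $H^1_\mu$ fails for small $\mu$ but the infimum is attained for large $\mu$. The precise value $\mu_{p,q}$ and the strictness of the inequality should come from a monotonicity analysis of $\mu \mapsto \F_{p,q}(\mu)$ (or of the explicit energy along the radial branch), comparing it to the relevant threshold levels (for (i), the level $0$ or the half-line soliton level; for (ii), the level achievable by mass spreading to infinity); I expect subadditivity/strict-subadditivity arguments à la concentration-compactness to close the gap at $\mu = \mu_{p,q}$ itself.

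\textbf{Step 4: Uniqueness.} Finally, uniqueness at each admissible mass follows because every ground state is radial (Step 1) and every radial stationary state is an explicit truncated soliton determined by $(\lambda,a)$ (Step 2); one checks that the mass map along the branch is strictly monotone, so at most one $\lambda$ — hence one profile — realizes a given mass. This last point reduces to a calculus lemma on the explicit functions $\mu(\lambda)$, which should be routine.
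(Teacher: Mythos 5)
Your skeleton (symmetry by rearrangement, reduction to the radial branch of truncated solitons, a scaling heuristic singling out $q=\tfrac p2+1$, uniqueness from strict monotonicity of the mass along the branch) is the same as the paper's, but two steps contain genuine gaps. The first is in Step 1: on a star graph with $N\geq3$ half--lines, the $N$--fold radial rearrangement does \emph{not} in general decrease the Dirichlet energy. The P\'olya--Szeg\H{o} inequality for that rearrangement requires that almost every value in the range of $u$ be attained at least $N$ times; for a function concentrated on a single half--line this fails, and the radial rearrangement typically \emph{increases} the kinetic energy (for a decreasing profile on one edge it multiplies it by $N^2$). Indeed, if your claim were true as stated, the same argument applied with the vertex term dropped would produce radial decreasing ground states on the Kirchhoff star graph, contradicting the nonexistence result of \cite{acfn12}. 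The paper's symmetry argument is necessarily two--tiered: Lemma \ref{rearrang} shows the rearranged competitor is either radial decreasing or has a ``bump'' on one half--line, and Lemma \ref{propervar} excludes the latter for minimizers via a mass--transfer variation forcing the origin to be a local maximum.

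The second gap is in Step 3, where the sharp ``if and only if'' with a single threshold $\mu_{p,q}$ is asserted but not obtained. The correct dichotomy level for compactness is the \emph{full--line} soliton energy $\Eps(\mu)=-\theta_p\mu^{2\beta+1}$ (not $0$, and not a half--line level): minimizing sequences can only lose compactness by escaping along one half--line, and by \eqref{Esn geq Er} such escape can at best reach $\Eps(\mu)$. Hence existence holds if and only if $F_{p,q}(\eta_0^{\omega(\mu)})\leq\Eps(\mu)$, i.e. if and only if $K(\mu):=\F^{\text{rad}}_{p,q}(\mu)/\mu^{2\beta+1}\leq-\theta_p$. The single--threshold structure then follows from the strict monotonicity of $K$, which the paper derives from $(\F^{\text{rad}}_{p,q})'(\mu)=-\tfrac12\omega(\mu)$ together with \eqref{Fpqstationary}, yielding $K'(\mu)=\tfrac{p+2-2q}{q(6-p)}\,|\eta_0^{\omega(\mu)}(0)|^q\,\mu^{-2\beta-2}$, whose sign is exactly that of $\tfrac p2+1-q$; your appeal to ``subadditivity \`a la concentration--compactness'' would at best give existence on some set of masses, not an interval with a sharp endpoint. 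Two further inaccuracies: coercivity of $F_{p,q}$ on $\HmuS$ never fails in the subcritical range (the issue is attainment below $\Eps(\mu)$, not boundedness from below), and the non--existence halves need a concrete argument, namely the bound $E(\eta_0^{\omega(\mu)},S_N)\geq-\theta_p(2/N)^{2\beta}\mu^{2\beta+1}>\Eps(\mu)$ for $N\geq3$ combined with the fact that $|\eta_0^{\omega(\mu)}(0)|^q\sim\mu^{2q/(6-p)}$ is of lower order than $\mu^{2\beta+1}$ in the relevant mass regime.
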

This result highlights the emergence of a natural comparison between the strength of the two nonlinearities. If $q < p/2 + 1$, then the point interaction is weak, so that it does not change the qualitative information gained in \cite{acfn_aihp}, that ground states exist only for masses below a critical value. Conversely, if $q > p/2 +1$, then the point interaction is strong enough to reverse the result, so that ground states exist only for masses above a critical value.

The threshold phenomena in Theorem \ref{exoutdiag} reveals a natural scaling for the doubly nonlinear problem on graphs. Heuristically, one may interpret such a feature as the result of the competition between the two terms defining $F_{p,q}$: the standard NLS energy on the one side
\begin{equation}
\label{energy_std}
\f12\int_\G|u'|^2\,dx-\f1p\int_\G|u|^p\,dx\,,
\end{equation}
and the delta nonlinearity on the other side
\[
-\f1q|u(0)|^q\,.
\]
It is well--known (see for instance \cite{ASTcpde}) that the unique minimizer of the standard NLS energy at mass $\mu$ on the real line, the so--called soliton $\phi$ as in \eqref{phi mu} below, verifies
\[
\f12\int_\R|\phi'|^2\,dx-\f1p\int_\R|\phi|^p\,dx\sim \mu^{2\beta+1}, \qquad\beta=\f{p-2}{6-p}\,,
\]
whereas
\[
\|\phi\|_{L^\infty(\R)}\sim \mu^\alpha,\qquad\alpha=\f2{6-p}\,.
\]
An elementary calculation then shows that
\[
\alpha q=2\beta+1\quad\Longleftrightarrow\quad q=\f p2+1\,,
\]
so that the comparison between the standard and the pointwise nonlinearity reported in Theorem \ref{exoutdiag} corresponds to the balance between the standard NLS energy and the maximal delta nonlinearity for the soliton on the line. However, at a first sight, the detailed result in Theorem \ref{exoutdiag}, with that unprecedented dependence of the existence of ground states on the mass, sounds puzzling and requires a qualitative explanation. As well understood since \cite{acfn12}, fixed a value for the mass, the existence of ground states for the NLSE on metric graphs is ruled by the competition between the soliton on the line and the standing wave with the lowest energy among all stationary states. Indeed, even though the presence of the vertex together with the prescription of continuity exclude the solitons from the family of possible competitors, one can always approximate its standard energy \eqref{energy_std} arbitrarily well, for instance by a sequence of solitons supported on one half--line and truncated near the vertex. The approximation is made better and better by moving faraway from the vertex. This reasoning shows that, if the lowest-energy standing wave wins the competition, namely, if its energy is lower than the standard energy of the soliton on the line \eqref{energy_std}, then the ground state exists. Viceversa, if the energy of the soliton is lower than the energy of every stationary state, then the ground state does not exist. This is nowadays well understood. 

Of course, the outcome of the competition between solitons and standing waves can depend on the mass, and this is the new phenomenon put in evidence, in a surprising way, by Theorem 1.1. We recall that such a phenomenon is not present in the analogous model on the line, since in that case the soliton is an admissible competitor and its doubly nonlinear energy is strictly smaller than the standard one \eqref{energy_std}. As a consequence, for the problem on the line a ground state exists for every value of the mass and is centred at the origin \cite{BD}. On the contrary, in Theorem 1.1 a dramatic dependence on the mass emerges as the graph structure plays a crucial role. Indeed, in order to lower the energy, from one side it is convenient to exploit the presence of the interaction at the vertex, but from the other it is convenient to escape it, as the proximity to the vertex increases the kinetic energy (quantitative estimates of this effect can be found by using rearrangement theory, see  \cite[Proposition 3.1]{ASTcpde}). By this observation, it becomes possible to interpret the role of the mass in Theorem 1.1: if $q > p/2 + 1$, then for large masses, and consequently large values at the vertex of the radial standing waves, the point interaction prevails and a ground state exists. On the other hand, for small mass, the high power on a small value at the vertex makes the contribution of the point interaction tiny, so that the standard nonlinearity prevails, escaping the vertex becomes convenient, and then there is no ground state. Viceversa, in the case of weaker delta interactions, namely $q < p/2 + 1$, for small mass, and then small value at the vertex, the lower power in the pointwise term gives to the interaction a larger size than that of the standard nonlinearity, so it is convenient to stay on the vertex and a ground state exists. On the other hand, if the mass is large, then the effect of the standard nonlinearity is magnified more, so that it is convenient to escape the vertex and there is no ground state. Of course, this description is qualitative, but is made rigorous by the computations in Section \ref{sec:main}.

It remains to investigate what happens at the threshold $q = p/2 +1$, and the answer is given by the following theorem, bearing another unexpected feature: ground states exist only for a small number of half-lines, and, notably, such a number does not depend on the mass.

\begin{theorem}
	\label{exdiag}
	Let $p\in(2,6)$ and $q=\frac{p}{2}+1$. Then there exists a critical number of half--lines $N_p\ge 2$ such that 
	\begin{itemize}
		\item[(i)]if $N\le N_p$, then  ground states of \eqref{Fpq1} at mass $\mu$ exist for every $\mu$;
		\item[(ii)]if $N>N_p$, then ground state of \eqref{Fpq1} at mass $\mu$ never exist.
	\end{itemize}
		Furthermore, whenever they exist, ground states at prescribed mass are unique and they are radial and decreasing on $S_N$, in the sense that their restriction to each half--line of the graph corresponds to the same decreasing function on $\R^+$.
\end{theorem}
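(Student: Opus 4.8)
The plan is to reduce the problem to the half--line, exploit the scale invariance forced by $q=\f p2+1$ to remove the dependence on the mass, and then read the dependence on $N$ off the explicit soliton profile.

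I would start from two facts that are shared with the proof of Theorem~\ref{exoutdiag} and do not involve the relation between $p$ and $q$. First, a standard concentration--compactness argument gives the dichotomy: a ground state at mass $\mu$ exists if and only if $\F_{p,q}(\mu)<\Eps_{\mathrm{std}}(\mu)$, where $\Eps_{\mathrm{std}}(\mu)<0$ is the energy of the soliton of mass $\mu$ on $\R$, which always satisfies $\F_{p,q}(\mu)\le\Eps_{\mathrm{std}}(\mu)$ (place a soliton of mass $\mu$ on one half--line, truncate it near $\vv$, and let its centre run to $+\infty$). Second, when a ground state exists it is, after replacing $u$ by $|u|$, a nonnegative stationary state, so its restriction to each half--line is an $H^1(\R^+)$ solution of $-\psi''+\omega\psi=\psi^{p-1}$ on $(0,+\infty)$ with the nonlinear vertex condition $N\psi'(0^+)=-\psi(0)^{q-1}$; continuity at $\vv$ and the minimality of $u$ then force each restriction to be the \emph{same} decreasing soliton tail $\phi_\omega(\cdot+a)$, where $a>0$ is the unique solution of $-\phi_\omega'(a)/\phi_\omega(a)=\tfrac1N\phi_\omega(a)^{q-2}$ (the left side increases in $a$ from $0$, the right side decreases). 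Hence ground states are radial and decreasing, and there is exactly one radial stationary state at each mass.

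The distinctive feature of $q=\f p2+1$, equivalently $\alpha q=2\beta+1$ in the notation of the Introduction, is that the soliton rescaling $u\mapsto u_\lambda:=\lambda^{1/2}u(\lambda\,\cdot)$ multiplies all three terms of $F_{p,q}$ by the same power: $F_{p,q}(u_\lambda)=\lambda^{\f{p+2}{p-2}}F_{p,q}(u)$ and $\|u_\lambda\|_{L^2(S_N)}^2=\lambda^{\f{6-p}{p-2}}\|u\|_{L^2(S_N)}^2$. Therefore both $\F_{p,q}(\mu)$ and $\Eps_{\mathrm{std}}(\mu)$ equal $\mu^{2\beta+1}$ times a constant independent of $\mu$, so the criterion $\F_{p,q}(\mu)<\Eps_{\mathrm{std}}(\mu)$ is insensitive to $\mu$: for each fixed $N$, ground states exist for all masses or for none. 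To decide which, by scaling I may restrict to the radial stationary state $u_N$ at frequency $\omega=1$, with energy $e_N$ and mass $\mu_N$, set $\rho(N):=e_N\,\mu_N^{-(2\beta+1)}$, and observe that the rescaling of $u_N$ to mass $\mu$ has energy $\rho(N)\mu^{2\beta+1}$; combining this with the dichotomy and with the uniqueness of the radial state, a ground state on $S_N$ exists (for all masses) if and only if $\rho(N)\le\Eps_{\mathrm{std}}(1)$ (the borderline case included, since there $u_N$ itself realizes the infimum).

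Finally I would examine the two ends of the range of $N$. For $N=2$ one has $\rho(2)<\Eps_{\mathrm{std}}(1)$: since $S_2=\R$, the soliton $\phi_\mu$ centred at $\vv$ is an admissible competitor with $F_{p,q}(\phi_\mu)=\Eps_{\mathrm{std}}(\mu)-\f1q|\phi_\mu(0)|^q<\Eps_{\mathrm{std}}(\mu)$, whence $\F_{p,q}(\mu)<\Eps_{\mathrm{std}}(\mu)$. For $N\to\infty$ the vertex condition gives $a\to0$, so $e_N$ and $\mu_N$ both grow linearly in $N$ and $\rho(N)\sim\text{const}\cdot N^{-2\beta}\to0^-$; since $\beta>0$ and $\Eps_{\mathrm{std}}(1)<0$, this forces $\rho(N)>\Eps_{\mathrm{std}}(1)$ for all large $N$. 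The crux of the proof, and the step I expect to be the main obstacle, is to show that $\rho$ is increasing in $N$; this seems to require differentiating $e_N$ and $\mu_N$ along the soliton orbit, via the soliton ODE and the vertex condition, to control the sign of $\rho'$. Granted monotonicity, $\{N\ge2:\rho(N)\le\Eps_{\mathrm{std}}(1)\}$ is a finite initial segment $\{2,\dots,N_p\}$ with $N_p\ge2$, which is exactly the asserted dichotomy; the uniqueness of ground states at prescribed mass and their soliton--tail shape then follow at once from the first step, since ground states are radial and decreasing and the radial stationary state at a given mass is unique.
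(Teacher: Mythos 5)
Your overall strategy coincides with the paper's: compare the unique radial stationary state with the soliton energy $\Eps(\mu)$, use $q=\f p2+1$ to make that comparison independent of $\mu$, verify existence at $N=2$ and non-existence for large $N$. Your route to mass-independence (the scaling that multiplies all three terms of $F_{p,q}$ by the same power $\lambda^{\f{p+2}{p-2}}$ while the mass scales by $\lambda^{\f{6-p}{p-2}}$) is a clean alternative to the paper's explicit computation, which instead solves \eqref{tN} to get $t=\sqrt{p/(p+2N^2)}$ independently of $\omega$ and reduces everything to the explicit criterion \eqref{condcrit}. (Minor slip: the correct scaling is $u_\lambda=\lambda^{2/(p-2)}u(\lambda\cdot)$, not $\lambda^{1/2}u(\lambda\cdot)$; with $\lambda^{1/2}$ the three energy terms do not scale alike. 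The homogeneities you then state are the ones produced by the exponent $2/(p-2)$.)

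There is, however, one genuine gap, and you have flagged it yourself without closing it: the monotonicity of $\rho(N)=e_N\mu_N^{-(2\beta+1)}$ in $N$. Without it, your argument only shows that the set of admissible $N$ contains $2$ and excludes all sufficiently large $N$; it does not exclude that this set has holes, so the clean threshold $N_p$ is not established. This step cannot be waved away as routine differentiation ``along the soliton orbit'': it is precisely the quantitative heart of the theorem. The paper handles it by writing the existence condition as $N\,\I\bigl(\sqrt{p/(p+2N^2)}\bigr)\le 2\,\I(0)$ and checking directly that
\[
h(x)=x\int_{\sqrt{\f{p}{p+2x^2}}}^1(1-s^2)^{\f{4-p}{p-2}}\,ds
\]
has
\[
h'(x)=\int_{\sqrt{\f{p}{p+2x^2}}}^1(1-s^2)^{\f{4-p}{p-2}}\,ds+\frac{2^{\f{2}{p-2}}\sqrt{p}\,x^{\f{4}{p-2}}}{(p+2x^2)^{\f{p+2}{2(p-2)}}}>0\,,
\]
where the key point is that the boundary term produced by the $N$-dependence of the lower limit of integration enters with a favourable sign. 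To complete your proof you must either reproduce this computation (your $\rho(N)\le\Eps(1)$ criterion is equivalent to \eqref{condcrit} after the explicit evaluation of $e_N$ and $\mu_N$ via the virial identities \eqref{Fpqstationary}) or supply an independent proof that $\rho$ is non-decreasing. Everything else in your outline — the existence dichotomy, the characterization of ground states as the radial decreasing state, the $N=2$ competitor argument, and the $N\to\infty$ asymptotics $\rho(N)\sim cN^{-2\beta}\to0^-$ — is correct and matches the paper.
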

The number $N_p$ of half-lines above which there is never a ground state is not presently known, but on the basis of numerical simulations (see Section \ref{sec:main}) we have evidence that it is at least three for every value of $p$. However, at the time being we can prove analytically only the following result.

\begin{proposition}
	\label{prop_Np}
	For every $p\in(2,6)$, let $N_p$ be as in Theorem \ref{exdiag}. Then:
	\begin{itemize}
		\item[(i)] $\lim_{p\to2^+}N_p=+\infty$;
		\item[(ii)] if $p\geq4$, then $N_p\in\{2,3,4\}$. Furthermore, there exists $\delta>0$ so that $N_p=3$ for every $p\in(4-\delta,4+\delta)\cup(6-\delta,6)$.
	\end{itemize}
\end{proposition}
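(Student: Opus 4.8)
The plan is to extract, from the analysis performed in Section~\ref{sec:main} to prove Theorem~\ref{exdiag}, a single scalar inequality ruling the existence of ground states at the threshold $q=\f p2+1$, and then to study it in the three regimes $p\to2^+$, $p$ near $4$, and $p$ near $6$. Recall that for $q=\f p2+1$ the radial standing wave at mass $\mu$ is \emph{unique}, with energy $-\f{(6-p)}{2(p+2)}\,\omega\mu$ where $\omega$ is its frequency, while the soliton energy level on the line at mass $\mu$ equals $-\f{(6-p)}{2(p+2)}\,\omega_*\mu$, $\omega_*$ being the frequency of the mass--$\mu$ soliton. A ground state exists exactly when the former does not exceed the latter, i.e.\ when $\omega\ge\omega_*$; expressing both frequencies through $\mu$ and the soliton profile, this reduces to the \emph{mass--independent} condition
\[
\mathcal{R}_N(p):=\f{\displaystyle\int_0^{r_N}r^{\frac{2}{p-2}-1}(1-r)^{-\frac12}\,dr}{\displaystyle\int_0^{1}r^{\frac{2}{p-2}-1}(1-r)^{-\frac12}\,dr}\ \le\ \f2N\,,\qquad r_N:=\f{2N^2}{p+2N^2}\in(0,1)\,.
\]
Now $\mathcal{R}_N(p)$ is the value at $r_N$ of the cumulative distribution function of a $\mathrm{Beta}\!\left(\tfrac{2}{p-2},\tfrac12\right)$ law, hence strictly increasing in $r_N$ and strictly decreasing in $\tfrac{2}{p-2}$; since $r_N$ increases in $N$ and $\tfrac2N$ decreases, $\{N\ge2:\mathcal{R}_N(p)\le\tfrac2N\}$ is an interval $\{2,\dots,N_p\}$, which identifies the $N_p$ of Theorem~\ref{exdiag} (in accordance with $N_p\ge2$, one has $\mathcal{R}_2(p)<1$). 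The Proposition thus becomes a statement on $\mathcal{R}_N(p)$.

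For (i), fix $N$ and let $p\to2^+$. Then $\tfrac{2}{p-2}\to+\infty$ while $r_N\to\tfrac{N^2}{1+N^2}<1$, so the $\mathrm{Beta}\!\left(\tfrac{2}{p-2},\tfrac12\right)$ mass escapes towards $1$ and $\mathcal{R}_N(p)\to0$ (in fact it is exponentially small in $\tfrac1{p-2}$). Hence $\mathcal{R}_N(p)<\tfrac2N$ for $p$ close enough to $2$, i.e.\ $N_p\ge N$ there; as $N$ is arbitrary, $\lim_{p\to2^+}N_p=+\infty$.

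For (ii), assume $p\ge4$, so $\tfrac{2}{p-2}\in(\tfrac12,1]$. Nonexistence for $N\ge5$: on $[4,6)$ one has $r_5=\f{50}{p+50}>\tfrac{25}{28}$ and $\tfrac{2}{p-2}\le1$, so the two monotonicities of $\mathcal{R}_5$ give $\mathcal{R}_5(p)\ge1-\sqrt{\tfrac{3}{28}}>\tfrac25$ (the right-hand side being the value of the function at $r=\tfrac{25}{28}$ and exponent parameter $1$), and by monotonicity in $N$ the inequality $\mathcal{R}_N(p)\le\tfrac2N$ fails for all $N\ge5$; thus $N_p\le4$, and with $N_p\ge2$ from Theorem~\ref{exdiag} we get $N_p\in\{2,3,4\}$. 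Near $p=4$: the exponent vanishes and $\mathcal{R}_N(4)=1-\sqrt{1-r_N}$, so $\mathcal{R}_3(4)=1-\sqrt{2/11}<\tfrac23$ and $\mathcal{R}_4(4)=\tfrac23>\tfrac12$; both being strict, continuity of $p\mapsto\mathcal{R}_N(p)$ propagates $\mathcal{R}_3(p)<\tfrac23$, $\mathcal{R}_4(p)>\tfrac12$ to a neighbourhood of $4$, whence $N_p=3$ there. Near $p=6$: the exponent tends to $-\tfrac12$ and $\mathcal{R}_N(6^-)=\f2\pi\arcsin\sqrt{r_N}$, so $\mathcal{R}_4(6^-)=\f2\pi\arcsin\sqrt{16/19}>\tfrac12$, which is strict and hence stable, giving $N_p\le3$ near $6$. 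For the matching lower bound, however, $\mathcal{R}_3(6^-)=\f2\pi\arcsin\tfrac{\sqrt3}2=\tfrac23$ \emph{exactly}, so evaluating at the endpoint is not enough: I would instead show $\f{d}{dp}\mathcal{R}_3(p)\big|_{p=6^-}>0$, which forces $\mathcal{R}_3(p)<\tfrac23$ for $p$ slightly below $6$ and yields $N_p\ge3$. Hence $N_p=3$ near $6$.

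The main obstacle is exactly this borderline case ($N=3$, $p\to6^-$): since the governing inequality degenerates to an equality in the limit, the conclusion rests on the \emph{sign} of $\f{d}{dp}\mathcal{R}_3(p)$ at $p=6$, where the increase of $r_3$ as $p\downarrow6$ competes with the simultaneous increase of $\tfrac{2}{p-2}$. Making this rigorous requires computing the derivative of the incomplete Beta ratio with respect to its first parameter at $\big(\tfrac12,\tfrac12\big)$, which brings in a $\int_0^{\pi/3}\ln\sin\theta\,d\theta$-type (Clausen) constant, and checking that its contribution prevails over the $r_3$-term. Everything else is comparatively robust: (i) is a soft limit once the reduction is in place, and the nonexistence bound for $N\ge5$ uses only the monotonicity of $\mathcal{R}_N$ in its two arguments.
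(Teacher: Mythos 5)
Your reduction of the problem to the scalar condition $\mathcal{R}_N(p)\le 2/N$ is correct: it is exactly the paper's criterion \eqref{condcrit} rewritten via the substitution $r=1-s^2$, which turns $\I(t)/\I(0)$ into a normalized incomplete Beta integral with parameters $\big(\tfrac{2}{p-2},\tfrac12\big)$, and your three--regime analysis ($p\to2^+$, $p$ near $4$, $p$ near $6$) mirrors the paper's proof step by step. Two of your steps are in fact cleaner than the paper's: the soft concentration argument for \emph{(i)}, and especially the nonexistence for $N\geq5$, where the chain $\mathcal{R}_N(p)\geq\mathcal{R}_5(p)\geq 1-\sqrt{3/28}>\tfrac25\geq\tfrac2N$ bypasses the paper's quartic polynomial $G(N)$ entirely --- provided you justify the claimed monotonicity of the Beta ratio in its first parameter, a standard Chebyshev/stochastic--ordering fact that the paper replaces by direct pointwise bounds on the integrand.

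The one genuine gap is the borderline case $N=3$, $p\to6^-$, which you correctly identify as the crux but do not resolve. Since $\mathcal{R}_3(6)=\tfrac{2}{\pi}\arcsin\tfrac{\sqrt3}{2}=\tfrac23$ exactly, the conclusion $N_p=3$ on $(6-\delta,6)$ hinges entirely on the sign of $\tfrac{d}{dp}\mathcal{R}_3(p)$ at $p=6$, and this sign is \emph{not} determined by any of the monotonicity properties you establish: as $p$ decreases from $6$, the increase of $r_3$ pushes $\mathcal{R}_3$ up while the increase of $\tfrac{2}{p-2}$ pushes it down, so only a quantitative computation can decide which effect wins. You state that you ``would show'' the derivative is positive and correctly name the ingredients, but you never carry out the estimate, so this part of statement \emph{(ii)} remains unproved. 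The paper closes precisely this point by differentiating $R(p)=\I\big(\sqrt{p/(p+18)}\big)/\I(0)$ explicitly, using the closed form $\int_0^1(1-s^2)^{-1/2}\ln(1-s^2)\,ds=-\pi\ln 2$, bounding $\int_0^{1/2}(1-s^2)^{-1/2}\ln(1-s^2)\,ds$ from below by $\tfrac{1}{\sqrt3}\ln\tfrac34$, and verifying the resulting elementary numerical inequality $\tfrac{\pi\ln2}{3}>\tfrac{1}{2\sqrt3}-\tfrac{1}{\sqrt3}\ln\tfrac34$. A comparable explicit computation is needed to complete your argument.
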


To conclude, recall that to every critical point $u\in\HmuS$ of \eqref{Fpq1} subject to the mass constraint \eqref{mass} it corresponds a standing wave solution $\psi(x,t):=e^{i\omega t}u(x)$ to the time--dependent NLS equation on $S_N$
\begin{equation}
\label{time_nlse}
i\partial_t\psi(t,x)=-\partial_{xx}^2\psi(t,x)-|\psi(t,x)|^{p-2}\psi(t,x)-|\psi(t,x)|^{q-2}\delta_0\psi(t,x)\,.
\end{equation}
The standard stability theory in \cite{GSS} guarantees that $\psi$ is orbitally stable if and only if $u$ is a local minimizer of $F_{p,q}$ in $\HmuS$, so that Theorems \ref{exoutdiag}--\ref{exdiag} imply the existence of an orbitally stable standing wave of \eqref{time_nlse} whenever ground states exist.  Adapting the argument originally developed in \cite{acfn_jde}, the last proposition of this paper improves this result, by proving the orbital stability for all elements of the branch of radial stationary states, parametrized by the mass. Theorems \ref{exoutdiag}--\ref{exdiag} establish that for some values of the mass there is no ground state. However, even for those values the branch of radial standing waves exists, and we prove that they are always orbitally stable.

\begin{proposition}
	\label{prop_stab}
	For every $p\in(2,6)$, $q\in(2,4)$ and every $\mu>0$, the unique radially--symmetric stationary state of \eqref{time_nlse} is a local minimizer of $F_{p,q}$ in $\HmuS$, and the associated standing wave is always orbitally stable.
\end{proposition}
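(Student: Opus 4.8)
We rely on the fact, recalled just before the statement, that by the theory of \cite{GSS} orbital stability of $e^{i\omega t}u$ is equivalent to $u$ being a local minimizer of $F_{p,q}$ on $\HmuS$; since local well--posedness of \eqref{time_nlse} in $H^1(S_N)$ and conservation of mass and energy are standard for subcritical $p$ and subcritical pointwise exponent $q$, it suffices to prove that, for every $\mu>0$, the unique radial stationary state $u_\mu$, with Lagrange multiplier $\omega=\omega(\mu)$, is a local minimizer of $F_{p,q}$ on $\HmuS$. When $\mu$ lies in the range covered by Theorem \ref{exoutdiag} or Theorem \ref{exdiag} this is automatic, because there $u_\mu$ coincides with the (radial) ground state; the content of the proposition is the complementary mass regime, where we follow the scheme of \cite{acfn_jde}.

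The first step is a reduction to radial competitors by symmetrisation. Given $u\in\HmuS$, let $u^*$ be the symmetric decreasing rearrangement of $|u|$ onto $S_N$, i.e.\ the radial function whose common half--line profile is the decreasing rearrangement of $u$ with respect to the measure summing over the $N$ edges. By \cite[Proposition 3.1]{ASTcpde} one has $\|u^*\|_{L^2(S_N)}=\|u\|_{L^2(S_N)}$, $\|u^*\|_{L^p(S_N)}=\|u\|_{L^p(S_N)}$ and $\|(u^*)'\|_{L^2(S_N)}\le\|u'\|_{L^2(S_N)}$, while $u^*(0)=\|u^*\|_{L^\infty(S_N)}=\|u\|_{L^\infty(S_N)}\ge|u(0)|$; since the pointwise term is focusing, this yields $F_{p,q}(u^*)\le F_{p,q}(u)$. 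If $u_\mu$ were not a local minimizer there would be $v_n\to u_\mu$ in $H^1(S_N)$ with $\|v_n\|_{L^2(S_N)}^2=\mu$ and $F_{p,q}(v_n)<F_{p,q}(u_\mu)$. Using that decreasing rearrangement is non--expansive in $L^\infty$ (argue via distribution functions) and that $H^1(S_N)\imm L^\infty(S_N)$, one gets $v_n^*\to u_\mu^*=u_\mu$ uniformly ($u_\mu$ being radial and decreasing), hence $\|v_n^*\|_{L^p(S_N)}\to\|u_\mu\|_{L^p(S_N)}$ and $v_n^*(0)\to u_\mu(0)$; the inequality $F_{p,q}(v_n^*)\le F_{p,q}(v_n)<F_{p,q}(u_\mu)$ then gives $\limsup_n\|(v_n^*)'\|_{L^2(S_N)}\le\|u_\mu'\|_{L^2(S_N)}$, which together with $v_n^*\deb u_\mu$ in $H^1(S_N)$ (the sequence being bounded there) upgrades to $v_n^*\to u_\mu$ in $H^1(S_N)$. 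This contradicts $u_\mu$ being a local minimizer \emph{among radial functions}, so it only remains to establish the latter.

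For the radial problem, writing a radial $u$ as $N$ copies of a profile $\varphi\in H^1(\R^+)$ with $\|\varphi\|_{L^2(\R^+)}^2=\mu/N$,
\[
F_{p,q}(u)=N\Big(\f12\int_0^{+\infty}|\varphi'|^2\,dx-\f1p\int_0^{+\infty}|\varphi|^p\,dx\Big)-\f1q|\varphi(0)|^q\,,
\]
so the radial problem is the half--line NLS energy with a focusing boundary delta nonlinearity, whose unique stationary state is the explicit truncated soliton identified in Section \ref{sec:main}, with a known mass--frequency relation $\mu=\mu(\omega)$. For this one--dimensional constrained problem I would run the standard argument: the linearised operators $L_-=-\partial_{xx}+\omega-|\varphi|^{p-2}$ and $L_+=-\partial_{xx}+\omega-(p-1)|\varphi|^{p-2}$, equipped with the linearised Robin--type vertex condition, satisfy $L_-\ge0$ with $\ker L_-=\mathrm{span}\{\varphi\}$ since $\varphi>0$; $L_+$ has exactly one negative eigenvalue (the Dirichlet realisation on $\R^+$ is nonnegative by the ground--state transform with the sign--definite zero--energy solution $\varphi'$, and passing to the Robin boundary condition is a rank--one form extension, so it adds at most one negative direction, which is present because $\langle L_+\varphi,\varphi\rangle<0$); and, by Weinstein's lemma, the projected second variation of $F_{p,q}+\tfrac\omega2\|\cdot\|_{L^2(S_N)}^2$ is coercive on the subspace $L^2$--orthogonal to $\varphi$ (modulo the phase direction) if and only if $\langle L_+^{-1}\varphi,\varphi\rangle<0$, which by differentiating the profile equation in $\omega$ equals $-\tfrac1{2N}\f{d}{d\omega}\mu(\omega)$; hence the conclusion follows from the strict monotonicity $\f{d}{d\omega}\mu(\omega)>0$, which is already a key ingredient in the proofs of Theorems \ref{exoutdiag}--\ref{exdiag}. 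Equivalently, as in \cite{acfn_jde}, one estimates the second variation directly after splitting a radial perturbation along $\varphi$ and its orthogonal complement, the only delicate term being controlled by the same slope condition.

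The main obstacle is precisely this final one--dimensional analysis: checking that the branch is non--degenerate ($\ker L_+=\{0\}$, so that $\varphi$ lies in the range of $L_+$ and Weinstein's lemma applies) and that the mass is strictly increasing along the radial branch, \emph{uniformly} in $N$ and in all admissible pairs $(p,q)$, including the threshold $q=\tfrac p2+1$ and the mass regimes in which no ground state exists. Because the profile is explicit, both reduce to concrete but somewhat involved computations with the truncated soliton and its $L^2$--norm; by contrast, the symmetrisation reduction of the second step is comparatively soft, the only point requiring care being the passage from $H^1$--convergence of the $v_n$ to $H^1$--convergence of the rearranged $v_n^*$.
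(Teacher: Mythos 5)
Your overall scheme (GSS reduction, then a local minimality argument) is the right frame, and the final one--dimensional ingredient you invoke, the slope condition $\f{d}{d\omega}\|\eta_0^\omega\|_2^2>0$, is indeed available (it is exactly what Proposition \ref{prop_stat} proves). However, the symmetrisation step that is supposed to dispose of all non--radial perturbations is wrong, and this is a fatal gap. The inequality $\|(u^*)'\|_{L^2(S_N)}\le\|u'\|_{L^2(S_N)}$ for the $N$--fold radial rearrangement onto $S_N$ is \emph{not} what \cite[Proposition 3.1]{ASTcpde} gives: the P\'olya--Szeg\H{o} inequality for a rearrangement in which every level is realised on all $N$ half--lines requires that almost every value in the range of $u$ be attained at least $N$ times, and for a generic $u$ (e.g.\ a bump supported on a single half--line, where $N(t)=2$) the radial rearrangement \emph{increases} the kinetic energy by a factor of order $N^2/4$ on the offending levels. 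In fact your claim $F_{p,q}(u^*)\le F_{p,q}(u)$ for all $u\in\HmuS$ cannot be repaired even in principle: combined with Proposition \ref{prop_radial} (the radial problem is always attained by $\eta_0^{\omega(\mu)}$) it would imply $\F_{p,q}(\mu)=\F_{p,q}^{\mathrm{rad}}(\mu)$ and hence existence of ground states for every $\mu$ and every $N$, contradicting Theorems \ref{exoutdiag}--\ref{exdiag}. A local version near $u_\mu$ does not obviously hold either, since arbitrarily small $H^1$--perturbations can create levels attained only twice. So the non--radial directions — in particular the exchange of mass between half--lines — are not controlled by your argument, and they are precisely the delicate part of the statement.

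The paper handles this differently: instead of symmetrising, it maps every competitor (continuously) onto a finite--dimensional manifold $\M$ of edge--wise soliton pieces via a transformation $\Sigma$ that preserves the mass of each restriction and the vertex value and decreases the energy (this decrease is legitimate because it is done edge by edge, with no rearrangement across edges). Local minimality then reduces to positive definiteness of the Hessian of a reduced energy $r(m_1,\dots,m_{N-1},h)$ at the radial configuration. The eigenvalue $\f{\partial^2 e}{\partial h^2}$ corresponds to your radial/Weinstein direction, but the eigenvalues $\f{\partial^2 e}{\partial m^2}$ govern exactly the mass--exchange perturbations your proof misses; their positivity is obtained by viewing two half--lines as a line and using the global minimality of the doubly nonlinear problem on $\R$ from \cite{BD} together with the quantitative GSS stability bound. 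If you want to salvage your spectral approach, you would still need a separate argument for these transverse directions — the radial half--line analysis alone cannot see them.
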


\medskip
The paper is organized as follows. Section \ref{sec:prel} reviews some preliminary results on nonlinear Schr\"odinger equations with the standard nonlinearity only, and it develops the analysis of the stationary states of the doubly nonlinear model on $S_N$. Within Section \ref{sec:ex_char} we derive a existence criterion and we show that, whenever they exist, ground states coincide with the unique radial and decreasing stationary state. Finally, Section \ref{sec:main} completes the proofs of Theorems \ref{exoutdiag}--\ref{exdiag} and of Proposition \ref{prop_Np}, while Section \ref{sec:stab} addresses that of Proposition \ref{prop_stab}.

\smallskip
\textbf{Notation.} In what follows, when denoting a norm, we omit the domain of integration whenever it is understood, writing for instance $\uLp$ instead of $\|u\|_{L^p(S_N)}$. The complete notation will be used if needed to avoid ambiguity.

\section{Preliminaries}
\label{sec:prel}

 In this section we briefly recall some well--known facts about nonlinear Schr\"odinger equations on the real line and we discuss some preliminary properties of stationary solutions on star graphs that will be important in the forthcoming analysis.

\subsection{NLSE with standard nonlinearity on the real line}
The minimization problem on the real line
\begin{equation*}
\label{infE}
\Eps(\mu):=\inf_{v\in H^1_\mu(\R)} E(v,\R)\,,
\end{equation*}
where $E:H^1(\R)\to\R$ is the NLS energy functional involving the standard nonlinearity only
\begin{equation}
\label{E}
E(v):=\frac{1}{2}\int_\R\abs{u'(x)}^2\,dx-\frac{1}{p}\int_\R\abs{u(x)}^p\,dx,
\end{equation} is nowadays classical (see for instance \cite{cazenave}).

Standard variational arguments show that ground states are solutions to the stationary nonlinear Schr\"odinger equation
\begin{equation}
\label{nlse}
u''+|u|^{p-2}u=\omega u\qquad\text{on }\R
\end{equation}
for some $\omega>0$. In fact, for every $\omega>0$ the unique (up to translations) positive solution $\phi_\omega\in H^1(\R)$ of \eqref{nlse} is
\begin{equation}
\label{soliton}
\phi_\omega(x)=\left[\frac{p}{2}\omega\left(1-\tanh^2\left(\left(\frac{p}{2}-1\right)\sqrt{\omega}|x|\right)\right)\right]^{\frac{1}{p-2}}\,.
\end{equation}
The mass of $\phi_\omega$ is given explicitly by
\begin{equation}
\label{mass phi w}
\|\phi_\omega\|_2^2=\frac{4\left(\frac{p}{2}\right)^\frac{2}{p-2}\omega^{\frac{6-p}{2(p-2)}}}{p-2}\int_{0}^1(1-s^2)^{\frac{4-p}{p-2}}\,ds,
\end{equation}
which is a continuous, strictly increasing and unbounded function of $\omega$. Therefore, for every $\mu>0$ there exists a unique $\omega(\mu)$ such that $\phi_{\omega(\mu)}$ is the unique (up to translations) positive ground state of $E$ in $H_\mu^1(\R)$. Such ground states are called solitons, and their dependence on $\mu$ is given by 
\begin{equation}
	\label{phi mu}
	\phi_{\omega(\mu)}(x)=C_p\mu^\alpha\sech^{\f\alpha\beta}\left(c_p\mu^\beta x\right)\,,\qquad\alpha=\f2{6-p},\,\beta=\f{p-2}{6-p}\,,
\end{equation}
where $C_p,\,c_p>0$ depends on $p$ only, and one can easily compute
\begin{equation}
	\label{E phi mu}
	\Eps(\mu)=E(\phi_{\omega(\mu)})=-\theta_p\mu^{2\beta+1}
\end{equation}
where $\theta_p>0$ depends on $p$ only.

In the following, for $u\in H^1(S_N)$, we will denote by $E(u,S_{N})$ the analogous standard NLS energy as in \eqref{E}
\[
E(u,S_N):=\f12\int_{S_N}|u'|^2\,dx-\f1p\int_{S_N}|u|^p\,dx=\f12\sum_{i=1}^N\int_{\mathcal{H}_i}|u_i'|^2\,dx-\f1p\sum_{i=1}^N \int_{\mathcal{H}_i}|u_i|^p\,dx\,.
\]
Note that, for every positive $u\in\HmuS$, it holds
\begin{equation}
\label{Esn geq Er}
	E(u,S_N)>\Eps(\mu)\,.
\end{equation}
Indeed, if $u$ is compactly supported on a unique half--line of $S_N$, then one can regard it as a compactly supported function in $H_\mu^1(\R)$ and \eqref{Esn geq Er} is immediate since only solitons attain $\Eps(\mu)$. On the contrary, if $u\not\equiv0$ on at least two half--lines of $S_N$, then there exist infinitely many values $t$ in the image of $u$ such that the number of pre--images
\[
N(t):=\#\{x\in S_N\,:\,u(x)=t\}>2\,.
\]
If $u(0)\neq0$ then this is trivial, as $u$ tends to $0$ along each half--line. Similarly, if $u(0)=0$, then all values realized by $u$ in a suitably small neighbourhood of the origin are attained at least twice the number of half--lines that belong to the support of $u$. Hence, letting $\widehat{u}\in H_\mu^1(\R)$ denote the symmetric rearrangement of $u$, it follows (see \cite[Proposition 3.1]{ASTcpde})
\[
\|u'\|_{L^2(S_N)}>\|\widehat{u}'\|_{L^2(\R)}\qquad\text{and}\qquad\|u\|_{L^p(S_N)}=\|\widehat{u}\|_{L^p(\R)}\,,\quad p\geq 1\,,
\]
yielding again \eqref{Esn geq Er}.
 
\subsection{Some remarks on stationary states}

Computing the Euler--Lagrange equations associated to the energy \eqref{Fpq1} and to the mass constraint \eqref{mass}, it turns out that ground states of \eqref{Fpq1} at given mass are solutions to the system
\begin{equation}
\label{EulLag}
\begin{cases}
u_i''+u_i\abs{u_i}^{p-2}=\omega u_i\quad\text{on $\HH_i$ for all $i=1,\dots,N$},\\
\sum_{i=1}^N \frac{du_i}{dx_i}(0^+)=-u(0)\abs{u(0)}^{q-2},
\end{cases}
\end{equation}
for some Lagrange multiplier $\omega>0$. 

The next proposition provides a complete characterization of the set of positive solutions of \eqref{EulLag} in $H^1(S_N)$.

\begin{figure}[t]
	\centering
	\subfloat[][]{
	\includegraphics[width=0.4\textwidth]{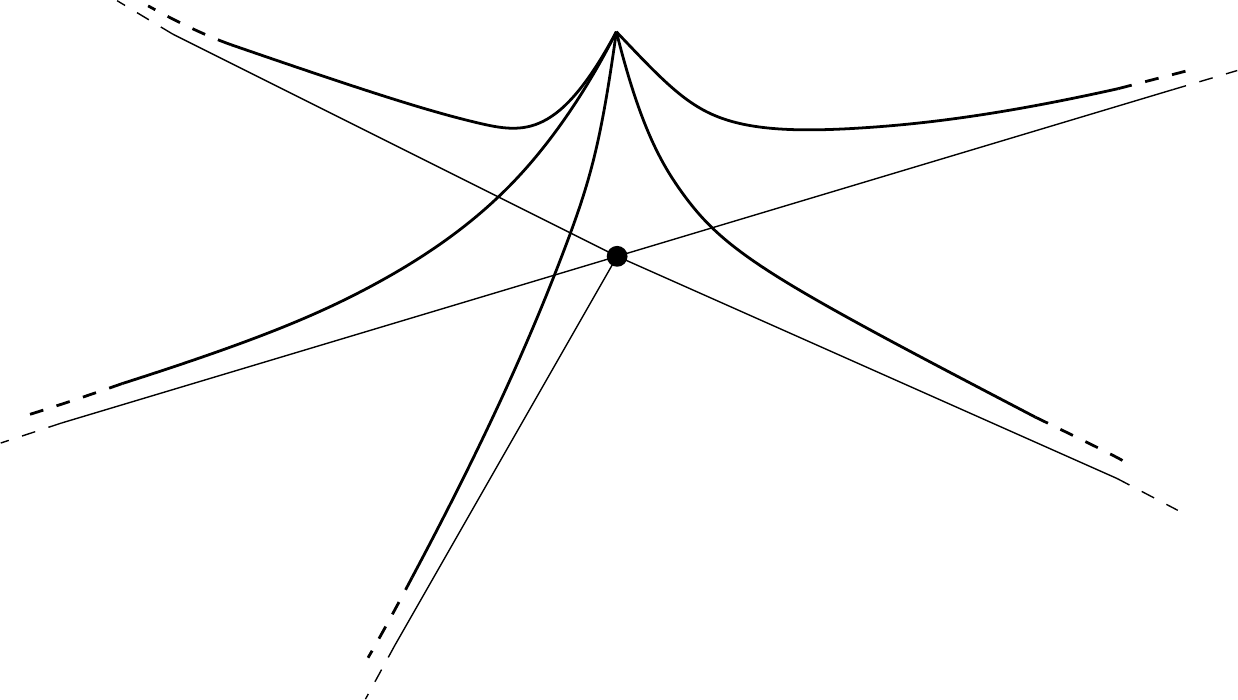}}\qquad
	\subfloat[][]{
	\includegraphics[width=0.4\textwidth]{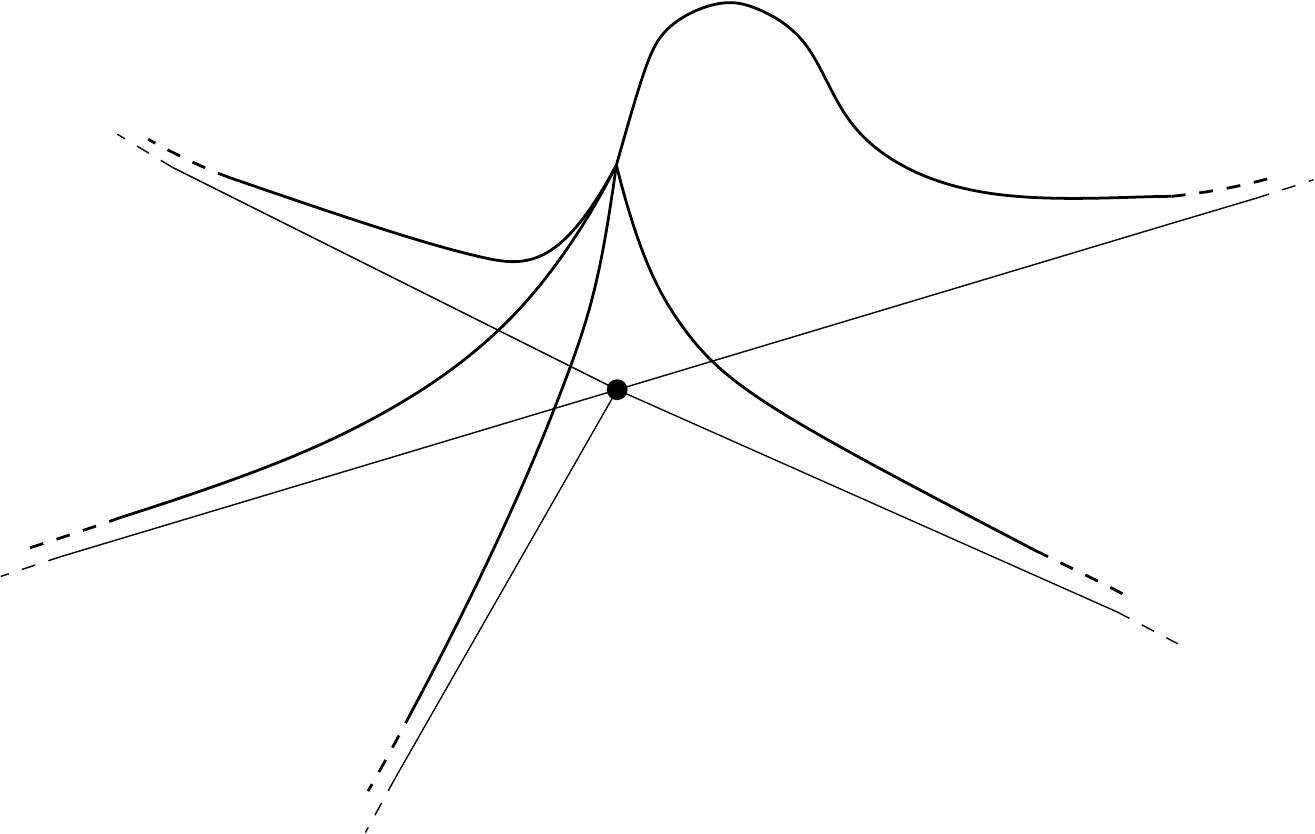}}\qquad
	\subfloat[][]{
	\includegraphics[width=0.45\textwidth]{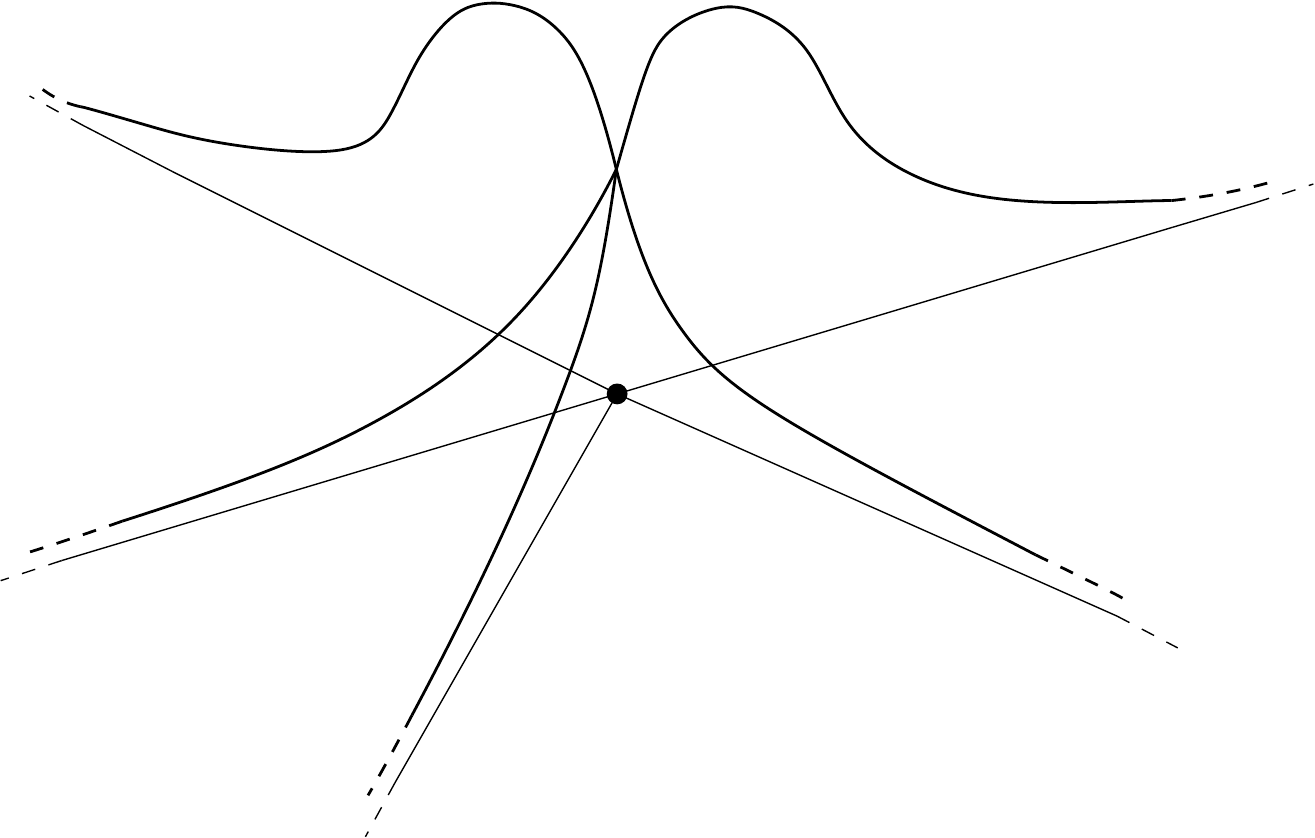}}
	\caption{The stationary states of Proposition \ref{prop_stat} on $S_5$: $\eta_0^\omega$ (A), $\eta_1^\omega$ (B) and $\eta_2^\omega$ (C).}
	\label{fig:stat}
\end{figure}

\begin{proposition}
	\label{prop_stat}
	Let $\omega>0$. Then the set $\mathcal{S}_{\omega}$ of positive solutions of \eqref{EulLag} in $H^1(S_N)$ is given by
	\[
	\mathcal{S}_{\omega}=\Bigg\{\eta_{J}^\omega\,:\,J\in\N,\,0\leq J\leq\f{N-1}2\Bigg\}\,.
	\]
	Here, $\eta_J^\omega\in H^1(S_N)$ is such that
	\begin{equation}
	\label{etaJ}
	\eta_J^\omega(\cdot)=\begin{cases}
	\phi_\omega(\cdot-a) & \text{on }J\text{ half--lines of }S_N\\
	\phi_\omega(\cdot+a) & \text{on the other }N-J\text{ half--lines of }S_N\,,
	\end{cases}
	\end{equation}
	where $a>0$ is the unique positive solution to
		\begin{equation}
	\label{tanhJ2}
	\frac{\tanh\left(\left(\frac{p}{2}-1\right)\sqrt{\omega}a\right)}{\left(1- \tanh^2\left(\left(\frac{p}{2}-1\right)\sqrt{\omega}a\right)\right)^{\frac{q-2}{p-2}}}=\frac{\left(\frac{p}{2}\right)^{\frac{q-2}{p-2}}\omega^{\frac{2q-2-p}{2(p-2)}}}{N-2J}\,.
	\end{equation}
	Furthermore, for every $J$, the function $\omega\mapsto\|\eta_J^\omega\|_2^2$ is continuous and strictly increasing.
\end{proposition}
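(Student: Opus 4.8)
The plan is to reduce the elliptic system \eqref{EulLag} on $S_N$ to a scalar ODE on the half--line, classify the half--line building blocks, and then reconstruct the global solution from the continuity and the Kirchhoff--type conditions at $\vv$. First I would fix $\omega>0$ and take a positive solution $u$ of \eqref{EulLag}. On each edge $\HH_i$ the restriction $u_i>0$ solves $u_i''+u_i^{p-1}=\omega u_i$ on $\R^+$ with $u_i\in H^1(\R^+)$, so $u_i,u_i'\to0$ at $+\infty$; the conserved quantity $\f12(u_i')^2+\f1pu_i^p-\f\omega2u_i^2$ then vanishes identically, which confines $(u_i,u_i')$ to the homoclinic loop of the origin in the phase plane --- precisely the curve swept out by the translates of $\phi_\omega$. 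Since in addition $u(\vv)>0$ (otherwise nonnegativity together with the second equation in \eqref{EulLag} would force $u\equiv0$) and no $u_i$ can vanish in the interior of an edge (an interior zero would be a minimum, hence $u_i\equiv0$ by Cauchy uniqueness), one obtains $u_i=\phi_\omega(\cdot-a_i)|_{\R^+}$ for a unique $a_i\in\R$. Continuity at $\vv$ gives $\phi_\omega(a_i)=u(\vv)$ for all $i$, and as $\phi_\omega$ is even and strictly decreasing on $\R^+$ this forces $|a_i|=a$ for a common $a\ge0$; denoting by $J$ the number of edges with $a_i=+a$ (hence $a_i=-a$ on the remaining $N-J$), $u$ is of the form \eqref{etaJ}. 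The value $a=0$ is excluded at once, for it would give $u_i'(0^+)=\phi_\omega'(0)=0$ on every edge, incompatible with $\sum_iu_i'(0^+)=-u(\vv)^{q-1}<0$.

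Next I would use the explicit soliton. A direct differentiation of \eqref{soliton} yields $\phi_\omega'(x)=-\sqrt\omega\,\phi_\omega(x)\tanh\!\big((\f p2-1)\sqrt\omega\,x\big)$ for $x\ge0$, and since $\phi_\omega'$ is odd the Kirchhoff condition for $\eta_J^\omega$ reduces to $(N-2J)\phi_\omega'(a)=-\phi_\omega(a)^{q-1}$. Because $\phi_\omega'(a)<0$ for $a>0$, this can hold only when $N-2J>0$, i.e. $J\in\N$ with $0\le J\le\f{N-1}2$; conversely, inserting the expressions for $\phi_\omega$ and $\phi_\omega'$ turns it into exactly \eqref{tanhJ2}. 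Writing $b=(\f p2-1)\sqrt\omega\,a$, the left--hand side of \eqref{tanhJ2} equals $G(b):=\tanh b\,(1-\tanh^2b)^{-\frac{q-2}{p-2}}$, and a short computation gives $G'(b)=\cosh^{\frac{2(q-2)}{p-2}-2}b\,\big(1+\f{2(q-2)}{p-2}\sinh^2b\big)>0$, so $G$ is an increasing bijection of $(0,\infty)$ onto $(0,\infty)$. Hence for each admissible $J$ and each $\omega>0$ there is a unique $a>0$ solving \eqref{tanhJ2}, and the function \eqref{etaJ} it defines is continuous at $\vv$, solves the edge equations (being a union of soliton translates) and the vertex equation by construction, so it lies in $\mathcal{S}_\omega$. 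This proves the asserted description of $\mathcal{S}_\omega$.

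For the mass, the substitution $t=(\f p2-1)\sqrt\omega\,x$ in $\|\eta_J^\omega\|_2^2=\f N2\|\phi_\omega\|_2^2-(N-2J)\int_0^a\phi_\omega^2$ gives
\[
\|\eta_J^\omega\|_2^2=C_p\,\omega^{\kappa}\big(NH_\infty-(N-2J)H(b(\omega))\big),\qquad \kappa=\f{6-p}{2(p-2)}>0,
\]
with $H(b)=\int_0^b\sech^{\frac4{p-2}}t\,dt$, $H_\infty=H(+\infty)<\infty$, $C_p>0$, and $b(\omega)$ the solution of \eqref{tanhJ2}, which depends continuously on $\omega$; hence $\|\eta_J^\omega\|_2^2$ is continuous. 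Differentiating, and using the identity $G'(b)\,b'(\omega)=\gamma\,\omega^{-1}G(b)$ obtained from \eqref{tanhJ2} with $\gamma=\f{2q-2-p}{2(p-2)}$, one finds that $\f{d}{d\omega}\|\eta_J^\omega\|_2^2$ has the sign of $\kappa NH_\infty-(N-2J)\Psi(b)$, where
\[
\Psi(b):=\kappa H(b)+\gamma\,\f{\sinh b\,\cosh^{-2\kappa}b}{1+\f{2(q-2)}{p-2}\sinh^2b},\qquad \Psi(0)=0,\quad \Psi(+\infty)=\kappa H_\infty.
\]
Since $N-2J\le N$ and $H(b)<H_\infty$, it suffices to show $\Psi(b)<\kappa H_\infty$ for all $b\in(0,\infty)$. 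When $q\le\f p2+1$, i.e. $\gamma\le0$, this is immediate because the second summand of $\Psi$ is then nonpositive while $\kappa H(b)<\kappa H_\infty$.

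The remaining and genuinely delicate case is $q>\f p2+1$, i.e. $\gamma>0$: then $b(\omega)$ is increasing, so $\omega^{\kappa}$ and $NH_\infty-(N-2J)H(b(\omega))$ vary in opposite directions and the sign is not a priori clear. Here I would establish $\Psi'(b)>0$ directly. Setting $u=\sinh^2b$ and $\sigma=\f{q-2}{p-2}$, one computes
\[
\Psi'(b)=\f{\cosh^{-(1+2\kappa)}b}{(1+2\sigma u)^2}\,\Big(\kappa(1+2\sigma u)^2+\gamma\big(1+(1-2\kappa-2\sigma)u-2\sigma(1+2\kappa)u^2\big)\Big),
\]
and the bracket, as a quadratic $A+Bu+Cu^2$ in $u$, has $A=\kappa+\gamma>0$, $C=\f{2(q-2)(4-q)}{(p-2)^2}>0$, and ---this is the crucial point--- $B>0$ throughout the range $2(q-2)>p-2$, the last inequality reducing, after clearing denominators, to the $L^2$--subcriticality $q<4$ of the pointwise term. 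Consequently $A+Bu+Cu^2>0$ for every $u\ge0$, so $\Psi$ strictly increases from $0$ to $\kappa H_\infty$, which yields $\Psi(b)<\kappa H_\infty$ and hence $\f{d}{d\omega}\|\eta_J^\omega\|_2^2>0$ for all $\omega>0$. I expect this quadratic positivity --- equivalently the sign of the coefficient $B$ --- to be the main obstacle of the whole argument; the rest is phase--plane analysis together with bookkeeping on the explicit soliton.
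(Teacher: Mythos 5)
Your proof is correct and follows the same overall route as the paper: phase--plane classification of the edge restrictions as soliton translates, continuity forcing $|a_i|=a$, the sign of the Kirchhoff sum forcing $N-2J>0$, monotonicity of $t\mapsto t(1-t^2)^{-\frac{q-2}{p-2}}$ (your $G(b)$ is exactly the paper's $f(\tanh b)$) giving uniqueness of $a$, and then differentiation of the explicit mass. The one place where you genuinely diverge is the final monotonicity of $\omega\mapsto\|\eta_J^\omega\|_2^2$: the paper reduces it to the positivity of the bracket
\[
\f{6-p}{2(p-2)}\int_{t}^1(1-s^2)^{\frac{4-p}{p-2}}\,ds-\f{2q-2-p}{2(p-2)}\f{t(1-t^2)^{\f{2}{p-2}}}{t^2\left(2\f{q-2}{p-2}-1\right)+1}
\]
and then defers that positivity to the computations of \cite{BD}, whereas you prove it from scratch. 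In hyperbolic variables your criterion $\Psi(b)<\kappa H_\infty$ is precisely the positivity of that bracket, so the two arguments prove the same inequality; your contribution is the self--contained verification via the monotone increase of $\Psi$ from $0$ to $\kappa H_\infty$ when $\gamma>0$, resting on the quadratic $A+Bu+Cu^2$ with $u=\sinh^2 b$. I checked your coefficients: $A=\kappa+\gamma>0$ (note this uses $\gamma>0$; for general $p,q$ one can have $\kappa+\gamma\le 0$, but you correctly restrict to that case), $C=\f{2(q-2)(4-q)}{(p-2)^2}>0$, and $B=\f{pq-2q^2+10q+4p-p^2-20}{(p-2)^2}$. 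The only compressed step is the positivity of $B$: your one--line remark about ``clearing denominators'' is not a proof, but the claim is true --- the numerator is concave in $p$ and equals $-2(q-2)(q-4)$ at $p=2$ and $-4(q-2)(q-4)$ at $p=2q-2$, both positive for $q\in(2,4)$, hence it is positive on the whole strip $2<p<2q-2$ where $\gamma>0$. With that detail spelled out, your argument is a complete and self--contained replacement for the external reference, which is a modest but real improvement over the paper's write--up.
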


\begin{proof}
	Relying on the discussion of the previous subsection, it is immediate to see that, on each half--line $\HH_i$, $i=1,\,\dots,\,N$, solutions of \eqref{EulLag} coincide with the restriction $\phi_\omega(x-a_i)$ of the soliton $\phi_\omega$, for suitable $a_i\in\R$. 
	Since the continuity condition at the origin is prescribed, then $\phi_\omega(-a_i)=\phi_\omega(-a_j)$ for $1\le i,j\le N$, therefore $a_i=\epsilon_i a$, where $a>0$ and $\epsilon_i=\sgn(a_i)$ for all $1\le i\le N$.
	
	Moreover, by \eqref{soliton}, the second line of \eqref{EulLag} becomes
	\begin{equation}
	\label{tanh1}
	\sqrt{\omega}\tanh\left(\left(\frac{p}{2}-1\right)\sqrt{\omega}a\right)\sum_{i=1}^N  \epsilon_i=-\phi_\omega(a)^{q-2}<0,
	\end{equation}
	thus implying $\sum_{i=1}^N\epsilon_i<0$, namely the number of positive $\epsilon_i$ cannot exceed $\f{N-1}2$.
	
	Let then $J$ be a given integer such that $0\leq J\leq\f{N-1}2$ and let $\epsilon_i=1$ if and only if $i\leq J$. Since then $\sum_{i=1}^N=2J-N$, \eqref{tanh1} reads
	\begin{equation}
	\label{tanhJ1}
	\tanh\left(\left(\frac{p}{2}-1\right)\sqrt{\omega}a\right)=\frac{\phi_\omega(a)^{q-2}}{\sqrt{\omega}(N-2J)}\,,
	\end{equation}
	so that \eqref{tanhJ2} is proved.

	Setting $t:=\tanh\left(\left(\frac{p}{2}-1\right)\sqrt{\omega}a\right)$, then $t\in[0,1)$ and we can rewrite \eqref{tanhJ2} as 
	\begin{equation}
	\label{ftJ}
	f(t):=\frac{t}{(1-t^2)^{\frac{q-2}{p-2}}}=\frac{\left(\frac{p}{2}\right)^{\frac{q-2}{p-2}}\omega^{\frac{2q-2-p}{2(p-2)}}}{N-2J}\,.
	\end{equation}
	As $f(0)=0$, $\lim_{t\to1^-}f(t)=+\infty$ and $f'(t)>0$ for every $t\in(0,1)$, it follows that there exists a unique $t\in(0,1)$ for which \eqref{ftJ} is satisfied, in turn showing that for every $\omega>0$ and every integer $0\leq J\leq\f{N-1}2$, there exists a unique positive solution $\eta_J^\omega$ of \eqref{EulLag}, up to exchange of edges.
	
	Relying again on \eqref{soliton}, we get
	\begin{equation}
	\label{mass1}
	\|\eta_J^\omega\|_2^2=\frac{2(\frac{p}{2})^\frac{2}{p-2}\omega^{\frac{6-p}{2(p-2)}}}{p-2}\left[2J\int_{0}^1(1-s^2)^{\frac{4-p}{p-2}}\,ds+(N-2J)\int_{t}^1(1-s^2)^{\frac{4-p}{p-2}}\,ds\right].
	\end{equation}
	Differentiating with respect to $\omega$ yields
	\begin{equation}
	\label{dmu dw}
	\begin{split}
	\f d{d\omega}\|\eta_J^\omega\|_2^2=&\f{\left(\f p2\right)^{\f2{p-2}}(6-p)}{(p-2)^2}2J\omega^{\f{6-p}{2(p-2)}-1}\int_{0}^1(1-s^2)^{\frac{4-p}{p-2}}\,ds\\
	+&\f{2\left(\f p2\right)^{\f2{p-2}}}{p-2}(N-2J)\omega^{\f{6-p}{2(p-2)}-1}\left[\f{6-p}{2(p-2)}\int_{t}^1(1-s^2)^{\frac{4-p}{p-2}}\,ds-\omega(1-t^2)^{\f{4-p}{p-2}}t'(\omega)\right]\,,
	\end{split}
	\end{equation}
	where, by \eqref{ftJ},
	\begin{equation}
	\label{t'}
	t'(\omega)=\f{\left(\f p2\right)^{\f{q-2}{p-2}}(2q-2-p)}{2(p-2)(N-2J)}\omega^{\f{2q-2-p}{2(p-2)}-1}\f{(1-t^2)^{\f{q-2}{p-2}+1}}{t^2\left(2\f{q-2}{p-2}-1\right)+1}\,.
	\end{equation}
	
	Note that the first term in the sum on the right hand side of \eqref{dmu dw} is strictly positive for every $\omega>0$. Furthermore, plugging \eqref{t'} into \eqref{dmu dw} and making use of \eqref{ftJ} allows to rewrite the term between square brackets as
	\[
	\f{6-p}{2(p-2)}\int_{t}^1(1-s^2)^{\frac{4-p}{p-2}}\,ds-\f{2q-2-p}{2(p-2)}\f{t(1-t^2)^{\f{2}{p-2}}}{t^2\left(2\f{q-2}{p-2}-1\right)+1}\,,
	\]
	which can be proved to be strictly positive for every $t\in(0,1)$ by the same calculations in \cite[pp.11--12]{BD}. Hence, $\f d{d\omega}\|\eta_J^\omega\|_2^2>0$ for every $\omega>0$ and $\|\eta_J^\omega\|_2^2$ is a strictly increasing function of $\omega$.
\end{proof}

The next straightforward corollary describes the set of stationary states at prescribed mass.
\begin{corollary}
	\label{cor_stats}
	Let $\mu>0$ be fixed. Then for every integer $J$ such that $0\leq J\leq \f{N-1}2$ there exists a unique $\omega>0$ such that $\eta_J^\omega\in\HmuS$. Moreover, it holds
	\begin{equation}
	\label{Fpqstationary}
	F_{p,q}(\eta_{J}^\omega)=-\frac{6-p}{2(p+2)}\omega\mu+\left(\frac{2}{p+2}-\frac{1}{q}\right)\abs{\eta_{J}^\omega(0)}^q\,.
	\end{equation}
\end{corollary}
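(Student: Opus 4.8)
The plan is to establish the two claims in turn: first the existence and uniqueness of the frequency $\omega$ producing the prescribed mass, and then the energy identity \eqref{Fpqstationary}.

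For the first claim, fix $J$ with $0\le J\le\frac{N-1}{2}$. By Proposition \ref{prop_stat} the map $\omega\mapsto\|\eta_J^\omega\|_2^2$ is continuous and strictly increasing on $(0,+\infty)$, so it suffices to check that it is surjective onto $(0,+\infty)$, namely that it tends to $0$ as $\omega\to0^+$ and to $+\infty$ as $\omega\to+\infty$. From \eqref{mass1} the prefactor $\omega^{\frac{6-p}{2(p-2)}}$ has positive exponent (since $p<6$), while the bracket is bounded from above by $N\int_0^1(1-s^2)^{\frac{4-p}{p-2}}\,ds$ because $\int_t^1\le\int_0^1$; hence $\|\eta_J^\omega\|_2^2\to0$ as $\omega\to0^+$, for every $J$. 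For the limit at $+\infty$, if $J\ge1$ the bracket in \eqref{mass1} is bounded below by $2J\int_0^1(1-s^2)^{\frac{4-p}{p-2}}\,ds>0$, and the diverging prefactor does the job. If $J=0$ one has to follow the behaviour of $t=t(\omega)$ prescribed by \eqref{ftJ}: when $q\le\frac p2+1$ the right-hand side of \eqref{ftJ} stays bounded as $\omega\to+\infty$, so $t(\omega)$ remains bounded away from $1$, the bracket stays bounded below by a positive constant, and the mass diverges as before; when $q>\frac p2+1$ one has $t(\omega)\to1^-$, and a short asymptotic analysis of \eqref{ftJ}, based on $f(t)\sim c_{p,q}(1-t)^{-\frac{q-2}{p-2}}$ as $t\to1^-$, shows that $\|\eta_0^\omega\|_2^2$ grows like a positive power of $\omega$ (precisely like $\omega^{\frac{4-q}{2(q-2)}}$ up to constants, whose exponent is positive because $q<4$), hence diverges as well. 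This proves existence and uniqueness.

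For the energy identity I will use two integral identities satisfied by every positive solution $u=\eta_J^\omega$ of \eqref{EulLag}. Multiplying the $i$-th equation of \eqref{EulLag} by $u_i$, integrating over $\HH_i$, summing over $i$, and integrating $\int_{\HH_i}u_iu_i''$ by parts, the boundary terms at infinity vanish (on each half--line $u_i$ is a restriction of the exponentially decaying soliton $\phi_\omega$), whereas the boundary terms at the vertex sum to $-u(0)\sum_i\frac{du_i}{dx_i}(0^+)=|u(0)|^q$ by the vertex condition in \eqref{EulLag}; this gives the Nehari-type identity $\|u'\|_2^2=\|u\|_p^p+|u(0)|^q-\omega\mu$. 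Multiplying instead the $i$-th equation by $x_iu_i'$, with $x_i$ the coordinate on $\HH_i$, and integrating by parts over $[0,+\infty)$, the weight $x_i$ annihilates every boundary term at the vertex, so summing over $i$ yields the Pohozaev-type identity $\|u'\|_2^2+\frac2p\|u\|_p^p=\omega\mu$. Solving this $2\times2$ linear system for $\|u'\|_2^2$ and $\|u\|_p^p$ in terms of $\omega\mu$ and $|u(0)|^q$, and inserting the result into $F_{p,q}(u)=\frac12\|u'\|_2^2-\frac1p\|u\|_p^p-\frac1q|u(0)|^q$, produces \eqref{Fpqstationary} after elementary algebra.

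All the computations above are routine; the only mildly delicate point is the case $J=0$ in the surjectivity argument, where one needs the near-$t=1$ asymptotics of the elementary function $f$ in \eqref{ftJ}. These are of exactly the same nature as the monotonicity estimates already borrowed from \cite{BD} in the proof of Proposition \ref{prop_stat}, so no genuinely new difficulty arises.
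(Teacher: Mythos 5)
Your proof is correct and follows essentially the same route as the paper: the first claim via continuity and strict monotonicity of $\omega\mapsto\|\eta_J^\omega\|_2^2$ from Proposition \ref{prop_stat}, and the second via the Nehari identity combined with $\|u'\|_2^2+\tfrac{2}{p}\|u\|_p^p=\omega\mu$ (which the paper derives from the pointwise first integral of the ODE rather than from your Pohozaev multiplier $x_iu_i'$, but the resulting equation is identical, and the ensuing algebra is the same). Your explicit verification that the mass map tends to $0$ as $\omega\to0^+$ and to $+\infty$ as $\omega\to+\infty$ — in particular the $J=0$, $q>\frac{p}{2}+1$ asymptotics giving growth like $\omega^{\frac{4-q}{2(q-2)}}$ — is a welcome addition, since the paper asserts unboundedness by appealing to Proposition \ref{prop_stat}, which only states continuity and strict monotonicity.
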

\begin{proof}
	Fix $J$ integer such that $0\leq J\leq \f{N-1}{2}$. By Proposition \ref{prop_stat}, $\|\eta_J^\omega\|_2^2$ is an unbounded, strictly increasing continuous function of $\omega$, thus implying that for every $\mu>0$ there exists a unique value of $\omega$ for which $\|\eta_J^\omega\|_2^2=\mu$, i.e. $\eta_J^\omega\in\HmuS$.
	
	Since $\eta_J^\omega$ solves \eqref{EulLag}, multiplying the equation by $(\eta_J^\omega)'$ and, for every $x\in S_N$, integrating on $[x,+\infty)$ one obtains
	\[
	\f12\left((\eta_J^{\omega})'\right)^2(x)=\f\omega2\left(\eta_J^\omega\right)^2(x)-\f1p\left(\eta_J^\omega\right)^p(x)\,,
	\]
	so that integrating on $S_N$ gives
\begin{equation}
	\label{energycons}
	\frac{1}{2}\|\left(\eta_J^\omega\right)'\|_2^2=\frac{\omega}{2}\|\eta_J^\omega\|_2^2-\frac{1}{p}\|\eta_J^\omega\|_p^p\,.
\end{equation}
	Furthermore, multiplying the first line of \eqref{EulLag} by $\eta_J^\omega$, integrating over $S_N$ and making use of the second line of \eqref{EulLag} yields
	\begin{equation}
	\label{nehariman}
	\abs{\eta_J^\omega(0)}^q-\|\left(\eta_J^\omega\right)'\|_2^2+\|\eta_J^\omega\|_p^p-\omega\|\eta_J^\omega\|_2^2=0.
	\end{equation}
	Combining \eqref{energycons} and \eqref{nehariman} leads to \eqref{Fpqstationary} and we conclude.
\end{proof}

\section{Existence criterion and characterization of ground states}
\label{sec:ex_char}
In this section we provide a sufficient condition granting existence of ground states of $F_{p,q}$ in $\HmuS$ and prove that, whenever they exist, such ground states must be monotonically decreasing radial functions on $S_N$.

Let us begin with a compactness result. To this aim we recall the Gagliardo--Nirenberg inequalities
\begin{equation}
\label{GNp}
\uLp^p\le K_p \|u\|_2^{\frac{p}{2}+1}\|u'\|_2^{\frac{p}{2}-1}\,,\qquad p\geq2\,,
\end{equation}
where $K_p>0$ depends only on $p$, and   
\begin{equation}
\label{GNinf}
\|u\|_\infty^2\le \|u\|_2\|u'\|_2\,,
\end{equation}
holding for every $u\in H^1(S_N)$ (we refer to \cite{AST} for a proof of these inequalities on general metric graphs).

\begin{proposition}
\label{compactth}
For every $\mu>0$ it holds
\begin{equation}
\label{F leq E}
\F_{p,q}(\mu)\leq\Eps(\mu)\,.
\end{equation}
Furthermore, if $\F_{p,q}(\mu)<\Eps(\mu)$, then ground states of \eqref{Fpq1} at mass $\mu$ exist.
\end{proposition}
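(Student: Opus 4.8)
The plan is to prove the two assertions separately: the upper bound \eqref{F leq E} by a cut-off argument, and the existence under the strict inequality by the concentration–compactness method (or a direct minimizing-sequence argument exploiting the strictness).

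First I would establish \eqref{F leq E}. The idea, already sketched in the introduction, is to approximate the soliton $\phi_{\omega(\mu)}$ on $\R$ by functions living on a single half-line of $S_N$. Fix $\varepsilon>0$. Take the soliton at mass $\mu$, translate its peak far from the origin, and multiply by a smooth cut-off that vanishes in a neighbourhood of $\vv$; then place the resulting function on one half-line and set it to zero on the others. Continuity at $\vv$ holds because the function vanishes there. Renormalising the $L^2$ norm back to $\mu$ costs an arbitrarily small amount of energy, since the soliton decays exponentially. Because $u(\vv)=0$, the pointwise term $-\frac1q|u(\vv)|^q$ vanishes, so $F_{p,q}$ of this competitor equals its standard NLS energy $E(\cdot,S_N)$, which can be made as close as we like to $E(\phi_{\omega(\mu)})=\Eps(\mu)$. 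Hence $\F_{p,q}(\mu)\le\Eps(\mu)+\varepsilon$ for every $\varepsilon>0$, giving \eqref{F leq E}.

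Next, assume $\F_{p,q}(\mu)<\Eps(\mu)$ and take a minimizing sequence $(u_n)\subset\HmuS$. By standard manipulations one may assume $u_n\ge0$, and, using \eqref{GNp}, \eqref{GNinf} and $q<4<6$, one sees that $F_{p,q}$ is coercive on $\HmuS$, so $\|u_n'\|_2$ is bounded and $(u_n)$ is bounded in $H^1(S_N)$. Up to subsequences, $u_n\rightharpoonup u$ in $H^1(S_N)$, $u_n\to u$ locally uniformly (hence $u_n(\vv)\to u(\vv)$), and $\|u\|_2^2=:m\le\mu$. The delicate point is to rule out the vanishing and dichotomy scenarios of concentration–compactness. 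Vanishing is excluded because if the mass of $u_n$ escaped to infinity along the half-lines, the limiting problem would be the one on the real line and one would only recover $\liminf F_{p,q}(u_n)\ge\Eps(\mu)$, contradicting the strict inequality $\F_{p,q}(\mu)<\Eps(\mu)$; in particular $m>0$. For dichotomy, a splitting $\mu=m+(\mu-m)$ with $0<m<\mu$ would give $\F_{p,q}(\mu)\ge \F_{p,q}(m)+\Eps(\mu-m)$ (the piece escaping to infinity feels only the standard nonlinearity), and then the subadditivity-type inequalities $\F_{p,q}(m)\ge\Eps(m)$ combined with the strict superadditivity $\Eps(m)+\Eps(\mu-m)>\Eps(\mu)$ (which follows from \eqref{E phi mu}, since $t\mapsto-\theta_p t^{2\beta+1}$ is strictly concave and vanishes at $0$, so $\Eps$ is strictly superadditive) would again contradict $\F_{p,q}(\mu)<\Eps(\mu)$. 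Therefore $m=\mu$, $u_n\to u$ strongly in $L^2(S_N)$, and by weak lower semicontinuity of the $H^1$-seminorm together with the strong convergence in $L^p(S_N)$ (interpolation between $L^2$ and $L^\infty$ on each half-line, using local uniform convergence plus the tightness just obtained) and the continuity $|u_n(\vv)|^q\to|u(\vv)|^q$, we get $F_{p,q}(u)\le\liminf F_{p,q}(u_n)=\F_{p,q}(\mu)$, so $u$ is a ground state.

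The main obstacle is the dichotomy analysis, i.e. making rigorous the statement that a fraction of mass splitting off and running to infinity along the half-lines carries energy $\Eps(\mu-m)$, and then closing the argument via the strict superadditivity of $\Eps$ together with $\F_{p,q}(\cdot)\le\Eps(\cdot)$. The cut-off bookkeeping needed to localise $u_n$ and to control cross terms in $\|u_n'\|_2^2$ and $\|u_n\|_p^p$ is routine but must be done carefully; the pointwise term is harmless here since it only involves the value at $\vv$, which is captured by the local uniform convergence. Once compactness of the minimizing sequence is secured, the conclusion that the limit is a genuine minimiser is immediate.
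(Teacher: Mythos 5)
Your proof of \eqref{F leq E} is essentially the paper's: a truncated soliton supported on a single half--line, vanishing at $\vv$ so that the pointwise term drops out, with a small renormalisation of the mass. That part is fine. The compactness part also starts along the same lines as the paper (Gagliardo--Nirenberg coercivity, weak limit $u$ with mass $m\le\mu$, exclusion of $m=0$ by comparing with $\Eps(\mu)$ and using the strict inequality $\F_{p,q}(\mu)<\Eps(\mu)$), but the step you yourself single out as the main obstacle --- the exclusion of dichotomy --- rests on a false inequality.

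Concretely, you write that from $\F_{p,q}(\mu)\ge\F_{p,q}(m)+\Eps(\mu-m)$ you would conclude via ``$\F_{p,q}(m)\ge\Eps(m)$'' and the strict superadditivity of $\Eps$. But $\F_{p,q}(m)\ge\Eps(m)$ is the reverse of \eqref{F leq E}, i.e.\ the reverse of the first half of the very proposition you are proving; it fails in general (and fails strictly whenever a ground state at mass $m$ exists, by Proposition \ref{GSsym} and Corollary \ref{compactcor}). With the correct inequality $\F_{p,q}(m)\le\Eps(m)$ the chain
$\F_{p,q}(m)+\Eps(\mu-m)\ge\Eps(m)+\Eps(\mu-m)>\Eps(\mu)$ collapses and no contradiction is obtained. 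The standard repair is to replace the comparison with $\Eps(m)$ by strict sub-homogeneity under mass rescaling: since $p,q>2$, for any nonzero $v$ with $\|v\|_2^2=m<\mu$ one has $\F_{p,q}(\mu)\le F_{p,q}\bigl(\sqrt{\mu/m}\,v\bigr)<\tfrac{\mu}{m}F_{p,q}(v)$, hence $\F_{p,q}(m)>\tfrac{m}{\mu}\F_{p,q}(\mu)$, and likewise $\Eps(\mu-m)>\tfrac{\mu-m}{\mu}\Eps(\mu)\ge\tfrac{\mu-m}{\mu}\F_{p,q}(\mu)$ because $\Eps(t)=-\theta_p t^{2\beta+1}$ with $2\beta+1>1$; summing gives $\F_{p,q}(\mu)>\F_{p,q}(\mu)$, the desired contradiction. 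This is exactly what the paper does, implemented directly on the sequence: it applies the rescaling inequality to $u_n-u$ and to $u$ separately and recombines via the Brezis--Lieb splitting $F_{p,q}(u_n)=F_{p,q}(u_n-u)+F_{p,q}(u)+o(1)$, which also spares you the cut-off bookkeeping you anticipate for localising the dichotomic pieces. With that substitution your argument closes; as written, it does not.
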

\begin{proof}
Let us first prove \eqref{F leq E}. For every $\varepsilon>0$, let $v_\varepsilon:=\kappa_\varepsilon(\phi_\mu-\varepsilon)_+$, where $\phi_\mu$ is as in \eqref{phi mu} and $\kappa_\varepsilon>0$ is chosen to guarantee $\|v_\varepsilon\|_{L^2(\R)}^2=\mu$, so that $v_\varepsilon\in H_\mu^1(\R)$. Since $\|v_\varepsilon\|_{L^q(\R)}\to\|\phi_\omega\|_{L^q(\R)}$ as $\varepsilon\to0$, for every $q\geq1$, then $\kappa_\varepsilon\to1$ for $\varepsilon\to0$, and we get 
\[
\mathcal{E}(\mu)\leq E(v_\varepsilon,\R)=\f12\kappa_\varepsilon^2\int_R|v_\varepsilon'|^2\,dx-\f1p\kappa_\varepsilon^{ p}\int_\R|v_\varepsilon|^p\,dx\leq E(\phi_\omega,\R)+o(1)=\mathcal{E}(\mu)+o(1)
\]
for $\varepsilon$ small enough, making use also of $\|v_\varepsilon'\|_{L^2(\R)}\leq\|\phi_{\omega}\|_{L^2(\R)}$. Hence, $E(v_\varepsilon,\R)\to\mathcal{E}(\mu)$ as $\varepsilon\to0$. Moreover, $v_\varepsilon$ has compact support, so that one can think of it as supported on any given half--line of $S_N$. We thus have
\[
\Eps(\mu)=\lim_{\varepsilon\to0^+}E(v_\varepsilon,\R)=\lim_{\varepsilon\to0^+}F_{p,q}(v_\varepsilon)\geq\F_{p,q}(\mu)\,,
\]
so \eqref{F leq E} is proved.

Assume now that $\F_{p,q}(\mu)<\Eps(\mu)$ and let $(u_n)\subset H^1_\mu(S_{N})$ be a
minimizing sequence for $F_{p,q}$. Plugging \eqref{GNp} and \eqref{GNinf} into the definition of $F_{p,q}$ gives
\begin{equation*}
F_{p,q}(u_n)\ge\frac{1}{2}\|u_n'\|_2^2-\frac{K_p}{p}\mu^\frac{p+2}{4}\|u_n'\|_2^{\frac{p}{2} -1}-\frac{1}{q}\mu^\frac{q}{4}\|u_n'\|_2^\frac{q}{2}
\end{equation*}
which, since $p\in(2,6)$, $q\in(2,4)$, ensures that $(u_n)$ is bounded in $H^1(S_N)$.
Therefore there exists $u\in H^1(S_N)$ such that, up to subsequences, $u_n \deb u$ weakly in $H^1(S_{N})$, $u_n\rightarrow u$ in $L^\infty_{loc}(S_{N})$ and consequently $u_n\rightarrow u$ a.e. in $S_{N}$.

Set $m:=\|u\|_2^2$. By weak lower semicontinuity, we have $m\le \mu$.

Assume $m=0$, that is $u\equiv 0$. Then $u_n(0)\to0$ as $n\to+\infty$, so that recalling \eqref{Esn geq Er} leads to
\begin{equation*}
\Eps(\mu)>\F_{p,q}(\mu)=\lim_n F_{p,q}(u_n)=\lim_n E(u_n,S_{N})\ge\Eps(\mu),
\end{equation*}
i.e., a contradiction. Hence, $u\not\equiv0$ on $S_N$.

Suppose then that $0<m<\mu$. By weak convergence in $H^1(S_N)$ of $u_n$ to $u$, we get $\|u_n-u\|_2^2=\mu-m+o(1)$ for $n\to+\infty$. On the one hand, since $p,q>2$ and $\frac{\mu}{\|u_n-u\|_2^2}>1$ for $n$ sufficiently large,
\begin{equation*}
\begin{split}
&\F_{p,q}(\mu)\le F_{p,q}\left(\sqrt{\frac{\mu}{\|u_n-u\|_2^2}}(u_n-u)\right)\\
&=\f12\f{\mu}{\|u_n-u\|_2^2}\|u'_n-u'_n\|_2^2-\frac{1}{p}\left(\frac{\mu}{\|u_n-u\|_2^2}\right)^{\frac{p}{2}}\|u_n-u\|_p^p\\
&-\frac{1}{q}\left(\frac{\mu}{\|u_n-u\|_2^2}\right)^{\frac{q}{2}}\abs{u_n(0)-u(0)}^q<\frac{\mu}{\|u_n-u\|_2^2}F_{p,q}(u_n-u), 
\end{split}
\end{equation*}
so that
\begin{equation}
\label{Fun-est}
\liminf_n F_{p,q}(u_n-u)\ge \frac{\mu-m}{\mu}\F_{p,q}(\mu).
\end{equation}
On the other hand, an analogous reasoning leads to
\begin{equation*}
\F_{p,q}(\mu)\le F_{p,q}\left(\sqrt{\frac{\mu}{\|u\|_2^2}}\,u\right)<\frac{\mu}{\|u\|_2^2}F_{p,q}(u),
\end{equation*}
so
\begin{equation}
\label{Fu-est}
F_{p,q}(u)>\frac{m}{\mu}\F_{p,q}(\mu).
\end{equation}
Moreover, it holds
\begin{equation}
\label{FpqBrezLieb}
F_{p,q}(u_n)=F_{p,q}(u_n-u)+F_{p,q}(u)+o(1).
\end{equation}
Indeed, by $u'_n\deb u'$ weakly in $L^2(S_N)$ and $u_n\to u$ in $L_{\text{loc}}^\infty(S_N)$, we have
$\|u'_n-u'\|_2^2=\|u'_n\|_2^2-\|u'\|_2^2+o(1)$ and $|(u_n-u)(0)|^q=o(1)$ as $n$ is large enough. Furthermore, owing to the Brezis-Lieb lemma \cite{BL},
\[
\|u_n\|_p^p=\|u_n-u\|_p^p+\|u\|_p^p+o(1).
\]
Using now \eqref{Fun-est}, \eqref{Fu-est} and \eqref{FpqBrezLieb}, we get
\begin{equation*}
\begin{split}
\F_{p,q}(\mu)&=\lim_n F_{p,q}(u_n)=\lim_n F_{p,q}(u_n-u)+F_{p,q}(u)\\
&>\frac{\mu-m}{\mu}\F_{p,q}(\mu)+\frac{m}{\mu}\F_{p,q}(\mu)=\F_{p,q}(\mu),
\end{split}
\end{equation*}
which is again a contradiction.

Henceforth, $m=\mu$ and $u\in\HmuS$. In particular, $u_n\rightarrow u$ in $L^2(S_{N})$ so that, $(u_n)$ being bounded in $L^\infty(S_{N})$, $u_n\rightarrow u$ in $L^p(S_{N})$ as $n\to+\infty$. Thus, by weak lower semicontinuity
\[
F_{p,q}(u)\leq\lim_n F_{p,q}(u_n)=\F_{p,q}(\mu)\,,
\]
that is $u$ is a ground state of $F_{p,q}$ at mass $\mu$.
\end{proof}
\begin{corollary}
\label{compactcor}
Let $\mu>0$ be fixed. If there exists $u\in H^1_\mu(S_{N})$ such that $F_{p,q}(u)\le \Eps(\mu)$, then ground states of \eqref{Fpq1} at mass $\mu$ exist.
\end{corollary}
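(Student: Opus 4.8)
The statement to prove is Corollary \ref{compactcor}: if there exists $u\in H^1_\mu(S_N)$ with $F_{p,q}(u)\le\Eps(\mu)$, then ground states of \eqref{Fpq1} at mass $\mu$ exist.

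The plan is to reduce this to Proposition \ref{compactth}, which already gives existence whenever the strict inequality $\F_{p,q}(\mu)<\Eps(\mu)$ holds. So there are essentially two cases. First, if the hypothesis gives $F_{p,q}(u)<\Eps(\mu)$ for some admissible $u$, then $\F_{p,q}(\mu)\le F_{p,q}(u)<\Eps(\mu)$, and Proposition \ref{compactth} immediately yields a ground state. The only remaining case is when $F_{p,q}(u)=\Eps(\mu)$ while no admissible function achieves a strictly smaller value, i.e. $\F_{p,q}(\mu)=\Eps(\mu)$ and the infimum is attained by $u$ itself.

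In that borderline case the argument is that $u$ is already a ground state: indeed $F_{p,q}(u)=\Eps(\mu)\ge\F_{p,q}(\mu)$ by \eqref{F leq E}, and on the other hand $F_{p,q}(u)\ge\F_{p,q}(\mu)$ trivially since $u\in H^1_\mu(S_N)$, so combining with $F_{p,q}(u)\le\Eps(\mu)=\F_{p,q}(\mu)$ forces $F_{p,q}(u)=\F_{p,q}(\mu)$. Hence $u$ is a minimizer, and a ground state exists. I would also note here that the hypothesis $F_{p,q}(u)\le\Eps(\mu)$ together with \eqref{F leq E} always forces $\F_{p,q}(\mu)=\Eps(\mu)$ in the non-strict case, so the dichotomy above is exhaustive.

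There is no real obstacle: the corollary is a direct logical consequence of Proposition \ref{compactth} and inequality \eqref{F leq E}, packaged to handle the equality case. The only point requiring a sentence of care is making sure the equality case is genuinely covered — i.e. that when the test function only achieves $\Eps(\mu)$ and not less, it is itself the minimizer — which follows from the chain $\F_{p,q}(\mu)\le F_{p,q}(u)\le\Eps(\mu)=\F_{p,q}(\mu)$. I would write the proof in three short lines: split on whether $F_{p,q}(u)<\Eps(\mu)$ or $F_{p,q}(u)=\Eps(\mu)$, invoke Proposition \ref{compactth} in the first case, and observe $u$ is itself a ground state in the second.
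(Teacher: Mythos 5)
Your argument is correct and is essentially the paper's own proof: a two-case dichotomy that invokes Proposition \ref{compactth} when the infimum lies strictly below $\Eps(\mu)$ and observes that $u$ itself attains $\F_{p,q}(\mu)$ in the borderline case, the only cosmetic difference being that the paper splits on $\F_{p,q}(\mu)=F_{p,q}(u)$ versus $\F_{p,q}(\mu)<F_{p,q}(u)\le\Eps(\mu)$ rather than on the comparison with $\Eps(\mu)$. Just make sure the final write-up reflects the body of your reasoning rather than your closing summary, since in the case $F_{p,q}(u)=\Eps(\mu)$ the function $u$ need not itself be a ground state when $\F_{p,q}(\mu)<\Eps(\mu)$ (then existence comes from Proposition \ref{compactth} instead).
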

\begin{proof}
If $\F_{p,q}(\mu)=F_{p,q}(u)$ then $u$ is a ground state at mass $\mu$. Otherwise, $\F_{p,q}(\mu)<F_{p,q}(u)\leq\Eps(\mu)$ and a ground state of \eqref{Fpq1} at mass $\mu$ exists by Proposition \ref{compactth}.
\end{proof}
Once existence of ground states is granted, the following proposition ensures uniqueness and provides a complete characterization of their symmetry properties.
\begin{proposition}
\label{GSsym}
Let $\mu>0$ be such that $\F_{p,q}(\mu)$ is attained. Then the unique positive ground state of $F_{p,q}$ at mass $\mu$ is the stationary state $\eta_0^{\omega(\mu)}$ such that $\eta_0^{\omega(\mu)}\in\HmuS$. 
\end{proposition}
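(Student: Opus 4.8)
The plan is to show that any positive ground state $u$ at mass $\mu$ must solve the Euler--Lagrange system \eqref{EulLag}, hence by Proposition \ref{prop_stat} it coincides with $\eta_J^{\omega}$ for some admissible $J$ and some $\omega>0$, with necessarily $\|\eta_J^\omega\|_2^2=\mu$; then by Corollary \ref{cor_stats} this $\omega$ is the unique one realizing mass $\mu$ for the index $J$. So the whole statement reduces to two points: (a) among the finitely many stationary states $\eta_J^{\omega(\mu,J)}$ at mass $\mu$, the one with $J=0$ has the strictly lowest energy $F_{p,q}$; and (b) a ground state, being positive and a constrained minimizer, is automatically a solution of \eqref{EulLag} with $\omega>0$ (the sign of the multiplier follows from testing the equation, exactly as in \eqref{nehariman}, together with $F_{p,q}(u)\le\F_{p,q}(\mu)<0$ — note $\F_{p,q}(\mu)\le\Eps(\mu)<0$ by Proposition \ref{compactth} and \eqref{E phi mu}).

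For point (b) the argument is standard: $u$ minimizes $F_{p,q}$ on the manifold $\{\|v\|_2^2=\mu\}$, so there is a Lagrange multiplier $\omega$ with $u''+|u|^{p-2}u=\omega u$ on each edge and the vertex condition $\sum_i u_i'(0^+)=-u(0)|u(0)|^{q-2}$; positivity of $u$ can be obtained first by replacing $u$ with $|u|$ (which does not increase $F_{p,q}$ and keeps the mass), and then strict positivity on the whole graph follows from the equation and the strong maximum principle on each half-line together with continuity at $\vv$ (if $u(0)=0$ one still gets each $u_i$ is either $\equiv 0$ or a translate of a soliton tail vanishing at $0$, but then the vertex condition forces $\sum u_i'(0^+)=0$ with all $u_i'(0^+)\ge 0$, hence $u\equiv 0$, contradicting the mass constraint; so $u(0)>0$). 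That $\omega>0$: multiply the equation by $u$, integrate, use the vertex condition to get $\|u'\|_2^2=\|u\|_p^p+|u(0)|^q-\omega\mu$, and combine with the Pohozaev-type identity \eqref{energycons} (valid for any solution of \eqref{EulLag}) exactly as in Corollary \ref{cor_stats} to produce \eqref{Fpqstationary}; since $F_{p,q}(u)=\F_{p,q}(\mu)<0$ and $\left(\frac{2}{p+2}-\frac1q\right)|u(0)|^q$ has a fixed sign, one reads off $\omega>0$ — or, more simply, if $\omega\le 0$ the only $H^1$ solution of $u''+|u|^{p-2}u=\omega u$ on a half-line decaying at infinity is $u\equiv 0$, again contradicting the mass.

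For point (a), the core comparison, I would compare $F_{p,q}(\eta_J^{\omega_J})$ across $J$, where $\omega_J=\omega(\mu,J)$ is fixed by $\|\eta_J^{\omega_J}\|_2^2=\mu$. Using \eqref{Fpqstationary}, $F_{p,q}(\eta_J^{\omega_J})=-\frac{6-p}{2(p+2)}\omega_J\mu+\left(\frac{2}{p+2}-\frac1q\right)|\eta_J^{\omega_J}(0)|^q$, so everything hinges on monotonicity in $J$ of $\omega_J$ and of $|\eta_J^{\omega_J}(0)|$. The key geometric fact is that $\eta_J^\omega$ is a soliton pushed \emph{towards} the vertex on $J$ edges and \emph{away} from it on $N-J$ edges; increasing $J$ at fixed $\omega$ increases the mass (more edges carry the "large" half of the soliton bump, compare \eqref{mass1}) and decreases $|\eta_J^\omega(0)|=\phi_\omega(a)$ because the balance equation \eqref{tanhJ2}/\eqref{ftJ} forces $t=\tanh((\frac p2-1)\sqrt\omega a)$ up as $N-2J$ decreases, i.e. $a$ up, i.e. $\phi_\omega(a)$ down. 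Hence at \emph{fixed mass} $\mu$, increasing $J$ forces $\omega_J$ to decrease (to compensate the mass), which by \eqref{Fpqstationary} makes the first term larger (less negative) and, since $|\eta_J^{\omega_J}(0)|$ decreases in $J$ as well, the second term becomes less negative too (recall $\frac{2}{p+2}-\frac1q<0$ because $q<4<\frac{2(p+2)}{?}$... rather: for $q\in(2,4)$ and $p\in(2,6)$ one has $\frac2{p+2}<\frac12<\frac1q$ when $q<2$, so one must check the sign carefully — this is where I expect a short but genuine computation). Therefore $J\mapsto F_{p,q}(\eta_J^{\omega_J})$ is strictly increasing, so $\eta_0^{\omega(\mu)}$ is the unique minimizer among stationary states, hence the unique ground state.

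The main obstacle is precisely establishing the strict monotonicity in $J$ at fixed mass rigorously: one must disentangle the coupled dependence of $\omega_J$ and $a_J$ (equivalently $t_J$) on $J$ through the two relations \eqref{mass1} and \eqref{ftJ}, and then control the sign of the resulting variation of \eqref{Fpqstationary}. I would do this by an explicit $2\times2$ comparison — fix $\mu$, write $\omega_J$ and $t_J$ as functions of $J$ treated as a continuous parameter, differentiate the mass constraint and \eqref{ftJ} in $J$, and show $\frac{d}{dJ}F_{p,q}(\eta_J^{\omega_J})>0$; the computations are of the same flavour as those already invoked from \cite[pp.\ 11--12]{BD} and from the proof of Proposition \ref{prop_stat}. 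A cleaner alternative, which I would try first, is to argue directly at the level of energy without identifying the multiplier: since $\eta_0^\omega$ is radial and the only stationary state with \emph{all} edges escaping the vertex, a rearrangement/symmetrization of any $\eta_J^\omega$ onto the "spider with one long leg" configuration strictly lowers the $H^1$-seminorm while preserving $L^p$ norms and the value at the vertex, along the lines of \eqref{Esn geq Er}; combined with a mass-rescaling this would show directly $F_{p,q}(\eta_0^{\omega(\mu)})<F_{p,q}(\eta_J^{\omega_J})$ for $J\ge1$, completing the proof.
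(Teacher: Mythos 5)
Your overall strategy --- reduce, via the Lagrange multiplier rule, to the finite family $\{\eta_J^{\omega_J}\}$ of stationary states at mass $\mu$ classified in Proposition \ref{prop_stat}, and then show that $J=0$ has strictly the lowest energy --- is genuinely different from the paper's. The paper never compares the $\eta_J$'s with $J\ge 1$ to $\eta_0$ at all: it first shows by rearrangement (Lemma \ref{rearrang}) that any nonnegative competitor can be replaced, without increasing the energy, by one that is either radial and decreasing or concentrated with its maximum away from the vertex on a single half--line, and then rules out the second configuration for a minimizer by the explicit mass--transfer variation of Lemma \ref{propervar}. At that point $\eta_0^{\omega(\mu)}$ is the only stationary state that is radial and non--increasing, and Corollary \ref{cor_stats} fixes $\omega$. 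Your step (b) (the ground state solves \eqref{EulLag} with $\omega>0$ and is positive) is fine and standard. The decisive step (a), however, which you yourself identify as the main obstacle, is not proved: the monotonicity of $J\mapsto F_{p,q}(\eta_J^{\omega_J})$ at fixed mass is left as a sketched computation, and the heuristics offered for it are in part incorrect.

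Concretely: the sign of $\frac{2}{p+2}-\frac{1}{q}$ in \eqref{Fpqstationary} is not fixed over the admissible range of exponents; it is negative precisely when $q<\frac{p}{2}+1$ and positive when $q>\frac{p}{2}+1$, i.e.\ it flips exactly at the threshold governing Theorems \ref{exoutdiag}--\ref{exdiag}. Hence in the regime $q>\frac{p}{2}+1$ your claim that the second term of \eqref{Fpqstationary} ``becomes less negative too'' as $J$ increases is backwards: there a decrease of $|\eta_J^{\omega_J}(0)|$ makes that term smaller, and the two terms genuinely compete, so no conclusion follows without a quantitative estimate. Moreover, the assertion that the mass increases in $J$ at fixed $\omega$ is not immediate from \eqref{mass1}, since $t$ also increases with $J$ through \eqref{ftJ} and the two effects pull in opposite directions (one has $\|\eta_J^\omega\|_2^2\propto N\int_t^1(1-s^2)^{\frac{4-p}{p-2}}ds+2J\int_0^t(1-s^2)^{\frac{4-p}{p-2}}ds$). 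Your fallback --- symmetrizing $\eta_J^{\omega}$ for $J\ge1$ --- is essentially the paper's route, but as stated it is incomplete: the rearranged function is not $\eta_0^{\omega(\mu)}$, so you would still need the dichotomy of Lemma \ref{rearrang}, the variational exclusion of the ``one long leg'' profile as in Lemma \ref{propervar}, and the fact that $\eta_0^{\omega(\mu)}$ minimizes among radial decreasing functions. In short, the reduction to stationary states is sound, but the key comparison is missing, and the direct comparison across $J$ appears substantially harder than the rearrangement argument actually used in the paper.
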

To prove this proposition we need two auxiliary lemmas.
\begin{lemma}
\label{rearrang}
Let $u\in H^1_\mu(S_N)$, $u\geq0$. Then there exists $u^*\in\HmuS$, $u^*\geq0$, such that $F_{p,q}(u^*)\le F_{p,q}(u)$ and either 
\begin{itemize}
	\item[(i)] $u^*$ is symmetric with respect to the vertex and monotonically decreasing on each half--line, or
	\item[(ii)] $u^*$ is symmetric with respect to the vertex and monotonically decreasing on $N-1$ half--lines of $S_N$, whereas on the remaining half--line it is non--decreasing from the origin to a unique maximum point and then non--increasing from this point to infinity.
\end{itemize}
 \end{lemma}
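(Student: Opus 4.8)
The plan is to reduce the doubly nonlinear functional to a single-variable rearrangement problem, handling the kinetic term, the bulk nonlinearity, and the pointwise term one at a time. First I would recall the two classical rearrangements available for functions on a star graph: the \emph{symmetric (monotone) rearrangement} $\widehat u$, which spreads $u$ onto $\R$ (i.e. onto two half--lines) as an even decreasing function with the same $L^p$ norms for all $p\ge 1$ and strictly smaller $L^2$ norm of the derivative unless $u$ already has this shape (as used in the proof of \eqref{Esn geq Er}), and the \emph{decreasing rearrangement on $S_N$}, which on each half--line replaces $u_i$ by a decreasing profile. The key point for us is that both rearrangements preserve the mass and the $L^p(S_N)$-norm, do not increase $\|u'\|_2$, and—crucially for the delta term—\emph{do not decrease the value at the vertex}: in fact the supremum $\|u\|_\infty$ is unchanged and is attained at $\vv$ after rearrangement. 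Since $q>2$, a larger (or equal) value $|u(\vv)|$ makes the term $-\tfrac1q|u(\vv)|^q$ smaller (or equal), so every rearrangement step can only decrease $F_{p,q}$.

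The construction of $u^*$ then proceeds as follows. Write $M:=\|u\|_{L^\infty(S_N)}$ and let $x_0\in S_N$ be a point where $|u|$ is close to its sup; more precisely, distinguish the two cases according to whether the maximum of $u$ on $S_N$ is attained at the vertex or in the interior of some half--line. Case (i): if $\max_{S_N} u = u(\vv)$, I would simply apply the decreasing rearrangement half--line by half--line, obtaining a function that is symmetric across $\vv$ (same decreasing profile on each edge), continuous at $\vv$ with value $u(\vv)$, with unchanged mass and $L^p$ norm and non-increased Dirichlet energy; continuity at the vertex is automatic because all $N$ profiles share the value $\max u = u(\vv)$. This gives alternative (i). Case (ii): if the maximum is attained at an interior point $x_0$ of, say, the first half--line, with $u(\vv)=c<M$, I would first rearrange $u$ on that half--line into a function that increases from the value $c$ at $\vv$ up to $M$ at some point and then decreases to $0$ (a ``single-bump'' profile pinned at the vertex value $c$)—this is the monotone rearrangement of $u_1$ relative to the boundary datum $c$, which preserves $\int|u_1|^p$ and does not increase $\int|u_1'|^2$ while keeping $u_1(\vv)=c$ fixed—and simultaneously rearrange $u$ on the remaining $N-1$ half--lines into the common decreasing profile with boundary value $c$ at $\vv$. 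The resulting $u^*$ is continuous at $\vv$ (all $N$ restrictions equal $c$ there), has the stated shape in alternative (ii), preserves mass and $L^p(S_N)$-norm, does not increase the kinetic energy, and leaves $u^*(\vv)=u(\vv)$ unchanged, so $F_{p,q}(u^*)\le F_{p,q}(u)$.

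The two technical facts I would invoke or verify are: (a) the decreasing rearrangement on a half--line $[0,\infty)$ with a prescribed boundary value decreases the Dirichlet energy and preserves all $L^p$ norms—this is the classical Polya--Szeg\H{o} inequality for decreasing rearrangements, and I would cite it (e.g. via \cite[Proposition 3.1]{ASTcpde} adapted to the half--line with boundary constraint, or directly through the standard one-dimensional theory); and (b) in Case (ii), the single-bump rearrangement of $u_1$ keeping the endpoint value $c$ fixed—this can be obtained by rearranging the super-level sets $\{u_1>t\}$ for $t>c$ into an interval (a bump) and the set $\{u_1\le t\}$, $t<c$, suitably, yielding a function that is non-decreasing then non-increasing. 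Since $u\ge 0$ is assumed, sign issues do not arise, and continuity of $u^*$ on $S_N$ reduces to continuity at $\vv$, which holds by construction. The main obstacle is the bookkeeping in Case (ii): one must make sure that the rearrangement on the distinguished half--line can be performed while strictly respecting the constraint $u^*_1(\vv)=u(\vv)$ and without creating extra bumps, and that the Polya--Szeg\H{o} inequality still applies in this constrained setting; once this ``pin the endpoint'' version of the rearrangement inequality is in place, the rest is a direct check that each of the three terms of $F_{p,q}$ moves in the right direction.
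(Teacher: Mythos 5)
Your strategy breaks down at the point where you rearrange the $N-1$ non-distinguished half--lines ``into the common decreasing profile with boundary value $c$ at $\vv$''. A measure--preserving monotone rearrangement of $u_i$ on a half--line necessarily starts at $\sup_{\HH_i}u_i$, and there is no ``pin the endpoint at a value below the sup'' version of it: if some $\HH_i$ with $i\geq2$ carries values exceeding $c=u(\vv)$ (which is perfectly possible when the global maximum sits in the interior of $\HH_1$), the decreasing rearrangement of $u_i$ takes the value $\sup_{\HH_i}u_i>c$ at the vertex and continuity at $\vv$ is destroyed. Moreover, even when this obstruction is absent, edge--by--edge rearrangement produces $N-1$ \emph{different} decreasing profiles (one per distribution function), not a common one, so the symmetry required in alternatives (i) and (ii) is not obtained; the same objection applies to your case (i).

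The paper's proof avoids both problems by rearranging \emph{globally} rather than edge by edge. When $\|u\|_\infty$ is attained on every half--line, every value of $u$ has at least $N$ preimages on $S_N$, so the $N$--fold symmetric rearrangement on the star graph (as in \cite[Appendix A]{acfn_jde}) applies and directly yields the radial decreasing competitor of alternative (i). Otherwise, when $u(0)>0$, one splits the graph by the level $u(0)$: every value in $(u(0),\|u\|_\infty)$ is attained at least twice \emph{somewhere on $S_N$}, so the whole super--level set $\{u>u(0)\}$ — including any mass above level $u(0)$ living on the other half--lines — is collected into a single symmetric bump placed on one edge, while the sub--level part, whose values are attained at least $N$ times (once per half--line), undergoes the $N$--fold star rearrangement and glues continuously at the common value $u(0)$. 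The case $u(0)=0$ is handled separately by collapsing everything onto one half--line. If you want to salvage your approach, you must replace the per--edge rearrangements by these global ones; as written, the construction in your case (ii) does not produce an admissible competitor.
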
 
\begin{proof}
Given a non-negative function $u\in H^1_\mu(S_N)$, suppose first that $\|u\|_\infty$ is attained at least once on every half--line of $S_N$ (this hypothesis includes the particular case in which the maximum is attained at the origin). Then all the values in the image of $u$ are attained at least $N$ times on the graph. Thus, letting $u^*\in\HmuS$ be the symmetric rearrangement of $u$ on $S_N$ as defined in \cite[Appendix A]{acfn_jde}, standard properties of rearrangements give
\[
\|u'\|_2\geq\|(u^*)'\|_2\,,\qquad\|u\|_p=\|u^*\|_p\,,\qquad|u^*(0)|=\|u\|_\infty\,,
\]
the inequality being strict unless $u$ is symmetric with respect to the vertex and monotonically decreasing on each half--lines. Hence, $F_{p,q}(u^*)\leq F_{p,q}(u)$ and $u^*$ is as in \emph{(i)}.

Assume now that $\|u\|_\infty$ is not attained on every half--line of the graph. Let us discuss separately the cases $u(0)=0$ and $u(0)>0$.

If $u(0)=0$, then let $\|u\|_\infty$ be attained at $x_{1}\in \HH_{1}$ and denote by $I$ the interval connecting the origin to $x_{1}$ along $\HH_1$. Consider the following construction. 

First, let $\widetilde{u}\in H^1(0,x_1)$ be the monotone rearrangement of the restriction $u_{|_I}$ of $u$ to $I$. By definition of monotone rearrangement and the P\'olya--Szeg\H{o} inequality, we have $\tilde{u}(0)=\|u\|_\infty$, $\tilde{u}(x_{1})=0$ and
\[
\|u'\|_{L^2(I)}\geq\|\widetilde{u}'\|_{L^2(0,x_1)}\,,\qquad\|u\|_{L^p(I)}=\|\widetilde{u}\|_{L^p(0,x_1)}\quad\quad\forall p\geq1\,.
\]
Secondly, let $\bar{u}\in H^1(\R^+)$ be the monotone rearrangement of the restriction $u_{|_{{S_{N}\setminus I}}}$ of $u$ to $S_N\setminus I$, so that $\bar{u}(0)=\|u\|_\infty$ and, by usual estimates that hold for monotone rearrangements,
\[
\|u'\|_{L^2(S_N\setminus I)}\geq\|\bar{u}'\|_{L^2(\R^+)}\,,\qquad\|u\|_{L^p(S_N\setminus I)}=\|\bar{u}\|_{L^p(\R^+)}\quad\quad\forall p\geq1\,.
\]
Define then $u^*:S_N\to\R$ to be
 \begin{equation*}
 u^{*}(x):=
 \begin{cases}
 \tilde{u}(x_{1}-x)\quad&x\in I,\\
 \bar{u}(x-x_{1}) &x\in \HH_{1}\setminus I,\\
 0 &\text{otherwise.}
 \end{cases}
 \end{equation*} 
By construction, it follows that $u^*\in\HmuS$, $u^*$ is as in \emph{(ii)} and $F_{p,q}(u^*)\leq F_{p,q}(u)$.
  
To conclude, it remains to deal with the case $u(0)>0$. Given this, let $J:=\{x\in S_N\,:\,u(x)>u(0)\}$ and let $u_{|_J}$ be the restriction of $u$ to $J$. Note that $u(J)=(u(0),\|u\|_\infty]$ is connected and every $t\in u(J)$ is attained at least twice on $S_N$, except possibly $\|u\|_\infty$. Hence, denoting by $\widehat{u}\in H^1(-L,L)$ the symmetric rearrangement of $u_{|_J}$ on the interval $(-L,L)$, with $L:=\f{|J|}2$, we have (see \cite[Proposition 3.1]{AST}) 
\[
\|u'\|_{L^2(J)}\geq\|\widehat{u}'\|_{L^2(-L,L)}\,,\qquad\|u\|_{L^p(J)}=\|\widehat{u}\|_{L^p(-L,L)}\quad\forall p\geq1,\qquad\widehat{u}(0)=\|u\|_\infty\,.
\]
Similarly, $u(S_N\setminus J)\subseteq[0,M]$ is connected and every value $t\in S_N\setminus J$ is attained at least $N$ times (once on each half--line). Therefore, letting $u^\dagger\in H^1(S_N)$ be the symmetric rearrangement on $S_N$ of $u_{\mid S_N\setminus I}$ as in \cite[Appendix A]{acfn_jde}, we get
\[
\|u'\|_{L^2(S_N\setminus J)}\geq\|(u^\dagger)'\|_{L^2(S_N)}\,,\qquad\|u\|_{L^p(S_N\setminus J)}=\|u^\dagger\|_{L^p(S_N)}\quad p\geq1,\qquad u^\dagger(0)=u(0)\,.
\]
Let then $I$ be the interval $[0,2L]$ along $\HH_{1}$, and set $u^*:S_N\to\R$ to be
\[
u^*(x):=\begin{cases}
\widehat{u}(x-L) & x\in I\\
u^\dagger(x-2L) & x\in\HH_{1}\setminus I\\
u^\dagger(x) & \text{otherwise}\,.
\end{cases}
\]
By construction, $u^{*}\in H^{1}_{\mu}(S_{N})$, $u^*$ is as in \emph{(ii)} and $F_{p,q}(u^*)\leq F_{p,q}(u)$, so that the proof is complete.
\end{proof}
\begin{lemma}
\label{propervar}
Consider $u\in H^1_\mu(S_N)$ that does not have a local maximum point at the origin. Then there exists $v\in H^1_\mu(S_N)$ such that $F_{p,q}(v)<F_{p,q}(u)$.
\end{lemma}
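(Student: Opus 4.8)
The plan is to lower $F_{p,q}$ by moving mass of $u$ towards the vertex so as to raise $|u(\vv)|$ — hence decrease $-\tfrac1q|u(\vv)|^q$ — while keeping the mass equal to $\mu$ and controlling the changes of $\tfrac12\|u'\|_2^2$ and $\tfrac1p\|u\|_p^p$; in the regime where this is not advantageous, one uses instead a truncated soliton sitting far from the vertex.

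\emph{Reductions.} Since $F_{p,q}(|u|)=F_{p,q}(u)$ we may assume $u\ge0$; set $c:=u(\vv)$. If $c=0$ the statement is immediate: by \eqref{Esn geq Er}, $F_{p,q}(u)=E(u,S_N)>\Eps(\mu)\ge\F_{p,q}(\mu)$, so $F_{p,q}(u)$ is strictly above the infimum and some $v\in\HmuS$ with $F_{p,q}(v)<F_{p,q}(u)$ exists by definition of $\F_{p,q}(\mu)$. Assume $c>0$. Now apply Lemma \ref{rearrang}: it gives $u^*\ge0$ in $\HmuS$ with $F_{p,q}(u^*)\le F_{p,q}(u)$; if the inequality is strict we are done with $v:=u^*$. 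Otherwise $u=u^*$ is of type (i) or (ii) in that lemma; type (i) is radial and decreasing on every half-line, hence has a local — in fact global — maximum at $\vv$, which is excluded; so $u$ is of type (ii): equal on $N-1$ half-lines, on which it is non-increasing from $c$ at $\vv$ down to $0$, and on the remaining one $\mathcal H_1$ non-decreasing from $c$ at $\vv$ up to a unique maximum $M:=\|u\|_\infty>c$ at some $\bar x>0$, then non-increasing to $0$. For $N=2$, identifying $S_2$ with $\R$ one concludes at once: the symmetric rearrangement $\widehat u$ of $u$ on $\R$ satisfies $\|\widehat u'\|_2\le\|u'\|_2$ (P\'olya--Szeg\H o on $\R$, no multiplicity condition), $\|\widehat u\|_p=\|u\|_p$ and $\widehat u(\vv)=M>c$ strictly, so $F_{p,q}(\widehat u)<F_{p,q}(u)$. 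Hence from now on $N\ge3$ and $u$ is of type (ii).

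\emph{First competitor: escape the vertex.} If $F_{p,q}(u)>\Eps(\mu)$, then $\F_{p,q}(\mu)\le\Eps(\mu)<F_{p,q}(u)$ and the existence of $v\in\HmuS$ with $F_{p,q}(v)<F_{p,q}(u)$ is immediate from the definition of $\F_{p,q}(\mu)$ (concretely, a truncated translated soliton supported far out on a half-line, of energy arbitrarily close to $\Eps(\mu)$, as in the proof of Proposition \ref{compactth}). We are thus left with the case $F_{p,q}(u)\le\Eps(\mu)$, i.e. $E(u,S_N)-\Eps(\mu)\le\tfrac1q c^q$: by \eqref{Esn geq Er} the excess of $u$ over the soliton is then squeezed between $0$ and $\tfrac1q c^q$.

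\emph{Second competitor: climb to the vertex, the crux.} For such a $u$ I would fold the part of $u_1$ exceeding $c$ onto a symmetric layer at $\vv$. Let $\beta>\bar x$ be the point with $u_1(\beta)=c$ (so $u_1>c$ on $(0,\beta)$), let $g$ be the decreasing rearrangement of $u_1|_{(0,\beta)}$ onto $[0,\beta]$, so that $g(0)=M$, $g(\beta)=c$, $\int_0^\beta g^r=\int_0^\beta u_1^r$ for every $r\ge1$, and — since $u_1|_{(0,\beta)}$ is a single hump, each level being attained twice — $\|g'\|_{L^2(0,\beta)}^2\le\tfrac14\|u_1'\|_{L^2(0,\beta)}^2$ (a sharpened P\'olya--Szeg\H o bound, via the arithmetic--harmonic mean inequality applied to the two preimages of each level). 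Set, on each half-line $\mathcal H_i$,
\[
v_i(x):=\begin{cases} g(Nx), & 0\le x\le \beta/N,\\[2pt] u_1\!\big(x+\tfrac{N-1}{N}\beta\big), & x> \beta/N,\ i=1,\\[2pt] u_i\!\big(x-\tfrac{\beta}{N}\big), & x> \beta/N,\ i\ge2.\end{cases}
\]
A direct check shows $v\in\HmuS$ (continuous at $\vv$, with $v(\vv)=M$, and at $x=\beta/N$), with $\|v\|_2^2=\|u\|_2^2$ and $\|v\|_p^p=\|u\|_p^p$ (the $N$ rescaled copies of $g$ replace exactly the mass and the $L^p$-content of $u_1|_{(0,\beta)}$), and
\[
F_{p,q}(v)-F_{p,q}(u)=\tfrac12\Big(N^2\|g'\|_{L^2(0,\beta)}^2-\|u_1'\|_{L^2(0,\beta)}^2\Big)-\tfrac1q\big(M^q-c^q\big).
\]
The hard part — the real obstacle — is to show that, in the regime $E(u,S_N)-\Eps(\mu)\le\tfrac1q c^q$ isolated above, the first term is strictly smaller than $\tfrac1q(M^q-c^q)$: this calls for bounding the Dirichlet content $\|u_1'\|_{L^2(0,\beta)}^2$ of the hump by a multiple of the excess $E(u,S_N)-\Eps(\mu)$ (comparing $u$ with its symmetric rearrangement on $\R$, whose energy is $\ge\Eps(\mu)$, and quantifying the P\'olya--Szeg\H o defect), and then combining this with $\|g'\|^2\le\tfrac14\|u_1'\|^2_{L^2(0,\beta)}$ and the elementary comparison between $M^q-c^q$ and $M-c$ to close the inequality. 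Handling very steep humps together with the factor $N^2$ is exactly what forces the escape/climb dichotomy above, and is what, in the borderline case $q=\tfrac p2+1$, produces the critical number of half-lines $N_p$ of Theorem \ref{exdiag}.
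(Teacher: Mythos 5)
Your argument does not close: the step you yourself label ``the hard part --- the real obstacle'' is precisely the content of the lemma in the only case that remains after your reductions, and it is left unproved. Concretely, for $N\ge3$ your folding construction changes the energy by
\[
F_{p,q}(v)-F_{p,q}(u)=\tfrac12\bigl(N^2\|g'\|_{L^2(0,\beta)}^2-\|u_1'\|_{L^2(0,\beta)}^2\bigr)-\tfrac1q\bigl(M^q-c^q\bigr),
\]
and even with the sharpened bound $\|g'\|^2\le\tfrac14\|u_1'\|^2$ the kinetic term carries the factor $N^2/4>1$, so the folding \emph{increases} the kinetic energy by an amount proportional to $\|u_1'\|_{L^2(0,\beta)}^2$, which is not controlled by $M$ or $c$ (think of a narrow spike). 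To beat it by $\tfrac1q(M^q-c^q)$ you would need a quantitative stability estimate bounding the Dirichlet content of the hump by the excess $E(u,S_N)-\Eps(\mu)$ (a quantitative P\'olya--Szeg\H{o} deficit), which is a substantial piece of analysis that you only gesture at; and since the whole point of Theorems \ref{exoutdiag}--\ref{exdiag} is that the comparison between a vertex gain of order $|u(0)|^q$ and a kinetic/nonlinear cost can go either way depending on $p,q,N,\mu$, there is no reason to expect this particular global competitor to win unconditionally. A secondary slip: from $F_{p,q}(u^*)=F_{p,q}(u)$ in Lemma \ref{rearrang} you cannot conclude $u=u^*$ (equality is not characterized there), so the reduction to a type-(ii) profile is also not justified as written. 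Your treatment of $u(0)=0$, of the case $F_{p,q}(u)>\Eps(\mu)$, and of $N=2$ is fine, but these are the easy branches.

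The paper's proof is local and avoids all of this. One groups two half--lines into a line $\HH_1\cup\HH_2$ through the vertex and considers the one--parameter family $u_\nu$ obtained by transferring mass $\nu$ between $\HH_1\cup\HH_2$ and the remaining edges via $L^2$--rescaling, restoring continuity at $\vv$ by a small shift $T(\nu)$ of the profile along the line --- a shift that is available exactly because $\vv$ is not a local maximum point of $u$. Along this family the kinetic term is affine in $\nu$ while $-\tfrac1p\|u_\nu\|_p^p$ and $-\tfrac1q|u_\nu(0)|^q$ are strictly concave, so $\tfrac{d^2}{d\nu^2}F_{p,q}(u_\nu)|_{\nu=0}<0$ and some small $\nu\neq0$ strictly lowers the energy, unconditionally in $p,q,N,\mu$. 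You should replace the folding step by a perturbation of this kind (or prove the missing quantitative estimate, which would be a much longer detour).
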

\begin{proof}
Given $u\in\HmuS$, let $\overline{u}$ be the restriction of $u$ to $\HH_1\cup\HH_2$ and $\widetilde{u}$ be the restriction of $u$ to $S_N\setminus(\HH_{1}\cup\HH_2)$. Set also
\begin{equation*}
\overline{\mu}:=\int_{\HH_1\cup\HH_2}\abs{\overline{u}}^2\,dx,\quad \widetilde{\mu}:=\int_{S_N\setminus(\HH_{1}\cup\HH_2)}\abs{\widetilde{u}}^2\,dx.
\end{equation*}
For $\varepsilon>0$ small enough, let $\nu\in(-\varepsilon,\varepsilon)$. Since $u$ has no maximum point at the origin, define
\begin{equation*}
u_\nu(x):=\begin{cases}
\sqrt{\frac{\overline{\mu}+\nu}{\overline{\mu}}}\overline{u}(x+T(\nu)) & x\in\HH_{1}\cup\HH_2\\
\sqrt{\frac{\widetilde{\mu}-\nu}{\widetilde{\mu}}}\widetilde{u}(x) & x\in S_N\setminus(\HH_1\cup\HH_2)\,,
\end{cases} 
\end{equation*}
where the shift $T(\nu)$ is such that $T(0)=0$ and $u_\nu$ is continuous at the origin. One has $\|u_\nu\|_2^2=\mu$ for every $\nu$ and
\begin{equation*}
\frac{d^2}{d\nu^2}F_{p,q}(u_\nu)\Big|_{\nu=0}=-\frac{p}{2}\left(\frac{p}{2}-1\right)\int_{S_N}\abs{u}^p\,dx-\frac{q}{2}\left(\frac{q}{2}-1\right)\abs{u(0)}^q<0,
\end{equation*}
so that, choosing $\varepsilon$ small enough and setting $v:=u_{\nu}$ for any $\nu\in(-\varepsilon,\varepsilon)$, we conclude.
\end{proof}
\begin{proof}[Proof of Proposition \ref{GSsym}]
Note that, by Lemma \ref{rearrang}--\ref{propervar}, any ground state $u\in\HmuS$ of $F_{p,q}$ at mass $\mu$ has to be symmetric with respect to the origin and non--increasing on each half--line. Since the unique solution to \eqref{EulLag} fulfilling these properties is $\eta_0^\omega$ and given that, by Corollary \ref{cor_stats}, there exists a unique $\omega>0$ in \eqref{EulLag} for which $\eta_0^\omega$ belongs to $\HmuS$, the proof is complete.
\end{proof}
Note that, whenever $\F_{p,q}(\mu)$ is attained, Proposition \ref{GSsym} entails
\[
\F_{p,q}(\mu)=\inf_{v\in H^1_{\mu, \text{rad}}(S_N)} F_{p,q}(v)
\]
where $H^{1}_{\mu,\text{rad}}=\big\{v\in H^{1}_{\mu}(S_{N})\,:\,v\,\, \text{is symmetric with respect to the origin of }S_N\big\}$. We conclude this section with the next proposition concerning radial ground states of $F_{p,q}$, establishing some properties that will be useful in what follows.
\begin{proposition}
	\label{prop_radial} For every $\mu>0$, the minimization problem
	\begin{equation}
	\label{radinf}
	\F_{p,q}^{\text{rad}}(\mu):=\inf_{v\in H^1_{\mu, \text{rad}}(S_N)} F_{p,q}(v)
	\end{equation}
	is always attained by the unique stationary state $\eta_0^{\omega(\mu)}$.
\end{proposition}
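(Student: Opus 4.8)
\emph{Strategy.} The plan is to apply the direct method in the radial class, after first reducing to monotone profiles. Every $v\in H^1_{\mu,\text{rad}}(S_N)$ is determined by its common half--line restriction, and since all values of a radial function are attained at least $N$ times on $S_N$, case (i) of Lemma \ref{rearrang} applies and produces a radial, decreasing $v^*\in H^1_{\mu,\text{rad}}(S_N)$ with $F_{p,q}(v^*)\le F_{p,q}(v)$. Hence $\F_{p,q}^{\text{rad}}(\mu)$ equals the infimum of $F_{p,q}$ over the subclass of nonnegative, radial, decreasing functions, and any minimiser produced there is a minimiser over $H^1_{\mu,\text{rad}}(S_N)$. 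A scaling computation on the family $v_\lambda(x)=\lambda^{1/2}v(\lambda x)$ (with $v(0)\neq0$ fixed) shows moreover that $\F_{p,q}^{\text{rad}}(\mu)<0$ for every $\mu>0$, since the pointwise term scales like $\lambda^{q/2}$ with $q/2\in(1,2)$ and thus dominates the $\lambda^2$ kinetic term as $\lambda\to0^+$.

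\emph{Compactness.} I would then take a nonnegative, radial, decreasing minimising sequence $(u_n)$. Inserting \eqref{GNp}--\eqref{GNinf} into $F_{p,q}$ as in the proof of Proposition \ref{compactth} shows $(u_n)$ is bounded in $H^1(S_N)$, so, up to subsequences, $u_n\deb u$ in $H^1(S_N)$ and $u_n\to u$ in $L^\infty_{loc}(S_N)$, with $u$ again nonnegative, radial and decreasing. The decisive point, which is what makes \eqref{radinf} behave better than \eqref{infFpq}, is that mass cannot escape: since each $u_n$ is decreasing with total mass $\mu$, on every half--line $u_n(x)^2x\le\mu/N$, and likewise for $u$, so for every $R>0$ one has $\sup_{\{x\ge R\}}|u_n-u|\le 2\sqrt{\mu/(NR)}$ while $\sup_{\{x\le R\}}|u_n-u|\to0$; hence $\nL{u_n-u}{\infty}\to0$, which together with $\nL{u_n}{2}^2=\mu$ gives $\nL{u_n}{p}^p\to\nL{u}{p}^p$ and $u_n(0)\to u(0)$. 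By weak lower semicontinuity of $v\mapsto\nL{v'}{2}$ this already yields $\F_{p,q}^{\text{rad}}(\mu)=\lim_n F_{p,q}(u_n)\ge F_{p,q}(u)$. If $u\equiv0$, then $F_{p,q}(u_n)\to0>\F_{p,q}^{\text{rad}}(\mu)$, a contradiction; if $m:=\nL{u}{2}^2\in(0,\mu)$, then, exactly as in Proposition \ref{compactth}, $F_{p,q}\big(\sqrt{\mu/m}\,u\big)<\tfrac\mu m F_{p,q}(u)$ because $p,q>2$, so $F_{p,q}(u)>\tfrac m\mu\F_{p,q}^{\text{rad}}(\mu)\ge\F_{p,q}^{\text{rad}}(\mu)$ (using $\F_{p,q}^{\text{rad}}(\mu)<0$), again contradicting the previous inequality. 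Hence $m=\mu$, $u_n\to u$ strongly in $L^2(S_N)$, $u\in H^1_{\mu,\text{rad}}(S_N)$, and $F_{p,q}(u)\le\F_{p,q}^{\text{rad}}(\mu)$: $u$ is a minimiser.

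\emph{Identification.} As a constrained minimiser of $F_{p,q}$ in $H^1_{\mu,\text{rad}}(S_N)$, $u$ solves the Euler--Lagrange system \eqref{EulLag} for some $\omega$ --- radial variations suffice, since for a radial function \eqref{EulLag} amounts to $u''+u^{p-1}=\omega u$ on $\R^+$ together with $Nu'(0^+)=-u(0)^{q-1}$ --- and it is nonnegative and decreasing, so by Proposition \ref{prop_stat} it must be $\eta_0^\omega$, and by Corollary \ref{cor_stats} the admissible multiplier is $\omega(\mu)$, i.e. $u=\eta_0^{\omega(\mu)}$. Uniqueness up to a constant phase follows in the usual way: if $w$ is any minimiser, $|w|$ is a nonnegative minimiser whose rearrangement (by the first paragraph) equals $\eta_0^{\omega(\mu)}$, and equality in $F_{p,q}(|w|^*)=F_{p,q}(|w|)$ forces both equality in the P\'olya--Szeg\H{o} inequality and $\nL{w}{\infty}=|w(0)|$, so $|w|$ is itself radial and decreasing, whence $|w|=\eta_0^{\omega(\mu)}$.

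\emph{Main obstacle.} The crux is the compactness step, namely ruling out vanishing and dichotomy for \emph{every} $\mu$: for the unconstrained problem \eqref{infFpq} this may genuinely fail, and the whole point is that the monotonicity bound $u_n(x)\le\sqrt{\mu/(Nx)}$ prevents a radial decreasing profile from carrying a fixed amount of mass to infinity other than by spreading it out, so that its kinetic and $L^p$ contributions vanish in the limit. Once this is in place, the strict inequality from $p,q>2$ closes the sub--mass case as in Proposition \ref{compactth}, and the identification is immediate from Proposition \ref{prop_stat}. (When $\F_{p,q}(\mu)<\Eps(\mu)$ one may alternatively invoke Propositions \ref{compactth} and \ref{GSsym} directly, since then a ground state exists, is radial, and hence realises $\F_{p,q}^{\text{rad}}(\mu)$; the argument above, however, treats all $\mu$ at once.)
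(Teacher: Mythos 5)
Your proof is correct, and its compactness step takes a genuinely different --- and in fact more direct --- route than the paper's. The paper establishes $\F_{p,q}^{\text{rad}}(\mu)<0$ with an explicit test function built from soliton restrictions, and then handles the case $0<m<\mu$ by the Brezis--Lieb splitting $F_{p,q}(u_n)=F_{p,q}(u_n-u)+F_{p,q}(u)+o(1)$, which forces it to rearrange $u_n-u$ back into the radial monotone class before applying the scaling inequality to both pieces. You instead obtain negativity from the mass-preserving dilation $v_\lambda(x)=\lambda^{1/2}v(\lambda x)$ (legitimate, since $q/2<2$ and $p/2-1<2$), and, more importantly, you exploit the monotonicity bound $u_n(x)\le\sqrt{\mu/(Nx)}$ to upgrade $L^\infty_{loc}$ convergence to global uniform convergence, hence strong $L^p(S_N)$ convergence by interpolation against the bounded $L^2$ norms. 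This makes $F_{p,q}$ lower semicontinuous along the sequence outright, so the sub-mass case is excluded by the single inequality $F_{p,q}(u)>\frac m\mu\F_{p,q}^{\text{rad}}(\mu)\ge\F_{p,q}^{\text{rad}}(\mu)$ applied to the limit alone, with no decomposition of $u_n-u$ and no auxiliary rearrangement. What the paper's route buys is uniformity with the scheme already set up in Proposition \ref{compactth}; what yours buys is the elimination of the Brezis--Lieb step. The identification step (symmetric criticality reducing the radial Euler--Lagrange equations to \eqref{EulLag}, then Proposition \ref{prop_stat} and Corollary \ref{cor_stats}) coincides with the paper's. One small point worth making explicit if you write this up: the weak limit $u$ inherits radial symmetry and monotonicity from a.e.\ convergence, which is what licenses both the tail bound for $u$ itself and the admissibility of $\sqrt{\mu/m}\,u$ as a competitor in the radial class.
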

\begin{proof}
	First notice that
	\begin{equation}
		\label{neginf}
		\F_{p,q}^{\text{rad}}(\mu)<0
	\end{equation}
	for every $\mu>0$. Indeed, let us denote by $\varphi\in H^1(\R^+)$ the restriction to $\R^+$ of the soliton $\phi_{\omega\left(\f{2\mu}{N}\right)}$ of mass $\f{2\mu}{N}$, so that $\|\varphi\|_{L^2(\R^+)}^2=\f\mu N$. Setting $v:\equiv\varphi$ on each half--line of $S_N$, we get $v\in H_{\mu,\text{rad}}^1(S_N)$ and 
	\[
	\F_{p,q}^{\text{rad}}(\mu)\leq F_{p,q}(v)<NE(\varphi,\R^+)=-\left(\f2N\right)^{2\beta}\theta_p\mu^{2\beta+1}<0\,.
	\]
	Moreover, the minimization problem \eqref{radinf} is equivalent to minimize $F_{p,q}$ among all functions $u\in H_{\mu,\text{rad}}^1(S_N)$ non--increasing on each half--line. Indeed, arguing as in the first part of the proof of Lemma \ref{GSsym}, it is possible to construct $u^*\in H_{\mu,\text{rad}}^1(S_N)$, monotonically decreasing on each half--line and such that $F_{p,q}(u^*)<F_{p,q}(u)$.
	
	Therefore, let $(u_n)\subset H_{\mu,\text{rad}}^1(S_N)$ be a minimizing sequence for \eqref{radinf} and, due to Lemma \ref{propervar}, assume without loss of generality that $\|u_n\|_\infty=u_n(0)$ and $u_n$ is non--increasing on each half--line. By Gagliardo--Nirenberg inequalities \eqref{GNp}--\eqref{GNinf} it follows that $(u_n)$ is bounded in $H^1(S_N)$, so that $u_n\rightharpoonup u$ in $H^1(S_N)$ and $u_n\to u$ in $L_{\text{loc}}^\infty(S_N)$, for some $u\in H^1(S_N)$.
	
	Assume by contradiction that $u\equiv0$ on $S_N$. Then $u_n\to0$ in $L_{\text{loc}}^\infty(S_N)$, that is $u_n\to0$ in $L^\infty(S_N)$ since $u_n$ attains its $L^\infty$ norm at the origin. Thus $u_n\to0$ strongly in $L^p(S_N)$ and by weak lower semicontinuity
	\[
	\F_{p,q}^{\text{rad}}=\lim_n F_{p,q}(u_n)\geq0
	\]
	which is impossible by \eqref{neginf}. Thus $u\not\equiv0$ on $S_N$. 
	
	Let then $m:=\|u\|_2^2$, so that $\|u_n-u\|_2^2\to\mu-m$ as $n\to+\infty$ by weak convergence of $u_n$ to $u$ in $L^2(S_N)$, and assume by contradiction that $0<m<\mu$. 
	
	Since $u_n\in H_{\mu,\text{rad}}^1(S_N)$ is non--increasing on each half--line, then for every $n$ we have that $u_n-u$ is symmetric with respect to the origin and $(u_n-u)(0)\to0$ as $n\to+\infty$ since $u_n\to u$ in $L_{\text{loc}}^\infty(S_N)$. Hence, one can argue again as in the first part of the proof of Lemma \ref{GSsym} to construct $(u_n-u)^*\in H^1(S_N)$ symmetric with respect to the origin, non--increasing on each half--line and such that $\|(u_n-u)^*\|_2=\|u_n-u\|_2$ and $F_{p,q}((u_n-u)^*)\leq F_{p,q}(u_n-u)$.
	
	Thus, following the steps in the proof of Proposition \ref{compactth}, we have
	\[
	\begin{split}
	\F_{p,q}^{\text{rad}}(\mu)&\leq F_{p,q}\left(\sqrt{\f\mu{\|(u_n-u)^*\|_2^2}}\,(u_n-u)^*\right)\\
	&<\f{\mu}{\|(u_n-u)^*\|_2^2}F_{p,q}((u_n-u)^*)\leq\f{\mu}{\|u_n-u\|_2^2}F_{p,q}(u_n-u)\,,
	\end{split}
	\]
	that is
	\[
	\liminf_n F_{p,q}(u_n-u)\geq\f{\mu-m}\mu\F_{p,q}^{\text{rad}}(\mu)\,.
	\]
	Similarly, since $u\in H_{m,\text{rad}}^1(S_N)$,
	\[
	F_{p,q}(u)>\f m\mu \F_{p,q}^{\text{rad}}(\mu)\,.
	\]
	Relying again on $u_n'\rightharpoonup u'$ in $L^2(S_N)$, $u_n\to u$ in $L_{\text{loc}}^\infty(S_N)$ and on the Brezis--Lieb lemma \cite{BL} as in the proof of Proposition \ref{compactth}, we obtain
	\[
	\begin{split}
	\F_{p,q}^{\text{rad}}(\mu)=&\lim_n F_{p,q}(u_n)=\lim_n F_{p,q}(u_n-u)+F_{p,q}(u)\\
	>&\f{\mu-m}{\mu}\F_{p,q}^{\text{rad}}(\mu)+\f m\mu\F_{p,q}^{\text{rad}}(\mu)=\F_{p,q}^{\text{rad}}(\mu)\,,
	\end{split}
	\] 
	i.e. a contradiction. Henceforth, $u\in H_{\mu,\text{rad}}^1(S_N)$ and by weak lower semicontinuity $F_{p,q}(u)=\F_{p,q}^{\text{rad}}(\mu)$. In particular, $u$ is solution of \eqref{EulLag} for some $\omega>0$, so that it must coincide with the unique solution $\eta_0^{\omega(\mu)}$ to \eqref{EulLag} that verifies $\|\eta_0^{\omega(\mu)}\|_2^2=\mu$. 
\end{proof}

\section{Proof of Theorem \ref{exoutdiag}--\ref{exdiag} and of Proposition \ref{prop_Np}}
\label{sec:main}
This section is devoted to the details of the proof of the main results of the paper.

We begin with the following preliminary lemma.

\begin{lemma}
	\label{lem_crit}
	Let $\overline{\mu}>0$ be given. 
	\begin{itemize}
		\item[(i)] If $q<\f p2+1$ and ground states of $F_{p,q}$ exist at mass $\overline{\mu}$, then, for every $\mu\leq\overline{\mu}$, ground states at mass $\mu$ exist too.
		\item[(ii)] if $q>\f p2+1$ and ground states of $F_{p,q}$ exist at mass $\overline{\mu}$, then, for every $\mu\geq\overline{\mu}$, ground states at mass $\mu$ exist too.
	\end{itemize} 
\end{lemma}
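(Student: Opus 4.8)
The plan is to reduce both items to producing, at the target mass $\mu$, one admissible function of energy at most $\Eps(\mu)$, and then to invoke Corollary~\ref{compactcor}. The good competitor is a rescaling of a ground state at mass $\overline\mu$, and the right rescaling is precisely the one that sends the soliton of mass $\overline\mu$ to the soliton of mass $\mu$, i.e.\ the one governed by the exponents $\alpha=\tfrac{2}{6-p}$, $\beta=\tfrac{p-2}{6-p}$ of \eqref{phi mu}.

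First I would fix, using the hypothesis, a ground state $\overline u\in H^1_{\overline\mu}(S_N)$ at mass $\overline\mu$, so that $F_{p,q}(\overline u)=\F_{p,q}(\overline\mu)\le\Eps(\overline\mu)$ by \eqref{F leq E}. Then, with $t:=\mu/\overline\mu$, I would set $w(x):=t^{\alpha}\,\overline u(t^{\beta}x)$ on $S_N$; this function is still continuous at the vertex and belongs to $H^1(S_N)$. A change of variables on each half--line, together with the elementary identities $2\alpha-\beta=1$ and $2\alpha+\beta=p\alpha-\beta=2\beta+1$, gives $\|w\|_2^2=t^{2\alpha-\beta}\overline\mu=t\overline\mu=\mu$ (so $w\in H^1_\mu(S_N)$) and $E(w,S_N)=t^{2\beta+1}E(\overline u,S_N)$, whereas the pointwise term scales as $|w(0)|^q=t^{\alpha q}|\overline u(0)|^q$. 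Writing $E(\overline u,S_N)=F_{p,q}(\overline u)+\tfrac1q|\overline u(0)|^q$, bounding $F_{p,q}(\overline u)\le\Eps(\overline\mu)$, and using $t^{2\beta+1}\Eps(\overline\mu)=\Eps(\mu)$ (immediate from \eqref{E phi mu}), I would obtain
\[
F_{p,q}(w)\;\le\;\Eps(\mu)+\frac1q\,|\overline u(0)|^q\bigl(t^{2\beta+1}-t^{\alpha q}\bigr).
\]

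It then remains to check that the bracket is non--positive, and this is exactly where the comparison between $q$ and $\tfrac p2+1$ enters: a one--line computation shows $\alpha q=2\beta+1\iff q=\tfrac p2+1$, so $q<\tfrac p2+1$ forces $\alpha q<2\beta+1$ and $q>\tfrac p2+1$ forces $\alpha q>2\beta+1$. In item (i) one has $t=\mu/\overline\mu\le1$ and $\alpha q<2\beta+1$, hence $t^{2\beta+1}\le t^{\alpha q}$ by monotonicity of $a\mapsto t^{a}$; in item (ii) one has $t\ge1$ and $\alpha q>2\beta+1$, hence again $t^{2\beta+1}\le t^{\alpha q}$. In both regimes $F_{p,q}(w)\le\Eps(\mu)$, and Corollary~\ref{compactcor} yields a ground state at mass $\mu$.

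I do not expect a genuine difficulty: once the test function is chosen, everything is an explicit scaling identity followed by the comparison of two powers of $t$. The only point deserving care is the exponent bookkeeping ($2\alpha-\beta=1$, $2\alpha+\beta=p\alpha-\beta=2\beta+1$, and $\alpha q=2\beta+1\Leftrightarrow q=p/2+1$); conceptually, the content of the lemma is that the rescaling $\overline u\mapsto t^{\alpha}\overline u(t^{\beta}\cdot)$ makes the standard NLS energy homogeneous of the \emph{same} degree $2\beta+1$ as $\Eps$, so that the monotonicity of the existence set in $\mu$ is reduced to the sign of $2\beta+1-\alpha q$ — the competition between the two nonlinearities highlighted after Theorem~\ref{exoutdiag}.
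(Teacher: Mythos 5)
Your proof is correct: all the scaling identities check out ($2\alpha-\beta=1$, $2\alpha+\beta=p\alpha-\beta=2\beta+1$, $\alpha q=2\beta+1\Leftrightarrow q=\tfrac p2+1$), the rescaled function $w=t^\alpha\overline u(t^\beta\cdot)$ stays continuous at the vertex and has mass $\mu$, the bound $F_{p,q}(\overline u)=\F_{p,q}(\overline\mu)\le\Eps(\overline\mu)$ combined with $t^{2\beta+1}\Eps(\overline\mu)=\Eps(\mu)$ gives exactly the displayed inequality, and the sign of the bracket is handled correctly in both regimes (including the degenerate cases $t=1$ and $\overline u(0)=0$). Corollary~\ref{compactcor} then closes the argument. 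However, your route is genuinely different from the paper's. The paper first reduces, via Propositions~\ref{GSsym} and~\ref{prop_radial}, the existence question to the inequality $F_{p,q}(\eta_0^{\omega(\mu)})\le\Eps(\mu)$ for the radial stationary state, then studies the quotient $K(\mu)=\F^{\mathrm{rad}}_{p,q}(\mu)/\mu^{2\beta+1}$: it computes $(\F^{\mathrm{rad}}_{p,q})'(\mu)=-\tfrac12\omega(\mu)$ by one-sided variations and uses the stationary-state identity \eqref{Fpqstationary} to get the explicit formula $K'(\mu)=\tfrac{p+2-2q}{q(6-p)}\,|\eta_0(0)|^q\mu^{-(2\beta+2)}$, whose sign is governed by $q\lessgtr\tfrac p2+1$. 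Your scaling argument is more elementary and self-contained — it needs only the existence criterion and the trial-function inequality \eqref{F leq E}, not the uniqueness/radial characterization of ground states nor the differentiability of the ground-state energy. What the paper's computation buys in exchange is quantitative information (the exact derivative of the normalized energy along the radial branch and its strict sign), which is of independent interest; but for the statement of the lemma itself your argument is complete and arguably cleaner.
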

\begin{proof}
	Let us start by proving statement \emph{(i)}. Suppose then that $q<\f p2+1$ and assume by contradiction that there exists $\widetilde{\mu}<\overline{\mu}$ such that ground states of $F_{p,q}$ at mass $\widetilde{\mu}$ do not exist. 
	
	Recall that, by Proposition \ref{GSsym}, whenever ground states exist they coincide with the unique stationary state $\eta_0^{\omega(\mu)}$ such that $\|\eta_0^{\omega(\mu)}\|_2^2$ coincides with the prescribed mass, so that in this case $\F_{p,q}(\mu)=F_{p,q}\left(\eta_0^{\omega(\mu)}\right)$. Therefore, relying also on Proposition \ref{compactth} and Corollary \ref{compactcor}, we have that ground states of $F_{p,q}$ at mass $\mu$ exist if and only if
	\[
	F_{p,q}\left(\eta_0^{\omega(\mu)}\right)\leq\Eps(\mu)
	\]
	which by \eqref{E phi mu} may be rewritten as
	\[
	\f{\F_{p,q}^{\text{rad}}(\mu)}{\mu^{2\beta+1}}\leq-\theta_p\,,
	\]
	where, as defined in \eqref{phi mu}, $\beta=\f{p-2}{6-p}$ and $\F_{p,q}^{\text{rad}}(\mu)=F_{p,q}\left(\eta_0^{\omega(\mu)}\right)$ by Proposition \ref{prop_radial}.
	
	Set $K(\mu):=\f{\F_{p,q}^{\text{rad}}(\mu)}{\mu^{2\beta+1}}$ for every $\mu>0$. Since $\F_{p,q}^{\text{rad}}(\mu)$ is a differentiable function of $\mu$ by Proposition \ref{prop_radial} and formula \eqref{Fpqstationary}, it follows that $K'$ exists for every $\mu>0$ and it verifies
	\begin{equation}
	\label{K'}
	K'(\mu)=\f1{\mu^{2\beta+1}}\left(\left(\F_{p,q}^{\text{rad}}\right)'(\mu)-(2\beta+1)\f{\F_{p,q}^{\text{rad}}(\mu)}\mu\right)\,,
	\end{equation}
	where $\left(\F_{p,q}^{\text{rad}}\right)'$ denotes the derivative of $\F_{p,q}^{\text{rad}}$.
	
	Let us now prove statement \emph{(i)}. Assume $q<\f p2+1$. Letting $\eta_0^{\omega(\mu)}$ be the radial, monotonically decreasing ground state at mass $\mu$, we have
	\[
	\begin{split}
	\F_{p,q}^{\text{rad}}(\mu-\varepsilon)-&\F_{p,q}^{\text{rad}}(\mu)\leq F_{p,q}
	\left(\sqrt{\f{\mu-\varepsilon}\mu}\,\eta_0^{\omega(\mu)}\right)-F_{p,q}\left(\eta_0^{\omega(\mu)}\right)\\
	=&\f12\f\varepsilon\mu\left(-\|\left(\eta_0^{\omega(\mu)}\right)'\|_2^2+\|\eta_0^{\omega(\mu)}\|_p^p+|\eta_0^{\omega(\mu)}(0)|^q\right)+o(\varepsilon)=\f12\varepsilon\omega(\mu)+o(\varepsilon)\,,
	\end{split}
	\]
	where $\varepsilon>0$ is sufficiently small. Therefore
	\[
	\left(\F_{p,q}^{\text{rad}}\right)'(\mu^-)\geq-\f12\omega(\mu)\qquad\forall\mu>0\,,
	\]
	where $\left(\F_{p,q}^{\text{rad}}\right)'(\mu^-)$ denotes the left derivative of $\F_{p,q}^{\text{rad}}$ at $\mu$.
	
	Conversely, denoting by $\left(\F_{p,q}^{\text{rad}}\right)'(\mu^+)$ the right derivative of $\F_{p,q}^{\text{rad}}$ at $\mu$, the same argument leads to 
	\[
	\left(\F_{p,q}^{\text{rad}}\right)'(\mu^+)\leq-\f12\omega(\mu)\,,
	\]
	so that, since $\F_{p,q}^{\text{rad}}$ is differentiable at every $\mu>0$,
	\[
	\left(\F_{p,q}^{\text{rad}}\right)'(\mu)=-\f12\omega(\mu)\,.
	\]
	Coupling with \eqref{K'} and making use of the explicit expression of $F_{p,q}\left(\eta_0^{\omega(\mu)}\right)$ as in \eqref{Fpqstationary} yields
	\[
	K'(\mu)=\f{p+2-2q}{q(6-p)}\f{|\eta_0(0)|^q}{\mu^{2\beta+2}}>0
	\]
	for every $\mu>0$, and \emph{(i)} is proved.
	
	The proof of statement \emph{(ii)} is analogous.
\end{proof}

\begin{proof}[Proof of Theorem \ref{exoutdiag}]
Set
\[
\mu_{p,q}:=\begin{cases}
\sup\{\mu>0\,:\,\F_{p,q}(\mu)\leq\Eps(\mu)\} & \text{if }q<\f p2+1\\
\inf\{\mu>0\,:\,\F_{p,q}(\mu)\leq\Eps(\mu)\} & \text{if }q>\f p2+1\,.
\end{cases}
\]
By Proposition \ref{compactth}, Corollary \ref{compactcor} and Lemma \ref{lem_crit}, it follows that if $q<\f p2+1$, then ground states of $F_{p,q}$ at mass $\mu$ exist if and only if $\mu\leq\mu_{p,q}$, whereas if $q>\f p2+1$, then ground states $F_{p,q}$ at mass $\mu$ exist if and only if $\mu\geq\mu_{p,q}$. Furthermore, Proposition \ref{prop_radial} ensures that, whenever they exist, ground states at prescribed mass are also unique. Thus, to complete the proof of Theorem \ref{exoutdiag} it is enough to show that
\[
0<\mu_{p,q}<+\infty\,.
\]

The proof is divided in two steps.

\medskip
\textit{Step 1. Existence.}
Let $u=(u_i)_{i=1}^N\subset H^{1}_{\mu}(S_{N})$ be given by
\begin{equation*}
u_i(x)=Ae^{-Bx}\quad\quad\text{on}\,\HH_{i}\,,\, i=1,\dots,N.
\end{equation*}
Imposing the boundary condition in \eqref{EulLag}, we get that 
  \begin{equation}
  \label{vertcondexp}
  NB=A^{q-2}.
  \end{equation}
  Furthermore,
  \begin{equation}
  \label{muexp}
  \mu=\frac{NA^2}{2B}
  \end{equation}
   and
   \begin{equation}
   \label{Fpqexp}
   F_{p,q}(u)=\frac{N}{4}A^2B-\frac{N}{2p}\frac{A^p}{B}-\frac{1}{q}A^q.
   \end{equation} 
   Combining \eqref{vertcondexp}, \eqref{muexp} and \eqref{Fpqexp}, we get
   \begin{equation*}
   \label{energyexp}
   F_{p,q}(u)=-\left(\frac{1}{q}-\frac{1}{4}\right)\left(\frac{2}{N^2}\right)^{\frac{q}{4-q}}\mu^{\frac{q}{4-q}}-\frac{N^{2}}{p^{2}}\left(\frac{2}{N^2}\right)^{\frac{p-q+2}{4-q}}\mu^{\frac{p-q+2}{4-q}}.
   \end{equation*}
   Now, if $q<\frac{p}{2}+1$, then 
   \begin{equation*}
   \frac{q}{4-q}<\frac{2q}{6-p}<\frac{p+2}{6-p}=2\beta+1
   \end{equation*}
   so that, recalling \eqref{E phi mu},
   \begin{equation*}
   \F_{p,q}(\mu)\le F_{p,q}(u)<\Eps(\mu) \qquad\text{for }\mu\text{ small enough,}
   \end{equation*}
   which shows that $\mu_{p,q}>0$.
   
   On the contrary, if $q>\frac{p}{2}+1$, then
   \begin{equation*}
   \frac{q}{4-q}>\frac{2q}{6-p}>\frac{p+2}{6-p}=2\beta+1
   \end{equation*}
   and consequently
   \begin{equation*}
   \F_{p,q}(\mu)\le F_{p,q}(u)<\Eps(\mu) \qquad\text{for }\mu\text{ large enough},
   \end{equation*}
   i.e. $\mu_{p,q}<+\infty$.

\medskip
 \textit{Step 2. Non-existence.}
As Proposition \ref{GSsym} ensures that if a ground state of \eqref{Fpq1} at mass $\mu$ exists, then it coincides with $\eta_0^{\omega(\mu)}$, relations \eqref{ftJ} and \eqref{mass1} become respectively
\begin{equation}
\label{tN}
\frac{t}{(1-t^2)^{\frac{q-2}{p-2}}}=\frac{\left(\frac{p}{2}\right)^{\frac{q-2}{p-2}}\omega(\mu)^{\frac{2q-2-p}{2(p-2)}}}{N}
\end{equation}
and
\begin{equation}
\label{muN}
\mu=2N\frac{\left(\frac{p}{2}\right)^\frac{2}{p-2}\omega(\mu)^{\frac{6-p}{2(p-2)}}}{p-2}\int_{t}^1(1-s^2)^{\frac{4-p}{p-2}}\,ds.
\end{equation}
Assume now $q<\frac{p}{2}+1$. Since
\begin{equation*}
0\le\int_{t}^1(1-s^2)^{\frac{4-p}{p-2}}\le \int_{0}^1(1-s^2)^{\frac{4-p}{p-2}}<+\infty \quad\text{for}\quad 2<p<6,
\end{equation*}   
by \eqref{muN} we get $\omega(\mu)\to \infty$ as $\mu\to+\infty$, and consequently $t\rightarrow 0^{+}$ by \eqref{tN}. Hence it follows that
   \begin{equation*}
   \omega(\mu)\sim \mu^{\frac{2(p-2)}{6-p}}\quad\text{as}\quad \mu\to+\infty.
   \end{equation*}
Let then $m:=\nlc{\phi_{\omega(\mu)}}{2}{\R}$, where $\phi_{\omega(\mu)}$ is the soliton \eqref{soliton} associated to the Lagrange multiplier $\omega(\mu)$. Since $t\to0$ as $\mu\to+\infty$, recalling \eqref{mass phi w} shows that
    \begin{equation*}
    \lim_{\mu\to+\infty}\frac{\mu}{m}=\frac{N}{2}
    \end{equation*}
and combining with \eqref{phi mu} gives
\begin{equation}
	\label{Linf gs}
	\left|\eta_0^{\omega(\mu)}(0)\right|^q\leq\|\phi_{\omega(\mu)}\|_{L^\infty(\R)}^q=C_p m^{\f{2q}{6-p}}\sim\mu^{\f{2q}{6-p}}\qquad\text{as }\mu\to+\infty\,.
\end{equation}
Conversely, since almost every value in the range of $\eta_0^{\omega(\mu)}$ is attained $N$ times on $S_N$, by \cite[Lemma 2.1]{ASTbound}
    \begin{equation}
    \label{E eta0}
  E\left(\eta_0^{\omega(\mu)},S_N\right)\geq -\theta_p\left(\frac{2}{N}\right)^{\frac{2(p-2)}{6-p}} \mu^{\frac{p+2}{6-p}} 
    \end{equation} 
    for every $\mu>0$. 
    
    Combining with \eqref{Linf gs} and the fact that $q<\f p2+1$ entails
    \[
    F_{p,q}\left(\eta_0^{\omega(\mu)}\right)\geq-\theta_p\left(\f2N\right)^{\frac{2(p-2)}{6-p}} \mu^{\frac{p+2}{6-p}}-C\mu^{\f{2q}{6-p}}\sim -\theta_p\left(\f2N\right)^{\frac{2(p-2)}{6-p}} \mu^{\frac{p+2}{6-p}}>\Eps(\mu)
    \]
    for $\mu$ sufficiently large and $N\geq3$, i.e. $\mu_{p,q}<+\infty$.
 
	If on the contrary $q>\frac{p}{2}+1$, then a similar argument shows that
 \begin{equation*}
      \omega(\mu)\sim \mu^{\frac{2(p-2)}{6-p}}\quad\text{as}\quad \mu\to0^{+}
 \end{equation*}
 and
 \begin{equation*}
 \left|\eta_0^{\omega(\mu)}(0)\right|^{q}\leq C\mu^{\frac{2q}{6-p}}
 \end{equation*}
 for $\mu$ small enough.
 
 Coupling again with \eqref{E eta0} then leads to
 \[
 F_{p,q}\left(\eta_0^{\omega(\mu)}\right)>\Eps(\mu)
 \]
 provided $\mu$ is sufficiently small, yielding $\mu_{p,q}>0$ and we conclude.
   \end{proof}
   
In the final part of the section we prove the main result in the case $q=\f p2+1$.
   
    \begin{proof}[Proof of Theorem \ref{exdiag}]
    If $q=\frac{p}{2}+1$, then \eqref{tN} reduces to
\begin{equation}
\label{tdiag}
 t=\frac{\sqrt{p}}{\sqrt{p+2N^2}}
\end{equation}
and \eqref{muN} can be rewritten as
\begin{equation*}
\mu=2N\frac{\left(\frac{p}{2}\right)^\frac{2}{p-2}\omega^{\frac{6-p}{2(p-2)}}}{p-2}\I(t),
\end{equation*}
where 
\begin{equation*}
\I(x):=\int_x^1(1-s^2)^{\frac{4-p}{p-2}}\,ds\qquad\forall x\in[0,1].
\end{equation*}
Then
\begin{equation}
\label{wdiag}
\omega(\mu)=\left(\frac{(p-2)\mu}{2N\I(t)\left(\frac{p}{2}\right)^{\frac{2}{p-2}}}\right)^{\frac{2(p-2)}{6-p}}\,,
\end{equation}
and plugging \eqref{tdiag} and \eqref{wdiag} into \eqref{Fpqstationary} for $\eta_0^{\omega(\mu)}$ gives
\begin{equation*}
F_{p,q}\left(\eta_0^{\omega(\mu)}\right)=-\frac{2^{\frac{p-2}{6-p}}(6-p)}{(p+2)p^{\frac{4}{6-p}}(N\I(t))^{\frac{2(p-2)}{6-p}}}\mu^{\frac{p+2}{6-p}}\,,
\end{equation*}
whereas making use of \eqref{phi mu} one can rewrite $\Eps(\mu)$ as
\begin{equation*}
\Eps(\mu)=-\frac{2^{\frac{p-2}{6-p}}(6-p)}{(p+2)p^{\frac{4}{6-p}}}\left(\frac{p-2}{\I(0)}\right)^{\frac{2(p-2)}{6-p}}\mu^{\frac{p+2}{6-p}}.
\end{equation*}
By Proposition \ref{compactth}, Corollary \ref{compactcor} and Proposition \ref{prop_radial}, ground states of \eqref{Fpq1} at mass $\mu$ exist if and only if 
\[
F_{p,q}\left(\eta_0^{\omega(\mu)}\right)\leq\Eps(\mu)
\]
that, thanks to the previous expressions, can be reduced to
\begin{equation}
\label{condcrit}
N\frac{\I\left(\sqrt{\frac{p}{p+2N^2}}\right)}{\I(0)}\le 2\,.
\end{equation}
Note that the function $h:\R^{+}\to\R^{+}$ defined as
\begin{equation}
\label{defh}
h(x):=x\int_{\sqrt{\frac{p}{p+2x^2}}}^1(1-s^2)^{\frac{4-p}{p-2}}\,ds
\end{equation}
is non-decreasing on $\R^{+}$. Indeed, differentiating \eqref{defh}, we have
\begin{equation*}
h'(x):=\int_{\sqrt{\frac{p}{p+2x^2}}}^1(1-s^2)^{\frac{4-p}{p-2}}\,ds+\frac{2^{\frac{2}{p-2}}\sqrt{p}x^\frac{4}{p-2}}{(p+2x^2)^{\frac{p+2}{2(p-2)}}}>0\qquad\forall x>0\,.
\end{equation*} 
Therefore, observing that the left-hand side of \eqref{condcrit} is strictly less than $2$ for $N=2$, it is non-decreasing in $N$ and diverges to $+\infty$ as $N\to+\infty$, since $\I$ is continuous and bounded and $\f{p}{p+2N^2}\to0$, then there exists $N_p\ge2$ such that existence of ground states is guaranteed for $N\le N_p$, while non-existence holds for $N>N_p$.\\
\end{proof}

\begin{figure}[t]
	\centering
	\includegraphics[width=1\textwidth]{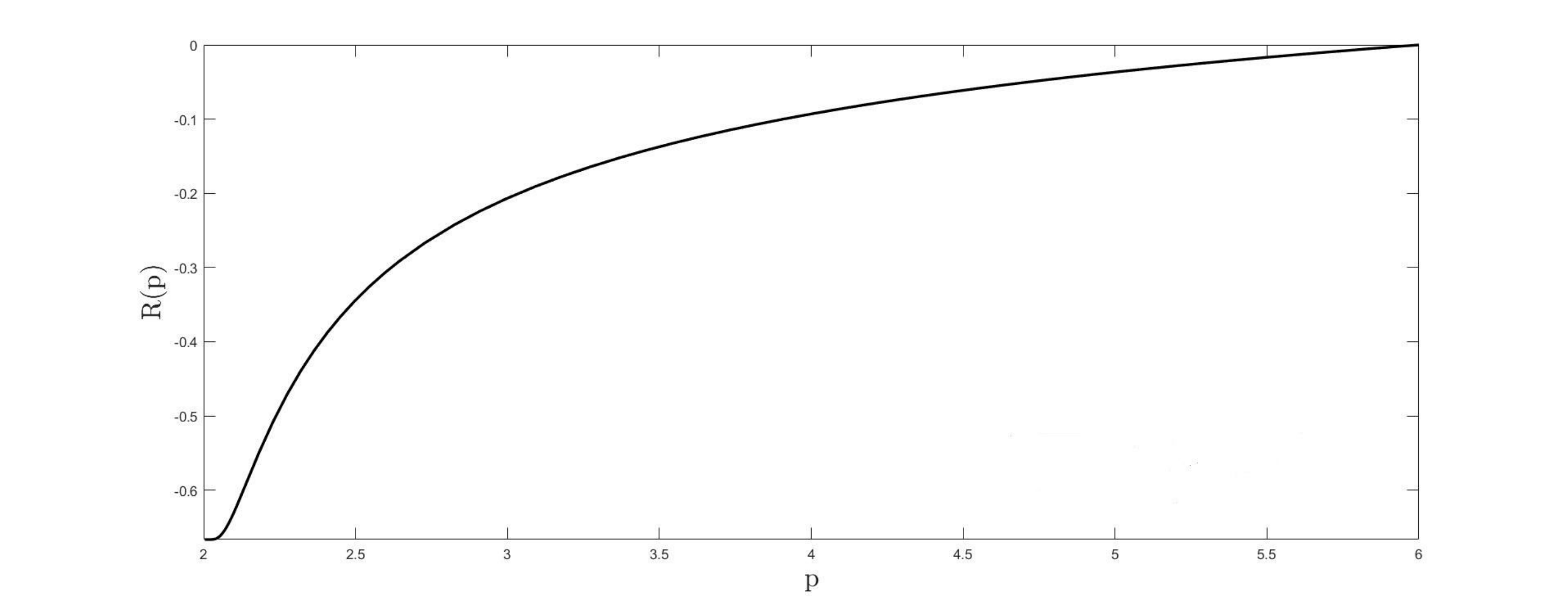}
	\caption{The graph of $R(p):=\f{\mathcal{I}\left(\sqrt{\f p{p+18}}\right)}{\mathcal{I}(0)}$ as a function of $p\in(2,6)$. The validity of \eqref{condcrit} at $N=3$ is equivalent to $R(p)\leq0$. These numerical simulations suggest that condition \eqref{condcrit} always holds at $N=3$, for every $p\in(2,6)$. }
	\label{fig_sim}
\end{figure}

As already pointed out in the Introduction and displayed clearly in the previous proof, without further assumption on $p\in(2,6)$, at the moment we can prove only that $N_p$ in Theorem \ref{exdiag} satisfies $N_p\geq2$ for every $p\in(2,6)$. However, numerical simulations (see Figure \ref{fig_sim}) strongly suggest that actually $N_p\geq3$ for every $p\in(2,6)$. To conclude, we thus provide the proof of Proposition \ref{prop_Np}.

\begin{proof}[Proof of Proposition \ref{prop_Np}]
	We split the proof in two parts.
	
	\smallskip\textit{Statement (i).} Fix $N>0$. We need to show that $N_p\geq N$ as soon as $p$ is sufficiently close to 2. To this end, we verify that, given $N$, condition \eqref{condcrit} is always satisfied when $p$ approaches $2$. 
	
	Note that if $p\in(2,4)$, then $(1-s^2)^{\f{4-p}{p-2}}$ is a decreasing function of $s\in[0,1]$. Hence, 
	\[
	\mathcal{I}\left(\sqrt{\f p{p+2N^2}}\right)=\int_{\sqrt{\f p{p+2N^2}}}^1 (1-s^2)^{\f{4-p}{p-2}}\,ds\leq\left(\f{2N^2}{p+2N^2}\right)^{\f{4-p}{p-2}}\left(1-\sqrt{\f p{p+2N^2}}\right)\,.
	\]
	Conversely, 
	\[
	\mathcal{I}(0)=\int_0^1(1-s^2)^{\f{4-p}{p-2}}\,ds=\int_0^1(1+s)^{\f{4-p}{p-2}}(1-s)^{\f{4-p}{p-2}}\,ds\geq\int_0^1(1-s)^{\f{4-p}{p-2}}\,ds=\f{p-2}2\,.
	\]
	Therefore we get
	\[
	N\frac{\I\left(\sqrt{\frac{p}{p+2N^2}}\right)}{\I(0)}\leq\f {2N}{p-2}\left(\f{2N^2}{p+2N^2}\right)^{\f{4-p}{p-2}}\left(1-\sqrt{\f p{p+2N^2}}\right)\to0\qquad\text{as }p\to2^+\,,
	\]
	so that \eqref{condcrit} holds true as soon as $p$ is close enough to 2. This shows that there exists $\delta=\delta(N)$ so that for every $p\in(2,2+\delta)$ we have $N_p\geq N$, i.e. Proposition \ref{prop_Np}\emph{(i)} is proved.
	
	\smallskip\textit{Statement (ii).} Let $p\in[4,6)$. We begin by showing that $N_p<5$, proving that \eqref{condcrit} fails whenever $p\in[4,6)$ and $N\geq5$.
	
	Note that assuming $p\geq4$ implies that $(1-s^2)^{\f{4-p}{p-2}}$ is increasing as a function of $s$ and that $(1-s^2)^{\f{4-p}{p-2}}\geq1$ on $[0,1]$. Henceforth, we get
	\[
	\mathcal{I}\left(\sqrt{\f p{p+2N^2}}\right)\geq1-\sqrt{\f{p}{p+2N^2}}\,.
	\]
	Moreover, fixing $s\in[0,1)$ and regarding $(1-s^2)^{\f{4-p}{p-2}}$ as a function of $p\in[4,6)$, we have
	\[
	\f{d}{dp}(1-s^2)^{\f{4-p}{p-2}}=-(1-s^2)^{\f{4-p}{p-2}}\ln(1-s^2)\f{2}{(p-2)^2}>0\,,
	\]
	i.e. for every given $s\in(0,1)$, $(1-s^2)^{\f{4-p}{p-2}}$ is an increasing function of $p$. Therefore, for every $s\in[0,1)$ and $p\in[4,6)$
	\[
	(1-s^2)^{\f{4-p}{p-2}}\leq(1-s^2)^{-\f12}\,,
	\]
	so that integrating over $[0,1]$ gives
	\[
	\mathcal{I}(0)\leq\int_0^1(1-s^2)^{-\f12}\,ds=\f\pi2\,.
	\]
	We thus obtain
	\[
	N\f{\mathcal{I}\left(\sqrt{\f{p}{p+2N^2}}\right)}{\mathcal{I}(0)}\geq\f{2N}\pi\left(1-\sqrt{\f{p}{p+2N^2}}\right)\geq\f{2N}\pi\left(1-\sqrt{\f{3}{3+N^2}}\right)\,,
	\]
	where the last inequality follows from the fact that, for every given $N$, $\sqrt{\f{p}{p+2N^2}}$ is an increasing function of $p$. 
	
	In view of \eqref{condcrit}, to prove that $N_p\in\{2,3,4\}$ for every $p\in[4,6)$, it is then enough to show that, for every $N\geq5$
	\[
	\f{2N}\pi\left(1-\sqrt{\f{3}{3+N^2}}\right)>2\,.
	\]
	that can be equivalently rewritten as
	\begin{equation}
	\label{eq:G(N)}
	G(N):=N^4-2\pi N^3+\pi^2 N^2-6\pi N+3\pi^2>0\,.
	\end{equation}
	To prove that \eqref{eq:G(N)} holds for every $N\geq5$, we will show that it is true when $N=5$ and that $G(N)$ is increasing function of $N$ on $[5,+\infty)$.
	
	On the one hand, direct calculations immediately show that $G(5)>0$. On the other hand, differentiating \eqref{eq:G(N)} with respect to $N$ gives
	\[
	G'(N)=4N^3-6\pi N^2+2\pi^2 N-6\pi\,,
	\]
	and again we directly see that $G'(5)>0$. A further differentiation leads to
	\[
	G''(N)=12N^2-12\pi N+2\pi^2\,,
	\]
	yielding
	\[
	G''(N)>0\qquad\text{for every }N\geq\f{3+\sqrt{3}}{6}\pi\,.
	\]
	Since $5>\f{3+\sqrt{3}}{6}\pi$ and $G'(5)>0$, this implies that $G'(N)\geq0$ for every $N\geq5$, and coupling with $G(5)>0$ this ensures that $G(N)\geq0$ for every $N\geq5$. This concludes the proof of the first part of Proposition \ref{prop_Np}\emph{(ii)}.
	
	We are then left to show that there exists $\delta>0$ such that, if $p\in(4-\delta,4+\delta)\cup(6-\delta,6)$, then $N_p=3$. 
	
	We start by proving the results for $p$ in a suitable neighbourhood of 4. Note that, for every given $N$, we have
	\[
	N\f{\mathcal{I}\left(\sqrt{\f 4{4+2N^2}}\right)}{\mathcal{I}(0)}=N\f{\int_{\sqrt{\f 4{4+2N^2}}}^1\,ds}{\int_0^1\,ds}=N\left(1-\sqrt{\f 2{2+N^2}}\right)\,.
	\]
	Hence, evaluating the previous expression at $N=3$ and $N=4$ respectively, we obtain
	\[
	3\left(1-\sqrt{\f2 {11}}\right)<2\,,\qquad4\left(1-\sqrt{\f 2{18}}\right)=\f 83>2\,,
	\]
	so that condition \eqref{condcrit} is satisfied at $p=4,\,N=3$, whereas it fails at $p=4,\,N=4$, in turn implying $N_4=3$. Since both previous inequalities are strict, by continuity with respect to $p$, we conclude that $N_p=3$ for every $p\in(4-\delta,4+\delta)$, for some $\delta>0$.
	
	Let us now concentrate on the case $p\in(6-\delta,6)$. When $p=6$, evaluating the left hand side of \eqref{condcrit} at $N=3$ and $N=4$ gives
	\[
	\begin{split}
	3\f{\int_{\f12}^1(1-s^2)^{-\f12}\,ds}{\int_0^1(1-s^2)^{-\f12}\,ds}=&3\f{\f\pi2-\f\pi6}{\f\pi2}=2\,,\\
	4\f{\int_{\sqrt{\f3{19}}}^1(1-s^2)^{-\f12}\,ds}{\int_0^1(1-s^2)^{-\f12}\,ds}=&4\f{\f\pi 2-\arcsin\left(\sqrt{\f3{19}}\right)}{\f\pi2}>2\,.
	\end{split}
	\]
	On the one hand, the second inequality being strict shows that \eqref{condcrit} is violated when $N=4$ and $p\in(6-\delta,6)$, for suitable $\delta>0$, so that for all these exponents it must be $N_p\leq3$. 
	
	On the other hand, the first line of the previous equation shows that \eqref{condcrit} becomes an equality when $p=6$ and $N=3$. Therefore, to show that $N_p=3$, we need to further analyse the behaviour of the left hand side of \eqref{condcrit} when $N=3$ and $p$ approaches 6. To do this, fix $N=3$ and set
	\begin{equation}
	\label{R(p)}
	R(p):=\f{\mathcal{I}\left(\sqrt{\f p{p+18}}\right)}{\mathcal{I}(0)}\,.
	\end{equation}
	We will conclude the proof by showing that $R'(6)>0$. By continuity, this eventually guarantees the existence of $\delta>0$ so that $N_p=3$ for every $p\in(6-\delta,6)$.
	
 	Differentiating \eqref{R(p)} with respect to $p$ we obtain
	\begin{equation}
	\label{R'(6)}
	\begin{split}
	R'(p)=&\f{\left(-\f{9\sqrt{p+18}}{\sqrt{p}(p+18)^2}\left(\f{18}{p+18}\right)^{\f{4-p}{p-2}}-\f{2}{(p-2)^2}
	\int_{\sqrt{\f p{p+18}}}^{1}(1-s^2)^{\f{4-p}{p-2}}\ln(1-s^2)\,ds\right)\int_0^1(1-s^2)^{\f{4-p}{p-2}}}{\left(\int_0^1(1-s^2)^{\f{4-p}{p-2}}\right)^2}\\
	+&\f{\f2{(p-2)^2}\int_{\sqrt{\f{p}{p+18}}}^1(1-s^2)^{\f{4-p}{p-2}}\,ds\int_0^1(1-s^2)^{\f{4-p}{p-2}}\ln(1-s^2)\,ds}{\left(\int_0^1(1-s^2)^{\f{4-p}{p-2}}\right)^2}\,.
	\end{split}
	\end{equation}
	According to the previous expression, the numerator of $R'(6)$ reads
	\[
	\left(-\f{1}{16\sqrt{3}}-\f1 8\int_{\f12}^{1}(1-s^2)^{-\f12}\ln(1-s^2)\,ds\right)\f\pi2+\f18\left(\f\pi2-\f\pi6\right)\int_0^1(1-s^2)^{-\f12}\ln(1-s^2)\,ds\,,
	\]
	that can be rewritten as
	\begin{equation}
	\label{numR'(6)}
	\f{\pi}{16}\left(-\f13\int_0^1(1-s^2)^{-\f12}\ln(1-s^2)\,ds-\f1{2\sqrt{3}}+\int_0^{\f12}(1-s^2)^{-\f12}\ln(1-s^2)\,ds\right)\,.
	\end{equation}
	The first integral in the above bracket can be computed explicitly making use of polylogarithmic functions
	\begin{equation}
	\label{piln2}
	\int_0^1(1-s^2)^{-\f12}\ln(1-s^2)\,ds=-\pi\ln2\,.
	\end{equation}
	Furthermore, since $(1-s^2)^{-\f12}\ln(1-s^2)$ is a decreasing function of $s$ on $\left[0,\f12\right]$, we have
	\[
	\int_0^{\f12}(1-s^2)^{-\f12}\ln(1-s^2)\,ds\geq\int_0^{\f12}\f2{\sqrt{3}}\ln\left(\f34\right)\,ds=\f1{\sqrt{3}}\ln\left(\f34\right)\,.
	\]
	As it holds 
	\[
	\f{\pi\ln2}3>\f1{2\sqrt{3}}-\f1{\sqrt{3}}\ln\left(\f34\right)\,,
	\]
	combining with \eqref{R'(6)},\eqref{numR'(6)} and \eqref{piln2} we have $R'(6)>0$, and the proof is complete.
\end{proof}

\section{Proof of Proposition \ref{prop_stab}}
\label{sec:stab}
In this section we prove the orbital stability of the radial stationary state $\eta_0^{\omega(\mu)}$, even for values of the mass for which there is no ground state. The proof of Proposition \ref{prop_stab} strongly relies on a method introduced in \cite{acfn_jde}, which is essentially based on the reduction of an  infinite-dimensional problem to a finite-dimensional one. The argument of \cite{acfn_jde} extends almost straightforwardly to our setting. The idea of the method is the following. We map continuously every function in the mass constrained space into another function made of pieces of solitons, whose energy is lower than the one of the original function. Then, we prove that the radial stationary states are local minimum points for the energy among functions with the same mass and made of pieces of solitons. Thus, a fortiori, they are local minima in the whole mass constrained energy space, and therefore, due to the general stability theory, they are orbitally stable. Here we limit ourselves to sketch the main steps of the proof, explicitly pointing out the minor differences with respect to \cite{acfn_jde} whenever occurring.

\begin{proof}[Proof of Proposition \ref{prop_stab}]
	Owing to Theorem 3 in \cite{GSS}, the orbital stability of $\eta_0^{\omega(\mu)}$ is equivalent to its local minimality for $F_{p,q}$ in $\HmuS$, hence we prove that $\eta_0^{\omega(\mu)}$ is a local minimum for the energy in $\HmuS$.
	
	Following \cite[Definition 2.1]{acfn_jde}, we fix $\mu>0$ and define the multi-soliton manifold $\M$ as the subspace of $\HmuS$ made of all the functions whose restriction to each half-line $\mathcal{H}_{j}$ of $S_{N}$ gives a piece of soliton, i.e.
	\[
	\M:=\left\{u\in\HmuS\,:\,u_j=\phi_{\omega_j}(\cdot+a_j),\,\text{ for some }\omega_j,\,a_j\in\R,\,j=1,\,\dots,\,N\right\}\,.
	\]
	Given a function $\eta\in \HmuS$ such that $\eta(0)\neq 0$, we define the multi-soliton transformation of $\eta$ as the unique function $\Sigma\eta\in\M$ so that the restriction $(\Sigma\eta)_j$ of $\Sigma\eta$ to the half--line $\HH_{j}$ satisfies
	\begin{equation*}
	(\Sigma \eta)_{j}:=\phi_{\omega(m_{j}, h)}(\cdot+a(m_{j}, h)),
	\end{equation*}
	where 
	\[
	m_{j}:=\int_{\mathcal{H}_{j}} \eta_{j}^2\, dx_{j},\,\,j=1,\dots,N,\quad\text{ and }\quad h=|\eta(0)|,
	\]
	and $\phi_{\omega(m_{j}, h)}(\cdot+a(m_{j}, h))$ is the unique piece of soliton with mass $m_{j}$ and $\phi_{\omega(m_{j}, h)}(a(m_{j}, h))=h$. For every given values of $m_j,h$, the uniqueness of such $\phi_{\omega(m_{j}, h)}(\cdot+a(m_{j}, h))$ has been proved in \cite[Theorem 4.1]{ASTcpde}. Furthermore, by \cite[Remark 3.4]{acfn_jde} we have for every $\eta$ so that $\eta(0)\neq0$
	\[
	F_{p,q}(\Sigma\eta)\leq F_{p,q}(\eta)\,,
	\]
	and equality holds if and only if $\eta\in\M$, that is $\Sigma\eta\equiv\eta$. In light of this and of the continuity of $\Sigma$ \cite[Proposition 3.2]{acfn_jde}, to show that $\eta_0^{\omega(\mu)}$ is a local minimizer of $F_{p,q}$ in $\HmuS$, it is enough to prove that it locally minimizes the energy in $\M$. Note that $\eta_0^{\omega(\mu)}\in\M$ for every $\mu>0$.
	
	We observe that any given function $\varphi\in \M$ corresponds to a point $P=(m_{1},\dots,m_{N-1},h)\in (0,+\infty)^{N}$, where $m_{j}$ is the mass of the restriction $\varphi_j$ of $\varphi$ to $\mathcal{H}_{j}$, $j=1,\,\dots,\,N-1$, and $h=|\varphi(0)|$. Therefore, it is natural to define the reduced energy function $r:(0,+\infty)^{N}\to \R$ as
	\begin{equation*}
	r(P):=F_{p,q}(\varphi), 
	\end{equation*}
	which can be conveniently decomposed as follows
	\begin{equation*}
	r(P)=\sum_{i=1}^{N-1}e(m_{i},h)+ e\left(\mu-\sum_{i=1}^{N-1}m_{i},h\right),
	\end{equation*}
	where $e:(0,+\infty)\times \R^{+}\to \R$ is given by
	\begin{equation*}
	e(m,h):=\f{1}{2}\|\phi'_{\omega(m, h)}(\cdot+a(m,h))\|^{2}_{L^{2}(\R^{+})}-\f{1}{p}\|\phi_{\omega(m, h)}(\cdot+a(m,h))\|^{p}_{L^{p}(\R^{+})}-\f{1}{qN}h^{q}.
	\end{equation*}
	Thus, setting  $\bar{h}=|\eta_0^{\omega(\mu)}(0)|$, the local minimality of $\eta_0^{\omega(\mu)}$ in $\M$ is equivalent to the local minimality  for $r$ of the point $\overline{P}=\left(\f{\mu}{N},\dots, \f{\mu}{N},\bar{h}\right)$.
	
	Since $\overline{P}$ is an internal point of $(0,+\infty)^{N}$ and it is a stationary point for $r$ as $\eta_0^{\omega(\mu)}$ is a critical point for $F_{p,q}$, to conclude it is then sufficient to prove that the Hessian matrix of $r$ evaluated at $\overline{P}$ is positive definite. By straightforward computations,
	\begin{equation*}
	\begin{split}
	\f{\partial^{2} r}{\partial m_{i}\partial m_{j}}&(\overline{P})=(1+\delta_{ij})\f{\partial^{2} e}{\partial m^{2}}\left(\f{\mu}{N},\bar{h}\right),\\
	\f{\partial^{2} r}{\partial h^{2}}&(\overline{P})=N\f{\partial^{2} e}{\partial h^{2}}\left(\f{\mu}{N},\bar{h}\right),\\
	\f{\partial^{2} r}{\partial m_{i}\partial h}&(\overline{P})=0\,,
	\end{split}
	\end{equation*}
	where $\delta_{ij}$	denotes as usual the Kronecker's symbol of $i, j$. By elementary linear algebra, one easily sees that the Hessian matrix has three eigenvalues: $N\f{\partial^{2} e}{\partial m^{2}}\left(\f{\mu}{N},\bar{h}\right)$ with multiplicity $1$, $\f{\partial^{2} e}{\partial m^{2}}\left(\f{\mu}{N},\bar{h}\right)$ with multiplicity $N-2$ and $\f{\partial^{2} e}{\partial h^{2}}\left(\f{\mu}{N},\bar{h}\right)$ with multiplicity $1$. Therefore, to show that the Hessian matrix is positive definite, we need to prove that
	\begin{equation}
	\label{dFdm}
	\f{\partial^{2} e}{\partial m^{2}}\left(\f{\mu}{N},\bar{h}\right)>0
	\end{equation}
	and
	\begin{equation}
	\label{dFdh}
	\f{\partial^{2} e}{\partial h^{2}}\left(\f{\mu}{N},\bar{h}\right)>0.
	\end{equation}
	The proof of \eqref{dFdm}--\eqref{dFdh} is analogous to the one of inequalities (4.2)--(4.3) of \cite{acfn_jde}. The main idea is to consider the variations
	\begin{equation*}
	f_1(t):=r(\underline{m}(t),\bar{h})\quad\text{ and }\quad f_2(t):=r\left(\f\mu N,\,\dots,\f\mu N,\bar{h}+t\right),\quad t \in (-\varepsilon,\varepsilon)
	\end{equation*}
	where $\underline{m}(t)=\left(\f{\mu}{N}+t,\f{\mu}{N},\dots,\f{\mu}{N}\right)$. 
	
	It is plainly seen that
	\[
	f_1''(0)=2\f{\partial^{2} e}{\partial m^{2}}\left(\f{\mu}{N},\bar{h}\right)\quad\text{ and }\quad f_2''(0)=N\f{\partial^{2} e}{\partial h^{2}}\left(\f{\mu}{N},\bar{h}\right).
	\]
	Note that $f_1(t)$ corresponds to an exchange of mass $t$ between the first and the $N$--th half--lines, without involving the remaining $N-2$ ones. Hence, 
	\[
	f_1(t)-f_1(0)=f_1(t)-r\left(\overline{P}\right)=\widetilde{F}_{p,q}\left(\varphi_t,\R\right)-\widetilde{F}_{p,q}\left(\varphi_0,\R\right)
	\]
	where $\varphi_t,\varphi_0\in H^1_{\f{2\mu}N}(\R)$ denote respectively the restriction to the line $\HH_{1}\cup\HH_{N}$ of the function $\eta_t\in\M$ corresponding to the point $(\underline{m}(t),\bar{h})$ and of the stationary state $\eta_0^{\omega(\mu)}$, and $\widetilde{F}_{p,q}:H_{\f{2\mu}N}^1(\R)\to\R$ is given by
	\[
	\widetilde{F}\left(u,\R\right)=\f{1}{2}\|u'\|^{2}_{L^{2}(\R)}-\f{1}{p}\|u\|^{p}_{L^{p}(\R)}-\f{2}{qN}|u(0)|^{q}\,.
	\]
	Here is the point where we need to argue slightly differently with respect to \cite{acfn_jde}. Indeed, we now rely on the results of \cite{BD}, which guarantees that $\varphi_0$ as above is a global minimizer of $\widetilde{F}_{p,q}$ in $H_{\f{2\mu}N}^1(\R)$. Coupling with the stability result in \cite[Theorem 3.4]{GSS}, this immediately yields
	\[
	f_1(t)-f_1(0)\geq C\|\varphi_t-\varphi_0\|_{L^2(\R)}^2
	\]
	for some constant $C>0$ and $t$ small enough. Moving from the previous inequality, and repeating the same calculations as in \cite[pp. 7411]{acfn_jde}, we eventually obtain
	\[
	f_1(t)-f_1(0)\geq C t^2\,,
	\]
	that coupled with $f_1'(0)=0$ ensures $f_1''(0)\geq C>0$. This proves \eqref{dFdm}. Since the same argument developed for $f_2$ leads to \eqref{dFdh}, we conclude.
\end{proof}

\section*{Acknowledgements}
The first and the second authors acknowledge that the present research has been partially supported by MIUR grant Dipartimenti di Eccellenza 2018-2022 (E11G18000350001). All the authors wish to thank Enrico Serra and Paolo Tilli for fruitful discussions and suggestions.

\end{document}